\documentclass[
final
]{dmtcs-episciences}


\usepackage[utf8]{inputenc}
\usepackage{subfigure}
\usepackage{amssymb,amsmath,amsthm}
\usepackage{tikz}
\usetikzlibrary{patterns,snakes}
\usepackage{tabu}

\numberwithin{equation}{section}
\newtheorem{theorem}{Theorem}[section]
\newtheorem{conjecture}{Conjecture}[section]
\newtheorem{lemma}{Lemma}[section]
\newtheorem{corollary}{Corollary}[section]

\theoremstyle{definition}
\newtheorem{remark}{Remark}[section]

\newtheorem*{definition}{Definition}

\newtheorem{example}[theorem]{Example}

\newcommand{\cupdot}{\mathbin{\mathaccent\cdot\cup}}
\newcommand{\makeset}[2]{ \{#1\,:\, #2\} }
\DeclareMathOperator{\slide}{slide}
\DeclareMathOperator{\comp}{Comp}
\DeclareMathOperator{\rgf}{RGF}

%

\usepackage[round]{natbib}

\author{Jonathan Bloom\affiliationmark{1}
  \and Dan Saracino\affiliationmark{2}}
\title[Pattern avoidance for set partitions \`a la Klazar]{Pattern avoidance for set partitions \`a la Klazar}
\affiliation{
  Lafayette College, Easton, PA, USA\\
  Colgate University, Hamilton, NY, USA}
\keywords{set partitions, Klazar, Wilf-equivalence, restricted growth functions}

\received{2015-11-3}
\revised{2015-3-29}
\accepted{2016-7-26}
\publicationdetails{18}{2016}{2}{9}{1327}

\begin{document}
\publicationdetails{18}{2016}{2}{9}{1327}
\maketitle
\begin{abstract}
 In 2000 Klazar introduced a new notion of pattern avoidance in the context of set partitions of $[n]=\{1,\ldots, n\}$.  The purpose of the present paper is to undertake a study of the concept of Wilf-equivalence based on Klazar's notion.  We determine all Wilf-equivalences for partitions with exactly two blocks, one of which is a singleton block, and we conjecture that, for $n\geq 4$, these are all the Wilf-equivalences except for those arising from complementation.   If $\tau$ is a partition of $[k]$ and $\Pi_n(\tau)$ denotes the set of all partitions of $[n]$ that avoid $\tau$, we establish inequalities between $|\Pi_n(\tau_1)|$ and $|\Pi_n(\tau_2)|$ for several choices of $\tau_1$ and $\tau_2$, and we prove that if $\tau_2$ is the partition of $[k]$ with only one block, then $|\Pi_n(\tau_1)| <|\Pi_n(\tau_2)|$ for all $n>k$ and all partitions $\tau_1$ of $[k]$ with exactly two blocks.  We conjecture that this result holds for all partitions $\tau_1$ of $[k]$.  Finally, we enumerate $\Pi_n(\tau)$ for all partitions $\tau$ of $[4]$. 
\end{abstract}

\section{Introduction}
A \emph{set partition} of a set $S$ is a collection of disjoint nonempty subsets $B_1,\ldots, B_m$ of $S$ whose union is $S$.  We call the subsets $B_i$ \emph{blocks}, and we write 
$$\sigma = B_1/\cdots/ B_m \vdash S.$$
If $S$ is a set of positive integers and $\sigma\vdash S$, then the \emph{standardization} of $\sigma$ is the set partition of $\{1,2,\ldots, |S|\}$ obtained by replacing the smallest element of $S$ by $1$, the second smallest element of $S$ by $2$, and so on.  

If $n$ is a positive integer we let $[n] = \{1,2,\ldots,n\}$ and define
$$\Pi_n = \makeset{\sigma}{\sigma\vdash[n]}.$$
The concept of pattern avoidance for set partitions was introduced by Klazar in~\cite{Klazar:Count2000}.  If $k\leq n$ and we have $\sigma\vdash[n]$ and $\tau\vdash[k],$ then we say $\sigma$ \emph{contains $\tau$ as a pattern} if there is a subset $S$ of $[n]$ such that the standardization of the restriction of $\sigma$ to $S$ is $\tau$.  If $\sigma$ does not contain $\tau$, we say that $\sigma$ \emph{avoids} $\tau$.  We let 
$$\Pi_n(\tau) = \makeset{\sigma\in \Pi_n}{\sigma\mbox{ avoids } \tau}.$$

The present paper is a contribution to the study of Klazar's definition of pattern avoidance, but we should mention right away that there are other definitions.  One of these arises from the well-known correspondence  between set partitions and restricted growth functions.  Recall that a \emph{restricted growth function (RGF)}  is a word $a_1a_2\cdots a_\ell$ of positive integers such that $a_1 = 1$ and, for $i\geq 2$, we have $a_i\leq 1+ \max\{a_1,\ldots, a_{i-1}\}.$  The integer $\ell$ is called the \emph{length} of the RGF.  If $\sigma\in \Pi_n$, we define a corresponding $RGF$ of length $n$, denoted by $\rgf(\sigma)$, as follows.  First write 
$\sigma = B_1/B_2/\cdots /B_m$, where 
$$\min B_1 < \min B_2<\cdots < \min B_m.$$
(This is called the \emph{standard form of $\sigma$}.)  Then let $\rgf(\sigma) = a_1\cdots a_n$, where $i\in B_{a_i}$.  We can obtain an alternative notion of pattern avoidance for set partitions by using a natural notion of avoidance for RGF's. If $k\leq n$ and $\textbf{a}=a_1\cdots a_n$ and $\textbf{b}=b_1\cdots  b_k$ are RGFs, we say that $\textbf{a}$ \emph{contains} $\textbf{b}$ if $\textbf{a}$ has a subsequence whose standardization is $\textbf{b}$, otherwise we say that $\textbf{a}$ \emph{avoids} $\textbf{b}$.  This notion of RGF avoidance has been studied extensively (see Mansour's comprehensive book~\cite{Mansour:Combi2013}).  It does not coincide with Klazar's notion.  For if $\sigma$ avoids $\tau$ in Klazar's sense, then $RGF(\sigma)$ avoids $RGF(\tau)$; but the converse may fail.  For example $145/23$ contains $12/34$, yet $\rgf(145/23) = 12211$ avoids $\rgf(12/34) = 1122$.  

There is yet a third notion of pattern avoidance for set partitions that involves arc-diagrams (See~\cite{BloomElizalde:Pattern-2013,ChenDeng:Crossing07,RiordanThe-dist1975,TouchardSur-un-p1952}).  A well studied notion in this context is that of non-nesting and non-crossing set partitions which arise from the avoidance of certain arc-configurations.  We wish to point out that under Klazar's definition non-crossing set partitions are those that omit $13/24$.  Interestingly, there is no single pattern $\sigma$ such that the non-nesting partitions are precisely those that omit $\sigma$, in  Klazar's sense. To see this, observe that the only candidate for  $\sigma$ is $14/23$ since the non-nesting set partitions of length $4$ are $\Pi_4 \setminus \{14/23\}$.  Yet, the set partition $135/24$ is non-nesting but contains $14/23$.  

Klazar's definition has not been as well studied as the RGF definition.  The earliest paper is of course Klazar's~\cite{Klazar:Count2000}, and the most recent paper of which we are aware is the paper \cite{Dahlberg:Set-p2016} by Dahlberg, et al.  We also mention Sagan's paper~\cite{Sagan:2010aa}, which contains, along with many other results, an enumeration of $\Pi_n(\tau)$ for all $\tau\vdash[3]$.  
In Section~\ref{sec:enum} of our paper we enumerate $\Pi_n(\tau)$ for all $\tau \vdash[4]$, although our main purpose is to undertake a study of Wilf-equivalence in the context of Klazar's definition of avoidance.  If $\tau,\pi\vdash[k]$, we say that $\tau$ and $\pi$ are \emph{Wilf-equivalent}, and we write $\tau\sim \pi$, if $|\Pi_n(\tau)| = |\Pi_n(\pi)|$ for all $n>k$.  The fact that $1/2/3\sim 13/2$ is established in~\cite{Sagan:2010aa}.  In Section~\ref{sec:wilf} of our paper we establish the new Wilf-equivalences
$$12\cdots a-1,a+1\cdots k/a \sim 12\cdots b-1,b+1\cdots k/b$$
for all $k\geq 4$ and $1<a,b<k$, and based on computer evidence, we conjecture (Conjecture~\ref{conj:wilf}) that these,  $1/2/3\sim 13/2$, and $\tau\sim \tau^c$ are the only Wilf-equivalences, where $\tau^c$ is called the \emph{complement of $\tau$} and is obtained from $\tau$ by subtracting each number from $k+1$.  (There are similar conjectures for Wilf-equivalences in other contexts, all of which seem quite difficult to prove. See~\cite{AlbertBouvel:A-genera2014,KitaevLieseRemmelSagan:Rational2009}.) We also show that 
$$|\Pi_n(2\cdots k/1)| < |\Pi_n(13\cdots k/2)| \mbox{ and } |\Pi_n(1\cdots k-1/k)| < |\Pi_n(13\cdots k/2)|$$
for all $n\geq 2k-2$ but that these inequalities may become equalities when $n< 2k-2$.  For example, this occurs when $k=5$ and $n= 6,7$.  

Motivated by the results indicated in the last sentence, we introduce a partial ordering $\prec$ of $\Pi_k$, as follows.  For $\tau, \pi\in \Pi_k$ we write $\tau\prec \pi$ if $|\Pi_n(\tau)| \leq |\Pi_n(\pi)|$ for all $n>k$, and there exists some $m\geq k$ such that $|\Pi_n(\tau)| < |\Pi_n(\pi)|$ for all $n>m$.  

If $\beta_k$ denotes the partition of $[k]$ that has only one block, then computer evidence suggests that, for every $k\geq 4$ and every $\tau\neq \beta_k$ in $\Pi_k$ we have $\tau\prec \beta_k$, and in fact $|\Pi_n(\tau)|< |\Pi_n(\beta_k)|$ for all $n>k$.  In Section~\ref{sec:prec} of our paper we prove this result for all $\tau$ with exactly two blocks, and we conjecture (Conjecture~\ref{conj:order}) that the result holds for all $\tau\neq \beta_k$.  We also consider partitions $\pi$ such that there is exactly one doubleton block and all other blocks are singletons.  We prove that for every such $\pi$, if $\sigma_k$ denotes the partition all of whose blocks are singletons, then 
$$|\Pi_n(\pi)|<|\Pi_n(\sigma_k)|< |\Pi_n(\beta_k)|$$ 
for all $n>k$. This result should be compared with Klazar's result in~\cite{Klazar:Count2000} that for all such $\pi$ and $\sigma_k$ the generating function of $|\Pi_n(\pi)|$ is rational (whereas the generating function for $|\Pi_n(\beta_k)|$ is not).

\section{Wilf-equivalence}\label{sec:wilf}
For a fixed $k$, we first determine all Wilf-equivalences among  the patterns
	$$\beta_{k,a} = 1\cdots (a-1) (a+1)\cdots k/ a,$$
where  $1\leq a \leq k$.

\begin{theorem}\label{thm:wilf}
We have 
$\beta_{k,1}\sim \beta_{k,k}$.  For $n> k$ and $2\leq a \leq k-1$ we have 
$$|\Pi_n(\beta_{k,a+1})|\leq |\Pi_n(\beta_{k,a})|,$$ with equality if $a< k-1,$ and therefore $\beta_{k,a}\sim \beta_{k,b}$ when $2\leq a,b \leq k-1.$  Finally, we have 
$$|\Pi_n(\beta_{k,k})| < |\Pi_n(\beta_{k,k-1})|$$ 
when $n\geq 2k-2,$ so $\beta_{k,k} \prec \beta_{k,a}$ for $2\leq a \leq k-1.$
 
\end{theorem}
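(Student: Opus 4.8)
The plan is to prove each of the three assertions in turn, since the statement bundles together a Wilf-equivalence, a family of inequalities (with equality in a precise range), and a strict inequality. I would organize everything around the following patterns: $\beta_{k,a}$ is the partition of $[k]$ consisting of the block $\{1,\ldots,k\}\setminus\{a\}$ together with the singleton $\{a\}$. Thus avoiding $\beta_{k,a}$ means: one cannot find $k$ positions, one of which (the $a$-th smallest in value) sits in a singleton-role relative to the others, with the remaining $k-1$ all in one common block.

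First I would dispose of $\beta_{k,1}\sim\beta_{k,k}$ by complementation. Observe that $\beta_{k,k}=1\cdots(k-1)/k$ and $\beta_{k,1}=2\cdots k/1$ are complements of each other under the map subtracting from $k+1$, and that complementation is an involution on $\Pi_k$ that preserves Klazar-containment; hence $\sigma$ avoids $\beta_{k,1}$ iff $\sigma^c$ avoids $\beta_{k,1}^c=\beta_{k,k}$, giving a bijection $\Pi_n(\beta_{k,1})\to\Pi_n(\beta_{k,k})$ and the equality $|\Pi_n(\beta_{k,1})|=|\Pi_n(\beta_{k,k})|$.

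For the middle assertion, the goal is an injection $\Pi_n(\beta_{k,a+1})\hookrightarrow\Pi_n(\beta_{k,a})$ for $2\le a\le k-1$, refined to a bijection when $a<k-1$. The key structural observation is that containing $\beta_{k,a}$ amounts to locating a big block $B$ of size at least $k-1$ together with a ``singleton witness'': an element $x$ (in its own block in the pattern) that, relative to a selected set of $k-1$ elements of $B$, occupies rank $a$. As $a$ ranges over the interior values $2,\ldots,k-1$, the only thing that changes is the required rank of the witness $x$ among the chosen block elements. I would build the comparison map by acting on the positions of the elements within the offending block, essentially performing a rank shift (a local rearrangement/``slide'' of one element of the distinguished block past the witness) that converts an occurrence of $\beta_{k,a+1}$ into one of $\beta_{k,a}$ and is reversible precisely when there is enough room — which is exactly the condition $a<k-1$, i.e.\ when the witness can be pushed in both directions. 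When $a=k-1$ one boundary is lost, so the map is only an injection and the inequality need not reverse.

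I expect the main obstacle to be the final strict inequality $|\Pi_n(\beta_{k,k})|<|\Pi_n(\beta_{k,k-1})|$ for $n\ge 2k-2$, and more precisely identifying a concrete partition that avoids $\beta_{k,k-1}$ but contains $\beta_{k,k}$, together with showing that the injection from the middle part fails to be onto in just this case. The threshold $n\ge 2k-2$ strongly suggests the witnessing partition needs roughly two disjoint ``copies'' of a block of size $k-1$ worth of elements, so I would look for a specific $\sigma\in\Pi_n$ — something like two large blocks arranged so that a singleton-at-the-top pattern $\beta_{k,k}$ appears but the interior-singleton pattern $\beta_{k,k-1}$ does not — and verify directly that it lies in $\Pi_n(\beta_{k,k-1})\setminus\Pi_n(\beta_{k,k})$, which can first happen only once $n$ is large enough to house both blocks, explaining why equality persists for small $n$. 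The delicate point is checking that no smaller configuration works, so that $2k-2$ is genuinely the right threshold; the rest of the strict-inequality claim then follows from the injection already constructed together with this one extra element in the image's complement.
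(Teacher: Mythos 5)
Your first step (complementation for $\beta_{k,1}\sim\beta_{k,k}$) matches the paper exactly and is fine. The other two parts have genuine gaps.

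For the middle assertion, you have the right germ of an idea — the paper also works with a local rearrangement (a ``slide'') inside an offending block — but your account of where injectivity versus bijectivity comes from is off. In the paper's construction each individual slide is reversible for \emph{every} $a$ with $2\le a\le k-1$; that is what gives the injection $\Pi_n(\beta_{k,a+1})\to\Pi_n(\beta_{k,a})$ in the whole range. What is \emph{not} proved directly is surjectivity for $a<k-1$: the paper obtains the reverse inequality $|\Pi_n(\beta_{k,a})|\le|\Pi_n(\beta_{k,a+1})|$ by composing three injections, $\Pi_n(\beta_{k,a})\xrightarrow{\comp}\Pi_n(\beta_{k,k+1-a})\xrightarrow{\phi_{k-a}}\Pi_n(\beta_{k,k-a})\xrightarrow{\comp}\Pi_n(\beta_{k,a+1})$, which exists only when $k-a\ge 2$, i.e.\ $a<k-1$. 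Your claim that the rearrangement ``is reversible precisely when $a<k-1$'' conflates these two issues; if you tried to prove surjectivity of the slide map head-on you would be doing something harder than necessary, and as written the equality for $a<k-1$ is not established. A second omission: the map must be applied block by block, and one needs a lemma (the paper's Lemma~2.3) guaranteeing that sliding one block does not change whether any \emph{other} block contains $\beta_{k,a}$ or $\beta_{k,a+1}$ — without this the iterated map is not even well defined.

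For the final strict inequality, you correctly guess the shape of the witness (two disjoint blocks of size $k-1$, forcing $n\ge 2k-2$), but being in $\Pi_n(\beta_{k,k-1})$ and containing $\beta_{k,k}$ is only a necessary condition for lying outside the image of $\phi_{k-1}$; it does not by itself show non-surjectivity. The missing idea is structural: any $\pi\in\Pi_n(\beta_{k,k})$ has at most one block with $\ge k-1$ elements (two such blocks would already produce $\beta_{k,k}$), and the slide map preserves the multiset of block sizes, so every partition in the image of $\phi_{k-1}$ has at most one block of size $\ge k-1$ — whereas $1\cdots(k-1)/k\cdots(2k-2)/\{2k-1\}/\cdots/\{n\}$ avoids $\beta_{k,k-1}$ and has two. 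Also, you worry about showing that $2k-2$ is the sharp threshold; the theorem does not require this, only the inequality for $n\ge 2k-2$.
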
 

This theorem together with computational evidence for all patterns of length $\leq 9$ suggests the following conjecture.

\begin{conjecture}\label{conj:wilf}
The only Wilf-equivalences are $1/2/3\sim 13/2$, the equivalences established by Theorem~\ref{thm:wilf}, and the equivalences resulting from complementation.  		
\end{conjecture}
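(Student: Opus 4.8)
Since the conjecture asserts \emph{completeness} of a list of Wilf-equivalences, the positive half is already in hand: the equivalences of Theorem~\ref{thm:wilf} and the sporadic $1/2/3\sim 13/2$ of \cite{Sagan:2010aa} are given, and complementation is a genuine symmetry of Klazar containment. Indeed, the order-reversing bijection $\sigma\mapsto \sigma^c$ on $[n]$ (sending $j$ to $n+1-j$) carries a subset $S$ realizing $\tau$ in $\sigma$ to the subset $S^c$ realizing $\tau^c$ in $\sigma^c$, so it restricts to a bijection $\Pi_n(\tau)\to \Pi_n(\tau^c)$ for every $n$; hence $\tau\sim\tau^c$ always. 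The whole difficulty lies in the \emph{negative} direction: proving that no two patterns outside these families are Wilf-equivalent. Exhibiting one value of $n$ at which two counts differ suffices to separate a single pair (this is what the computations for length $\le 9$ accomplish), but establishing the conjecture requires separating infinitely many pairs, for all $k$, at once.

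The plan is to attack this by extracting, from the sequence $(|\Pi_n(\tau)|)_{n>k}$, a list of \emph{Wilf-invariants} — quantities that must agree for Wilf-equivalent patterns — rich enough to recover $\tau$ up to the allowed symmetries. The first natural invariant comes from the earliest nontrivial term: since $\sigma\vdash[k]$ contains $\tau\vdash[k]$ only when $\sigma=\tau$, one has $|\Pi_k(\tau)|=B_k-1$ independently of $\tau$, and the first discriminating value is $|\Pi_{k+1}(\tau)|$. Here $B_{k+1}-|\Pi_{k+1}(\tau)|$ counts partitions of $[k+1]$ containing $\tau$, which can be computed as an \emph{insertion count}: the number of (partition of $[k+1]$, deleted position) pairs projecting to $\tau$ is exactly $(k+1)(m+1)$, where $m$ is the number of blocks of $\tau$, corrected by those partitions arising in more than one way. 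I would first prove from this that \textbf{the number of blocks of $\tau$ is a Wilf-invariant}, and then, pushing the same bookkeeping to $|\Pi_{k+2}(\tau)|$ (and, if needed, to the growth rate of the sequence), that \textbf{the entire multiset of block sizes is a Wilf-invariant}. Both allowed symmetries respect this multiset: complementation preserves block sizes, and every $\beta_{k,a}$ has block-size multiset $\{k-1,1\}$, so such a result is fully consistent with the conjecture and reduces it to comparing patterns with identical block-size data.

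With block sizes fixed, the remaining task is to show that the only coincidences are complementation and, in the single case of two blocks with one singleton, the rearrangements $\beta_{k,a}$ of Theorem~\ref{thm:wilf}. Here I would exploit the growth dichotomy underlying Klazar's rationality results \cite{Klazar:Count2000}: the block-size multiset controls the asymptotics of $|\Pi_n(\tau)|$, so within a fixed multiset one must instead compare finer data — either subleading asymptotic coefficients or the exact initial segment $|\Pi_{k+1}(\tau)|,\ldots,|\Pi_{k+r}(\tau)|$ for a bounded $r$ — and show that it determines $\tau$ up to complementation. A successful execution would reduce the conjecture to finitely many \emph{shapes} per $k$ and, via a stabilization statement asserting that agreement on a bounded initial segment forces Wilf-equivalence, would let the verified range $k\le 9$ settle the sporadic behavior (in particular confirming that $1/2/3\sim 13/2$ has no analogue for $k\ge 4$).

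The main obstacle is precisely the step of separating patterns that share a block-size multiset. Unlike RGF avoidance, Klazar containment admits no general transfer-matrix or finite-automaton description — the structure one must remember while building an avoider can require unbounded memory — so there is at present no uniform handle on $|\Pi_n(\tau)|$ valid for all $\tau$. Consequently the hard part is not proving the invariance of the block data (which the insertion count should deliver) but ruling out \emph{accidental} coincidences among same-shape patterns, where one cannot compare closed forms term by term and where genuinely new structural or bijective input appears to be required.
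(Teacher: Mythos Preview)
This statement is a \emph{conjecture} in the paper, not a theorem; the authors offer no proof and base it explicitly on computational evidence for patterns of length at most $9$. There is therefore no argument in the paper against which to compare your proposal, and what you have written is a strategy outline for an open problem rather than a proof.

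That said, the outline has a concrete gap before the step you yourself flag as the main obstacle. You propose to read off from $|\Pi_{k+1}(\tau)|$, via the insertion count $(k+1)(m+1)$, that the number of blocks $m$ is a Wilf-invariant. But the very equivalence $1/2/3\sim 13/2$ that the conjecture singles out has $m=3$ on one side and $m=2$ on the other, so the block count is \emph{not} a Wilf-invariant. Concretely, each of these two patterns is contained in exactly $7$ partitions of $[4]$, yet their raw insertion counts are $16$ and $12$; the multiplicity correction absorbs the entire difference $(k+1)(m_1-m_2)=4$. If your intended claim is only for $k\ge 4$, you must supply a reason why the correction term can no longer compensate a change in $m$ once $k\ge 4$; the insertion identity alone does not give this, and nothing in your sketch isolates a structural feature of $k\ge 4$ that rules out the $k=3$ phenomenon. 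The same caveat applies, a fortiori, to the multiset of block sizes.

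So the philosophy---manufacture enough Wilf-invariants to pin $\tau$ down up to the allowed symmetries---is reasonable, but the first invariant you name already fails in the one sporadic case the conjecture must accommodate, and the later separation step you correctly identify as open. The proposal does not move the conjecture beyond the heuristic stage.
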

 
The first assertion of Theorem~\ref{thm:wilf} is a simple consequence of complementation. To prove the second assertion, we construct an injection 
$$\phi_a:\Pi_n(\beta_{k,a+1}) \to \Pi_n(\beta_{k,a}),$$
for each $2\leq a \leq  k-1$, and show that $\phi_a$ is a bijection if $a< k-1.$  Before we can write down this mapping we need a few lemmas and observations.   We start with these immediately and delay the proof of Theorem~\ref{thm:wilf} to the end of this section.  

To begin, fix $k$ and $2\leq a\leq k-1$.  Observe that $\pi\vdash [n]$ contains $\beta_{k,a}$ if and only if  $\pi$ contains a block $B$ such that
\begin{equation}\label{eq:blocks contain}
a-1\leq |\makeset{x\in B}{x<c}|\quad\mbox{and}\quad  k-a \leq |\makeset{x\in B}{x>c}|		
\end{equation}
for some $c\notin B$.   

Consequently, we say a finite set $B$ of integers \emph{contains} $\beta_{k,a}$ if it satisfies (\ref{eq:blocks contain}), otherwise we say $B$ \emph{avoids} $\beta_{k,a}$.  Consequently, a set partition $\pi$ avoids $\beta_{k,a}$ if and only if all of its blocks avoid $\beta_{k,a}$.  

Now, consider the anatomy of a block $B$ that avoids $\beta_{k,a+1}$ but contains $\beta_{k,a}$.   Since $B$ contains $\beta_{k,a}$, there exists some $c\notin B$ so that 
$$a-1 \leq |\makeset{x\in B}{x<c}|\quad\mbox{and}\quad  k-a \leq |\makeset{x\in B}{x>c}|.$$
Furthermore, since $B$ avoids $\beta_{k,a+1}$, the set on the left must have size exactly $a-1$.  For the same reason, the set on the right must be of the form
$$y_1-\ell,\ldots,y_1-2,y_1-1,y_1<\cdots < y_{k-a-1},$$
for some $\ell\geq 1$.  Therefore the block $B$ looks like
\begin{equation}\label{eq:B decomp}
	\begin{tikzpicture}[baseline={([yshift=2ex]current bounding box.center)}]
		\draw (0,0) rectangle (2,1) node[pos=.5] {$<c$};
		\draw[fill=gray!20] (3,0) rectangle (5,1)node[pos=.5] {};
		\draw (5,0) rectangle (8,1)node[pos=.5] {};
		
		\draw [
    thick,
    decoration={
        brace,
        mirror,
        raise=0.1cm
  },decorate] (0,0) -- (2,0) 
node [pos=0.5,anchor=north,yshift=-0.2cm] {$a-1$};

		\draw [
    thick,
    decoration={
        brace,
        mirror,
        raise=0.1cm
  },decorate] (3,0) -- (5,0) 
node [pos=0.5,anchor=north,yshift=-0.2cm] {$\ell$};

\draw [
    thick,
    decoration={
        brace,
        mirror,
        raise=0.1cm
  },decorate] (5,0) -- (8,0) 
node [pos=0.5,anchor=north,yshift=-0.2cm] {$k-(a+1)$};
	\end{tikzpicture}\ \raisebox{-10pt}{,}
\end{equation} 
where the elements in the shaded block are consecutive in value and the rightmost block's smallest element is $y_1$.  By decrementing the values in the shaded region, we effectively slide  the gray rectangle as far left as possible, obtaining the new block $B'$:
\begin{equation}\label{eq:B' decomp}
	\begin{tikzpicture}[baseline={([yshift=2ex]current bounding box.center)}]
		\draw (0,0) rectangle (2,1) node[pos=.5] {$<c$};
		\draw[fill=gray!20] (2,0) rectangle (4,1)node[pos=.5] {};
		\draw (5,0) rectangle (8,1)node[pos=.5] {};
		
		\draw [
    thick,
    decoration={
        brace,
        mirror,
        raise=0.1cm
  },decorate] (0,0) -- (2,0) 
node [pos=0.5,anchor=north,yshift=-0.2cm] {$a-1$};

		\draw [
    thick,
    decoration={
        brace,
        mirror,
        raise=0.1cm
  },decorate] (2,0) -- (4,0) 
node [pos=0.5,anchor=north,yshift=-0.2cm] {$\ell$};

\draw [
    thick,
    decoration={
        brace,
        mirror,
        raise=0.1cm
  },decorate] (5,0) -- (8,0) 
node [pos=0.5,anchor=north,yshift=-0.2cm] {$k-(a+1)$};
	\end{tikzpicture}\ \raisebox{-12pt}{.}
\end{equation}

\begin{remark}\label{rmk:avoid after slide}
We remark that in $B'$, the elements in the gray block are consecutive in value and the largest element in the leftmost block is one less than the smallest element in the gray block.  Consequently, $B'$ avoids $\beta_{k,a}$ while containing $\beta_{k,a+1}$.    	
\end{remark}

The new block $B'$ gives rise to a new set partition as summarized in our next definition.
\begin{definition}
	Let $\pi\vdash [n]$ and assume the $i$th block $B$ of $\pi$ avoids $\beta_{k,a+1}$ and contains $\beta_{k,a}$.  So $B$ is as depicted in (\ref{eq:B decomp}).  Now let $B'$ be the block depicted in (\ref{eq:B' decomp}).  We define $\slide_i(\pi)\vdash [n]$ to be the set partition $\pi' = B'/ \sigma$ where $\sigma$ is the set partition of $[n]\setminus B'$ that is order-isomorphic to the set partition obtained by deleting $B$ from $\pi$.
\end{definition}

We illustrate this definition with the following short example.
\begin{example}
Let $k=5, a=2$ so that $\beta_{5,3} = 1245/3$ and $\beta_{5,2} = 1345/2$.  If	
$$\pi = 1\ 3\ / 2\ 5\ 6\ 7\ 8\ 9/ 4\ 10 \in \Pi_{10},$$
then, since its second block avoids $\beta_{5,3}$ and contains $\beta_{5,2}$, we have
$$\slide_2(\pi) = 1\ \textbf{6}\ / 2\ \textbf{3\ 4\ 5}\ 8\ 9/ \textbf{7}\ 10,$$
where the elements affected by our sliding operation are highlighted.  
\end{example}

\begin{lemma}
If $\pi$, $B$, and $B'$ are as in the above definition and $B$ is the $i$th block (in standard form) of $\pi$, then $B'$ is the $i$th block in $\slide_i(\pi)$.  
\end{lemma}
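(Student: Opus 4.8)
The plan is to show that the minimum element of $B'$ equals that of $B$, and that the order-isomorphic relabeling used to build $\sigma$ does not disturb the blocks lying below this common minimum. Write $\mu = \min B$, and let $L$ denote the leftmost part in (\ref{eq:B decomp}). Since $2\leq a$, we have $a-1\geq 1$, so $L$ is nonempty and $\mu$ is the smallest element of $L$. The slide operation alters only the gray region, leaving $L$ and the rightmost part fixed, and after sliding the gray block occupies values strictly above $\max L$. Hence $B'\subseteq\{\mu,\mu+1,\ldots\}$ and in fact $\min B' = \mu = \min B$. This is the first step.

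Next, I use the fact that the position of a block in standard form is $1$ plus the number of blocks whose minimum is smaller, and count the blocks of $\slide_i(\pi)=B'/\sigma$ whose minimum lies below $\min B' = \mu$. Let $\rho$ be the partition obtained by deleting $B$ from $\pi$, a partition of $S=[n]\setminus B$, and let $f\colon S\to [n]\setminus B'$ be the order-preserving bijection defining $\sigma$. The key observation is that every integer less than $\mu$ lies in both $S$ and $[n]\setminus B'$: indeed $\{1,\ldots,\mu-1\}$ meets neither $B$ (as $\mu=\min B$) nor $B'$ (as $\mu=\min B'$). Since these are precisely the $\mu-1$ smallest elements of each of $S$ and $[n]\setminus B'$, the order isomorphism $f$ fixes them pointwise, and consequently $f$ carries the elements of $S$ exceeding $\mu$ to the elements of $[n]\setminus B'$ exceeding $\mu$.

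From this I conclude that for each block $C$ of $\rho$ we have $\min C<\mu$ if and only if $\min f(C)=f(\min C)<\mu=\min B'$. Thus the number of blocks of $\sigma$ whose minimum is below $\min B'$ equals the number of blocks of $\rho$ whose minimum is below $\mu$. But the blocks of $\rho$ are exactly the blocks of $\pi$ other than $B=B_i$, and in standard form precisely $B_1,\ldots,B_{i-1}$ have minimum below $\mu$ while $B_{i+1},\ldots,B_m$ have minimum above $\mu$. Hence exactly $i-1$ blocks of $\sigma$ lie below $B'$, which places $B'$ in position $i$ of $\slide_i(\pi)$, as claimed.

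The only genuinely delicate point is the middle step: verifying that $f$ fixes every integer below $\mu$ and therefore respects the partition of the block-minima into those below and those above $\mu$. Everything else is bookkeeping. Once $\min B' = \min B$ is established and the relabeling is seen to be order-preserving and to act trivially below $\mu$, the position of $B'$ is forced.
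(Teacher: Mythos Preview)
Your proof is correct and follows essentially the same approach as the paper's: both establish $\min B=\min B'$ from $a\geq 2$, then show the order-isomorphism between $[n]\setminus B$ and $[n]\setminus B'$ fixes every element below this common minimum, which pins down the position of $B'$. The only stylistic difference is that the paper separately checks $w_s>\min(B)\Rightarrow w_s\leq w_s'$ to handle the blocks above $\mu$, whereas you deduce this from the bijection having already accounted for everything below $\mu$; your version is slightly cleaner in that it does not appeal to the specific direction of the slide.
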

\begin{proof}
	First observe that as $a\geq 2$,  then $\min(B) = \min(B')$.  Now set
	$$[n]\setminus B = \{w_1< w_2<\cdots\}\qquad\mbox{and}\qquad [n]\setminus B' = \{w_1'< w_2'<\cdots\}.$$
	It follows from our construction of $B'$ that 
	 $$w_s < \min(B)\Longrightarrow w_s = w_s',$$ 
	 so $B'$ is no earlier than the $i$th block in $\slide_i(\pi)$ and 
	 $$w_s>\min(B)\Longrightarrow w_s\leq w_s',$$
	 so $B'$ is no later than the $i$th block in $\slide_i(\pi)$.  
\end{proof}

As the definition of our mapping $\phi_a$ involves repeated ``slide" operations, we require a lemma guaranteeing that blocks not involved in the slide operation do not change ``too much". Our next lemma spells out exactly what is meant by this.  

\begin{lemma}\label{lem:phi well defined}
Let $\pi=B_1/B_2/\cdots /B_m\vdash [n]$. Assume that for some fixed $i$, the block $B_i$ avoids $\beta_{k,a+1}$ and contains $\beta_{k,a}$ and set
$$\slide_i(\pi) = B_1'/B_2'/\cdots/B_m'.$$ 
Then for any $1\leq c\leq k$ and $j\neq i$, the block $B_j'$ avoids $\beta_{k,c}$ if and only if $B_j$ avoids $\beta_{k,c}$.  
\end{lemma}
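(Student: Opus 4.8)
The plan is to describe explicitly how $\slide_i$ acts on a block $B_j$ with $j\neq i$, and then to reduce $\beta_{k,c}$-containment of a finite set to a condition that this action manifestly preserves. First I would record the geometry of the slide. Writing $B=B_i$ as in (\ref{eq:B decomp}), let $M$ be the largest element of the left block (the top of the $a-1$ part) and let $N=y_1$ be the smallest element of the right block; both $M$ and $N$ lie in $B$. The blocks $B$ and $B'$ coincide outside the interval $I=[M+1,N-1]$, and each contains exactly $\ell$ elements of $I$ (the gray interval, at the top of $I$ in $B$ and at the bottom of $I$ in $B'$). Hence, using the order-isomorphism $[n]\setminus B\to[n]\setminus B'$ from the definition of $\slide_i$ together with a count of the elements of $I$, the induced permutation $\rho$ of $[n]$ fixes every value outside $I$ and carries the gap interval $[M+1,N-\ell-1]$ (which is disjoint from $B$) onto $[M+\ell+1,N-1]$ by $x\mapsto x+\ell$. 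Consequently, for $j\neq i$ the block $B_j'=\rho(B_j)$ is obtained from $B_j$ by adding $\ell$ to each element lying in the gap interval and fixing all others; in particular $\rho$ restricts to an order isomorphism from $B_j$ onto $B_j'$, so $|B_j|=|B_j'|$. (When $a=k-1$ the right block is empty, but then the hypothesis is vacuous, since no block can both avoid $\beta_{k,k}$ and contain $\beta_{k,k-1}$; so we may assume $N$ exists.)

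Next I would restate block containment combinatorially. By (\ref{eq:blocks contain}) (with its index equal to $c$), a finite set $B=\{b_1<\cdots<b_r\}$ contains $\beta_{k,c}$ exactly when there is an index $s$ with $c-1\leq s\leq r-(k-c)$ admitting a witness strictly between $b_s$ and $b_{s+1}$, i.e.\ an integer not in $B$ lying in $(b_s,b_{s+1})$; here $s=0$ and $s=r$ always admit a witness (below $b_1$, resp.\ above $b_r$), while for $1\leq s\leq r-1$ a witness exists iff $b_{s+1}\geq b_s+2$. Thus whether $B$ contains $\beta_{k,c}$ depends only on $r$ and on the \emph{adjacency pattern} of $B$, namely the record, for each $1\leq s\leq r-1$, of whether $b_{s+1}=b_s+1$.

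It then remains to show that $\rho$ preserves this data for $B_j$. Since $|B_j|=|B_j'|$, I only need to check that consecutive elements $u<v$ of $B_j$ are adjacent ($v=u+1$) if and only if their images are. If $u,v$ lie in a common region (both $\leq M$, both in the gap, or both $\geq N$) then $\rho$ acts on them by a single translation and adjacency is clearly preserved. For a straddling pair I would use crucially that $M,N\in B_i$, hence $M,N\notin B_j$: if $u\leq M$ and $v$ is in the gap, then $u\leq M-1$ and $v\geq M+1$, so $u,v$ are non-adjacent, and after the slide $\rho(u)=u\leq M-1$ while $\rho(v)=v+\ell\geq M+\ell+1$, still non-adjacent; symmetrically if $u$ is in the gap and $v\geq N$ (so $v\geq N+1$), or if $u\leq M$ and $v\geq N$. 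In every case adjacency is preserved, so $B_j$ and $B_j'$ have the same cardinality and the same adjacency pattern, and by the previous paragraph $B_j$ contains $\beta_{k,c}$ iff $B_j'$ does, for every $1\leq c\leq k$; equivalently $B_j'$ avoids $\beta_{k,c}$ iff $B_j$ does.

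The part I expect to require the most care is the analysis of the straddling pairs: the whole point is that the slide can neither merge two elements of $B_j$ into an adjacent pair nor tear an adjacent pair apart at the two ends of the interval $I$, and this is precisely what is guaranteed by the boundary values $M$ and $N$ belonging to $B_i$, which block any element of $B_j$ from sitting immediately across the cut. Everything else is bookkeeping about the explicit permutation $\rho$.
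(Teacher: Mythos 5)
Your overall strategy is the same as the paper's: reduce $\beta_{k,c}$-containment of a block $B_j$ to its cardinality together with its adjacency pattern, and show that the order isomorphism induced by the slide (your $\rho$, which fixes everything outside the gap and translates the gap up by $\ell$) preserves that pattern. Your straddling-pair analysis plays exactly the role of the intervals $I$ and $J$ and the identities (\ref{eq:left of slide})--(\ref{eq:right of slide}) in the paper's proof. There is, however, one genuine error: your dismissal of the case $a=k-1$ as vacuous is false. A block certainly can avoid $\beta_{k,k}$ while containing $\beta_{k,k-1}$; for example $B=\{1,\ldots,k-2,k\}$ inside $[k]$ does so (witness $c=k-1$), and more generally so does any block consisting of $k-2$ elements, a gap, and a consecutive run ending at $n$. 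This case is not a corner you can discard: the injection $\phi_{k-1}:\Pi_n(\beta_{k,k})\to\Pi_n(\beta_{k,k-1})$ is precisely what the paper uses to prove the third assertion of Theorem~\ref{thm:wilf}, so slides with $a=k-1$ genuinely occur and the lemma must cover them. The repair is easy --- when $a=k-1$ the forced shape of $B_i$ is ``$k-2$ elements, a gap, then a consecutive run ending at $n$,'' so one may set $N=n+1$ formally and your cases involving ``$v\geq N$'' become vacuous (the paper handles this with its parenthetical ``for $a<k-1$'') --- but as written your proof omits a case that occurs, and the reason you give for omitting it is a false statement.

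A smaller point: your claim that the indices $s=0$ and $s=r$ ``always admit a witness'' is only true if the witness is allowed to lie outside $[n]$. Under the reading that matters for Klazar containment (the witness must be an element of another block, hence of $[n]\setminus B_j$), a witness below $b_1$ exists iff $\min B_j>1$ and one above $b_r$ exists iff $\max B_j<n$, so containment of $\beta_{k,1}$ and $\beta_{k,k}$ is not determined by cardinality and adjacency pattern alone. To close this you would add that $\rho$ fixes $1$ and $n$ whenever they lie outside $B_i$, so $\min B_j=1$ iff $\min B_j'=1$ and $\max B_j=n$ iff $\max B_j'=n$. The paper's own ``general observation'' is imprecise at the same boundary, so this is a shared and easily patched wrinkle rather than a defect specific to your argument.
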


\begin{proof}
Fix $j\neq i$, $1\leq c\leq k$ and set
$$[n]\setminus B_i = \{w_1< w_2<\cdots\}\qquad\mbox{and}\qquad [n]\setminus B_i' = \{w_1'< w_2'<\cdots\}.$$
First, we claim that  $w_t+1 = w_{t+1}$ if and only if $w'_t+1 = w_{t+1}'$.  For the moment, let us assume this claim.  Next we make the following general observation: Any block $B_j$ avoids $\beta_{k,c}$ if and only if whenever $x\in B_j$ is such that 
$$c-1 \leq |\makeset{y\in B_j}{y\leq x}|\quad\mbox{and}\quad  k-c \leq |\makeset{y\in B_j}{y\geq x}|,$$
then $x+1\in B_j$.   

As  $\pi - B_i$ is order-isomorphic to $\slide_i(\pi) - B_i'$, it now follows that $B_j$ avoids $\beta_{k,c}$ if and only if $B_j'$ does too. 

It only remains to prove our claim.  Since $B_i$ avoids $\beta_{k,a+1}$ and contains $\beta_{k,a}$ it is depicted in  (\ref{eq:B decomp}) and $B_i'$ is depicted in (\ref{eq:B' decomp}).  Observe that if $I$ is the set of values in $[n]\setminus B_i$ that fall in the gap to the left of the gray block in (\ref{eq:B decomp}) and $J$ is the set of  values in $[n]\setminus B_i'$ that fall in the gap to the right of the gray block in (\ref{eq:B' decomp}), then $I$ and $J$ are both intervals and $|I| = |J|$.  If in (\ref{eq:B decomp}) $\ell$ is the maximum of the values in the leftmost block and (for $a<k-1$) $r$ is the minimum of the values in the rightmost block, then it follows that 
\begin{equation}\label{eq:left of slide}
\makeset{w_s}{w_s<\ell} = \makeset{w_s'}{w_s'< \ell}	
\end{equation}
and
\begin{equation}\label{eq:right of slide}
\makeset{w_s}{w_s>r} = \makeset{w_s'}{w_s'> r}.
\end{equation}
Assuming  $w_t$ and $w_{t+1}$ are consecutive in value it follows that they both lie in either the set (\ref{eq:left of slide}), the set (\ref{eq:right of slide}), or in the interval $I$.  In the first two cases, it easily follows from our above equalities that   $w_t'+1=w_{t+1}'$.    In the third case, the set equality in (\ref{eq:left of slide}) along with the fact that $|I|=|J|$ implies that  $w_t',w'_{t+1}\in J$. Since $J$ is an interval, we conclude that $w_t',w'_{t+1}$ are consecutive in value.  

The proof that if $w_t'$ and $w_{t+1}'$ are consecutive in value, then $w_t$ and $w_{t+1}$ are too, is analogous.  Its details are omitted.  
\end{proof}

Finally, we are in a position to define $\phi_a$.  For any $\pi\in \Pi_n(\beta_{k,a+1})$ first set $\pi_0 = \pi$ and let $m$ be the number of blocks in $\pi$.  Having defined $\pi_i$ we obtain $\pi_{i+1}$ by considering the $(i+1)$st block of $\pi_i$.  If this block contains $\beta_{k,a}$, then set $\pi_{i+1}=\slide_{i+1}(\pi_i)$, otherwise we set $\pi_{i+1} =\pi_i$.  Lastly we set $\phi_a(\pi) = \pi_m$.  

We pause to point out that our mapping $\phi_a$ is well defined.  First, we note that Lemma~\ref{lem:phi well defined} guarantees that the $(i+1)$st block in $\pi_i$ avoids $\beta_{k,a+1}$.  (This is  crucial since if this block contained $\beta_{k,a+1}$ and also contained $\beta_{k,a}$ our sliding operation would not be defined.) Additionally, Lemma~\ref{lem:phi well defined} together with  Remark~\ref{rmk:avoid after slide} guarantees that  $\phi_a(\pi)$ avoid $\beta_{k,a}$.  

Before proving that  $\phi_a$ is a bijection we demonstrate this mapping.  

\begin{example}\label{ex:slide}
Consider $\beta_{5,3} = 1245/3$ and $\beta_{5,2} = 1345/2$ and fix
$$\pi = 1\ 10\ 11\ 12/ 2\ 4\ 5\ 8/ 3\ 6\ 7\ 9 \in \Pi_{12}(\beta_{5,3}).$$
The above algorithm for $\phi_3$ yields the following steps, where the elements affected by each slide are highlighted.  
\begin{align*}
\pi_0 =&1\ 10\ 11\ 12 / 2\ 4\ 5\ 8/ 3\ 6\ 7\ 9\\
\pi_1 =& 1\ \textbf{2}\ 11\ 12 / \textbf{3\ 5\ 6\ 9}/ \textbf{4\ 7\ 8\ 10}\\
\pi_2 =& 1\ 2\ 11\ 12 / 3\ \textbf{4}\ 6\ 9/ \textbf{5}\ 7\ 8\ 10\\
\pi_3 =& 1\ 2\ 11\ 12 / 3\ 4\ \textbf{7}\ 9/ 5\ \textbf{6}\ 8\ 10
\end{align*}

\end{example}

\begin{proof}[Proof of Theorem~\ref{thm:wilf}]

As each slide in the definition of $\phi_a$ is certainly reversible, it follows that the mapping
$$\phi_a:\Pi_n(\beta_{k,a+1}) \to \Pi_n(\beta_{k,a}),$$
is injective, provided $2\leq a \leq k-1$.  Under this restriction we therefore have 
 $$|\Pi_n(\beta_{k,a+1})|\leq   |\Pi_n(\beta_{k,a})|.$$  
 If $a<k-1,$ then $k-a\geq 2,$ so
  the composition of injective maps
	$$\Pi_n(\beta_{k,a}) \xrightarrow{\comp} \Pi_n(\beta_{k,k+1-a}) \xrightarrow{\phi_{k-a}} \Pi_n(\beta_{k,k-a}) \xrightarrow{\comp} \Pi_n(\beta_{k,a+1}),$$
	implies that 
	$$|\Pi_n(\beta_{k,a})|\leq    |\Pi_n(\beta_{k,a+1})|$$
	and proves that $\phi_a$ is bijective.  This completes the proof of the second assertion of Theorem~\ref{thm:wilf}.
	
	To prove the third assertion, we show that the injection $\phi_{k-1}:\Pi_n(\beta_{k,k})\rightarrow \Pi_n(\beta_{k,k-1})$ is not surjective when $n\geq 2k-2.$  To see this, it will be helpful to observe that if $\pi\in \Pi_n(\beta_{k,k})$ then at most one block of $\pi$ has $k-1$ or more elements.  For if one block has elements $x_1<\cdots <x_{k-1}$ and another block has elements $y_1<\cdots <y_{k-1}$, then either $x_{k-1}< y_{k-1}$ or $y_{k-1}< x_{k-1}$, so $\pi$ contains the partition $\beta_{k,k}$, a contradiction.  It now follows from the definition of $\phi_{k-1}$ that for every $\pi\in \Pi_n(\beta_{k,k})$ there is at most one block of size greater than or equal to $k-1$ in $\phi_{k-1}(\pi).$  But if $n\geq 2k-2$ then the element $1,2,\ldots,k-1/k,\ldots,2k-2/(2k-1),\ldots, n$ of $\Pi_n(\beta_{k,k-1})$ has at least two blocks of size $k-1.$    
\end{proof}

\section{Partial Ordering by $\prec$}\label{sec:prec}

\subsection{The pattern $\beta_k$}

We recall that $\beta_k$ denotes the element of $\Pi_k$ that has only one block.  If $\tau$ is any element of $\Pi_k$ other than $\beta_k$ itself, computer evidence suggests that $\tau\prec \beta_k$  and $|\Pi_n(\tau)|< |\Pi_n(\beta_k)|$ for all $n> k.$

\begin{conjecture}\label{conj:order}
Let $k\geq 4.$  If $\tau\in \Pi_k$ and $\tau\neq \beta_k$, then $\tau\prec \beta_k$ and $|\Pi_n(\tau)|< |\Pi_n(\beta_k)|$ for all $n> k.$
\end{conjecture}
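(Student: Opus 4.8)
The plan is to prove the equivalent inequality $|\Pi_n(\tau)|<|\Pi_n(\beta_k)|$ by first reformulating both sides concretely. Since a partition contains $\beta_k$ exactly when one of its blocks has at least $k$ elements, $\Pi_n(\beta_k)$ is simply the set of partitions of $[n]$ all of whose blocks have size at most $k-1$. Writing $B=\Pi_n(\tau)\setminus\Pi_n(\beta_k)$ and $C=\Pi_n(\beta_k)\setminus\Pi_n(\tau)$ and cancelling the common part $\Pi_n(\tau)\cap\Pi_n(\beta_k)$, which occurs in both counts, the conjecture becomes equivalent to the single inequality
\[
|B|<|C|,
\]
where $B$ consists of the partitions that avoid $\tau$ yet have a block of size $\ge k$, and $C$ consists of the partitions all of whose blocks have size $\le k-1$ but which nevertheless contain $\tau$. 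This is the form I would actually attack: it discards the large overlap and isolates exactly the discrepancy between the two avoidance classes.

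To produce $|B|<|C|$ I would construct a non-surjective injection $\Psi\colon B\to C$. The guiding observation is that an element $\sigma\in B$ has a block $D$ with $|D|\ge k$, and, because $\tau$ has at least two blocks, such a large block contributes no occurrence of $\tau$ on its own; these big blocks are essentially unconstrained material. The idea is to carve each block of size $\ge k$ into pieces of size $\le k-1$ in a controlled, reversible way --- in the spirit of the $\slide$ operation and the maps $\phi_a$ of Section~\ref{sec:wilf} --- while \emph{deliberately} arranging the fragments so that the image contains a copy of $\tau$. Since a block of size $\ge k$ supplies $k$ mutually ``together'' elements and every block of $\tau$ has size $\le k-1$, there is always enough room to realize the shape $\tau$ among the pieces, so the image lands in $C$; and if the splitting records which cut was made (as the slide maps do), $\Psi$ is invertible onto its image, hence injective. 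Strictness I would obtain by exhibiting an explicit member of $C$ outside the image: the partition consisting of a copy of $\tau$ on $\{1,\ldots,k\}$ together with the singletons $\{k+1\},\ldots,\{n\}$ lies in $C$ whenever $n>k$, and one designs $\Psi$ so that such ``$\tau$-plus-singletons'' partitions are never produced by splitting a genuine big block. I would take the two-block case, established in this section, as the base, and attempt an induction on the number of blocks of $\tau$, using $\tau\sim\tau^c$ to cut down the cases.

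The main obstacle is that, unlike $\beta_k$ (bounded block size) or the one-singleton patterns $\beta_{k,a}$ of Section~\ref{sec:wilf}, a general $\tau$ admits \emph{no block-local avoidance criterion}: an occurrence of $\tau$ may be spread across several blocks of $\sigma$, so whether $\sigma\in B$ genuinely avoids $\tau$ depends on the global interaction of all its blocks at once. This couples the admissible cuts of a big block to the configuration of the remaining blocks, and it is exactly this coupling that makes a uniform, block-by-block injection $\Psi$ hard to define and, worse, hard to invert, since after splitting one must be able to distinguish the fragments of the former big block from the pre-existing small blocks in order to reverse the map. Controlling this bookkeeping simultaneously for every shape of $\tau$, and verifying non-surjectivity uniformly, is where I expect the argument to stall, and it is presumably why the statement is offered here only as a conjecture; the cleanest partial result, which we prove in this section, is the two-block case, where the avoidance condition is tame enough for the slide-type construction to go through.
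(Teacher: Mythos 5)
The statement you are addressing is Conjecture~\ref{conj:order}, and the paper offers no proof of it: it is established there only for patterns $\tau$ with exactly two blocks (Theorem~\ref{thm:twoblocks}) and, via Theorems~\ref{thm:delta_lessthan_sigma} and~\ref{thm:sigma_less_beta}, for $k=4$. Your reduction to $|B|<|C|$ is correct and is essentially the same bookkeeping as in the paper's two-block proof, where a nonsurjective injection $\varphi\colon \Pi_n-\Pi_n(\beta_k)\to\Pi_n-\Pi_n(\sigma)$ is taken to be the identity on the common part and, on the partitions that avoid $\sigma$ but have a block of size at least $k$, carves each such block $C$ into pieces $A^*,B_1,\ldots,B_q$ of size at most $k-1$ arranged so that each $A^*/B_i$ contains $\sigma$. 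Your proposed $\Psi$ is exactly the natural generalization of that construction, and your non-surjectivity witness ($\tau$ on $[k]$ plus singletons) is in the same spirit as the paper's witness $A/\,B\cup\{k+1\}/\,\text{singletons}$.

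However, as written your proposal is not a proof, and you say so yourself: the step where the argument must actually be carried out --- defining the carving of a big block for an arbitrary $\tau$ and, crucially, proving injectivity --- is only gestured at. The gap is concrete. In the two-block case the paper inverts $\varphi$ by the rule ``coalesce any two blocks $D,E$ of $\varphi(\pi)$ with $D/E\supseteq\sigma$,'' and this works because $\pi$ avoids $\sigma$, so any such pair must be fragments of one original block. For a $\tau$ with three or more blocks the analogous recovery rule fails: an occurrence of $\tau$ in $\Psi(\pi)$ is spread over several blocks, the ``these blocks jointly contain $\tau$'' relation does not canonically determine which collections of fragments to merge, and fragments of a former big block need not be distinguishable from pre-existing small blocks of $\pi$. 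Your closing paragraph identifies precisely this coupling as the obstruction, which is accurate, but identifying the obstruction is not the same as overcoming it. The honest assessment is that your write-up is a plausible plan of attack that reproduces the paper's partial result and correctly locates why the general case remains open; it does not settle the conjecture, and neither does the paper.
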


It follows from Theorems~\ref{thm:twoblocks}, \ref{thm:delta_lessthan_sigma},  and \ref{thm:sigma_less_beta} that Conjecture~\ref{conj:order} is true for $k=4$.  

\begin{theorem}\label{thm:twoblocks} Let $k\geq 4$.  If $\sigma \in \Pi_k$ and $ \sigma$ has exactly two blocks, then $\sigma \prec \beta_k.$  In fact, $|\Pi_n(\sigma)| < |\Pi_n(\beta_k)|$ for all $n>k.$
\end{theorem}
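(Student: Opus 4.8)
The first thing I would record is that $\pi$ contains $\beta_k$ precisely when some block of $\pi$ has at least $k$ elements; equivalently, $\Pi_n(\beta_k)$ is exactly the family of partitions of $[n]$ all of whose blocks have size at most $k-1$. Since a two-block pattern $\sigma$ has blocks of sizes $p$ and $q$ with $p+q=k$ and $p,q\geq 1$, both blocks of $\sigma$ have size at most $k-1$; in particular $\sigma$ padded by $n-k$ singletons lies in $\Pi_n(\beta_k)$ and contains $\sigma$. Writing $A=\Pi_n(\sigma)$ and $B=\Pi_n(\beta_k)$, the partitions lying in both sets are the $\sigma$-avoiders all of whose blocks are small, so after cancelling $|A\cap B|$ the desired inequality $|A|<|B|$ is equivalent to
$$|A\setminus B| < |B\setminus A|,$$
that is, the number of $\sigma$-avoiders possessing a block of size $\geq k$ must be strictly smaller than the number of all-small-block partitions that contain $\sigma$. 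The plan is to prove this by building an injection $A\setminus B\hookrightarrow B\setminus A$ and then exhibiting, for every $n>k$, an explicit element of $B\setminus A$ outside its image. Combined with the identity map on $A\cap B$ (whose images avoid $\sigma$ and are therefore disjoint from the images in $B\setminus A$, which contain $\sigma$), this produces an injection $A\hookrightarrow B$ that is not onto.

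To build the injection I would shrink every block of size $\geq k$ down to size $\leq k-1$ while simultaneously planting at least one occurrence of $\sigma$. Concretely, for an oversized block $B_0=\{b_1<\cdots<b_s\}$ with $s\geq k$, I would recolour the $k$ smallest elements $b_1,\dots,b_k$ according to the word in $\{1,2\}$ that records, for each position, which block of $\sigma$ that position occupies: this splits $b_1,\dots,b_k$ into a $p$-subset and a $q$-subset whose restriction standardizes to $\sigma$, and it carves the surplus $b_{k+1},\dots,b_s$ into further runs of size $\leq k-1$ by a fixed rule. Because $p,q\leq k-1$, every block of the output then has size $\leq k-1$, so the image lies in $B$; since the output genuinely contains $\sigma$, it lands in $B\setminus A$. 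The blocks of $\pi$ other than the oversized ones are left untouched, so no elements collide and the forward map is well defined.

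The crux — where I expect essentially all the difficulty to lie — is injectivity together with uniformity over the shape of $\sigma$. The admissible large-block structure of a $\sigma$-avoider depends strongly on $\sigma$: arguing as in the third paragraph of the proof of Theorem~\ref{thm:wilf} (comparing the second-smallest and second-largest elements of two candidate blocks) one finds that a $12/34$-avoider can contain at most one block of size $\geq k$, whereas a $13/24$-avoider is merely a non-crossing partition and may carry arbitrarily many large blocks. The cutting-and-recolouring rule must therefore be canonical enough to be reversible — from the output one must be able to recognise exactly which small blocks were carved out of a single oversized block and re-merge them, without mistaking them for blocks already present in $\pi$, and this recovery must succeed simultaneously for every interleaving type of $\sigma$. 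Granting such a reversible rule, strictness follows by producing, for each $n>k$, a small-block partition that contains $\sigma$ yet cannot be the by-product of cutting an oversized block: the partition obtained by planting a single copy of $\sigma$ and filling the remaining $n-k$ points with singletons is the natural candidate, since its ``non-$\sigma$'' part is unconstrained and hence need not match the forced structure of the untouched blocks of a $\sigma$-avoider with an oversized block. Verifying that this witness is omitted for all $n>k$ then yields $|\Pi_n(\sigma)|<|\Pi_n(\beta_k)|$, and in particular $\sigma\prec\beta_k$.
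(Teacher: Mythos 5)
Your reduction to $|A\setminus B|<|B\setminus A|$ is sound and is exactly the frame the paper uses (its map $\varphi:\Pi_n-\Pi_n(\beta_k)\to\Pi_n-\Pi_n(\sigma)$ is the identity on the overlap and sends $\sigma$-avoiders with an oversized block to small-block partitions containing $\sigma$). But there is a genuine gap precisely where you locate the difficulty: you never specify the splitting rule, and the version you do sketch would not be reversible. If you recolour only the $k$ smallest elements of an oversized block by the $\{1,2\}$-word of $\sigma$ and chop the surplus $b_{k+1},\dots,b_s$ into runs ``by a fixed rule,'' those surplus runs carry no mark tying them to the planted copy of $\sigma$, so on the output side they are indistinguishable from small blocks that were already present in $\pi$; your own recovery criterion (``recognise which small blocks were carved out'') has nothing to latch onto. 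The paper's construction resolves exactly this point: writing $\sigma=A/B$ with $|B|=m$ and $|C|-|A|=qm+r$, it distributes the \emph{entire} oversized block $C$ into one set $A^*$ of size $|A|+r$ and $q$ sets $B_1,\dots,B_q$ each of size $m$, interleaved so that \emph{every} pair $A^*/B_i$ contains $\sigma$. The single inverse rule ``merge any two blocks $D,E$ such that $D/E$ contains $\sigma$'' then recovers $\pi$, because $\pi$ avoided $\sigma$ and hence no two blocks from different sources can be accidentally merged. That interleaving is the substance of the proof, not a detail one may grant.

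Second, your witness for non-surjectivity fails. Take $\sigma=12\cdots(k-1)/k$. Then ``$\sigma$ padded with singletons,'' i.e.\ $\{1,\dots,k-1\}/\{k\}/\{k+1\}/\cdots/\{n\}$, \emph{is} in the image: it is what any reasonable splitting (including your recolouring) produces from $\{1,\dots,k\}/\{k+1\}/\cdots/\{n\}$, which is a $\sigma$-avoider with an oversized block. The paper instead uses the partition with blocks $A$, $B\cup\{k+1\}$, and $n-k-1$ singletons: in any image partition, every block carved from an oversized block other than $A^*$ has size exactly $m$, so a block of size $m+1$ playing the role of $B$ cannot occur --- and even this requires a separate complementation argument in the boundary case $m+1=k$ (i.e.\ $\sigma=\beta_{k,1}$). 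Both the explicit construction and a correct strictness witness therefore still need to be supplied; as written the argument does not go through.
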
 
\begin{proof} Suppose $\sigma\in \Pi_k$ and $\sigma$ has exactly two blocks, so $\sigma=A/B,$ with $1\in A.$ Suppose $a_1,b_1,a_2,b_2,\ldots,a_j,b_j$ are positive integers and $a_{j+1}$ is a nonnegative integer such that the first $a_1$ elements of $[k]$ are in $A$, the next $b_1$ elements of $[k]$ are in $B$, the next $a_2$ elements of $[k]$ are in $A$, and so on, so that $a_{j+1}=0$ if $k\in B$ and $a_{j+1}> 0$ if $k\in A.$
 
To prove the theorem we establish, for any given $n>k,$ a nonsurjective injection $$\varphi:\Pi_n-\Pi_n(\beta_k)\rightarrow \Pi_n-\Pi_n(\sigma).$$ To define $\varphi,$ take any $\pi\in \Pi_n-\Pi_n(\beta_k).$  If $\pi\in \Pi_n-\Pi_n(\sigma),$ let $\varphi(\pi)=\pi.$

Now suppose $\pi\in \Pi_n(\sigma)$. Since $\pi\in \Pi_n-\Pi_n(\beta_k)$, $\pi$ has at least one block with at least $k$ elements.  We obtain $\varphi(\pi)$ from $\pi$ by partitioning each such block $C$ in the following way.

Recalling that $\sigma=A/B$, let $|B|=m$, so that $m=b_1+\cdots +b_j$.  Write $$|C|-|A|=qm+r,$$ where $q\geq 1$ and $0\leq r <m$ are integers.  Then we have 
$$|C|=|A|+r+q(b_1+\cdots +b_j).$$ 
We order the elements of $C$ from smallest to largest and define a subset $A^*$ of $C$ of cardinality $|A|+r$, as follows.  Let the first $a_1$ elements of $A^*$ be the first $a_1$ elements of $C$. Skip over the next $qb_1$ elements of $C$, and let the next $a_2$ elements of $C$ be the next $a_2$ elements of $A^*.$  Then skip over the next $qb_2$ elements of $C$ and let the next $a_3$ elements of $C$ be the next $a_3$ elements of $A^*.$ Continuing in this way, define the first $a_1+\cdots +a_{j+1}=|A|$ elements of $A^*.$ Then add the last $r$ elements of $C$ to $A^*,$ so that $|A^*|=|A|+r.$

Next we define subsets $B_1,\ldots,B_q$ of $C$ such that $|B_i|=m$ for $1\leq i\leq q$.  Take the first $qb_1$ elements of $C$ that were skipped over in the construction of $A^*$, and put the first $b_1$ of these elements in $B_1,$ the next $b_1$ of these elements in $B_2,$ and so on. Then take the next $qb_2$ elements of $C$ that were skipped over in the construction of $A^*$ and put the first $b_2$ of these elements in $B_1,$ the next $b_2$ of these elements in $B_2,$ and so on.  Continuing in this way, complete the construction of $B_1,\ldots,B_q$.

We partition $C$ into blocks $A^*,B_1,\ldots,B_q$.  By construction, we see that for each $B_i$, the partition $A^*/B_i$ contains the partition $\sigma$.  We have $|B_i|=m<k$ and $|A^*|=|A|+r< |A|+m=k$.

When $\pi\in \Pi_n(\sigma)$, we obtain $\varphi(\pi)$ from $\pi$ by partitioning each block $C$ of size at least $k$ in the way just indicated.  Clearly, $\varphi(\pi)\in \Pi_n-\Pi_n(\sigma)$.  We also have $\varphi(\pi)\in \Pi_n(\beta_k)$, by the last sentence of the preceding paragraph.  For $\pi\in \Pi_n-\Pi_n(\sigma)$ we had $\varphi(\pi)=\pi\notin\Pi_n(\beta_k)$, so to show that $\varphi$ is one-to-one it suffices to show that, for $\pi\in \Pi_n(\sigma)$, we can recover $\pi$ from $\varphi(\pi)$.

To show this, observe that each block of $\varphi(\pi)$ is a subset of a block of $\pi$, and that if $D,E$ are blocks of $\varphi(\pi)$ such that the partition $D/E$ contains $\sigma$, then since $\pi\in\Pi_n(\sigma)$, $D$ and $E$ must be subsets of the same block of $\pi$.  Thus we obtain $\pi$ from $\varphi(\pi)$ by coalescing into one block the elements of any blocks $D,E$ of $\varphi(\pi)$ such that $D/E$ contains $\sigma$.

Finally, to show that $\varphi$ is not surjective, first suppose that $m+1< k$ and consider the partition $\gamma$ of $[n]$ whose blocks are $A, B\cup\{k+1\}$ and $n-k-1$ singleton blocks. Then $\gamma\in \Pi_n-\Pi_n(\sigma)$.  Suppose $\pi\in \Pi_n-\Pi_n(\beta_k)$ and $\varphi(\pi)=\gamma$.  Since $m+1< k$, we have $\gamma\in \Pi_n(\beta_k)$, so $\gamma \neq \pi$ and thus $\pi\in \Pi_n(\sigma)$.   Since $\varphi(\pi)=\gamma,$ it follows from the definition of $\varphi$ that $A\cup B \cup \{k+1\}$ is contained in some block $F$ of $\pi,$ and $|F|> k$.   In defining $\varphi(\pi)$, the block $A^*$ derived from $F$ contains the smallest element of $F$, namely 1, so $A^*=A$ (since $A$ is the block of $\gamma$ that contains 1). The other blocks derived from $F$ have $m$ elements each.  This is impossible, since the block $B\cup \{k+1\}$ is derived from $F$.  

Now consider the case $m+1=k$.  In this case $\sigma=\beta_{k,1}$.  Since $\beta_{k,1}\sim\beta_{k,k}$ by Theorem~\ref{thm:wilf} and $\varphi$ is not surjective when $\sigma=\beta_{k,k}$ by the preceding paragraph, it follows that $\varphi$ is not surjective when $\sigma=\beta_{k,1}$.\end{proof}

\subsection{The pattern $\sigma_k$}

We recall that $\sigma_k = 1/2/3/\cdots/ k$.  In this subsection we prove that for all $\delta\in \Pi_k$,  whose blocks consist of all singletons except for one doubleton block,
$$\delta\prec \sigma_k\prec \beta_k.$$ 
We begin with a lemma.

\begin{lemma}\label{lem:induction} Let $\alpha\vdash [k-1]$.   If 
$$|\Pi_n(\alpha)| <|\Pi_n(\sigma_{k-1})|$$ 
for all $n>k-1$, then  
$$|\Pi_n(1/\alpha')| <|\Pi_n(\sigma_{k})|\quad\mbox{and}\quad |\Pi_n(\alpha/k)| <|\Pi_n(\sigma_{k})|, $$ 
for all $n>k$, where $\alpha'$ is obtained by incrementing all the values in $\alpha$ by $1$.  
\end{lemma}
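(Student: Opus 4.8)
The plan is to deduce both inequalities from a single non-surjective injection manufactured out of the hypothesis, and to handle the two cases by complementation. First I would reduce to the statement for $\alpha/k$ alone. Since complementation is a bijection of $\Pi_n$ carrying $\tau$-avoiders onto $\tau^c$-avoiders, we have $|\Pi_n(\rho)|=|\Pi_n(\rho^c)|$ for every pattern $\rho$. A direct check shows that the composition of the complementation on $[k-1]$ with that on $[k]$ is the shift $i\mapsto i+1$, so $(\alpha^c/k)^c=1/\alpha'$; hence $|\Pi_n(1/\alpha')|=|\Pi_n(\alpha^c/k)|$. Because $\sigma_{k-1}$ is self-complementary and $|\Pi_n(\alpha^c)|=|\Pi_n(\alpha)|$, the pattern $\gamma=\alpha^c$ again satisfies the hypothesis, so the $1/\alpha'$ inequality follows from the $\gamma/k$ inequality applied to $\gamma=\alpha^c$. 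Thus it suffices to prove $|\Pi_n(\alpha/k)|<|\Pi_n(\sigma_k)|$ for $n>k$. I would also record the elementary fact that $\Pi_n(\sigma_k)$ is exactly the set of partitions of $[n]$ with at most $k-1$ blocks, since containing $\sigma_k$ means having $k$ pairwise distinct blocks.

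The hypothesis furnishes, for each $m$, an injection $\iota_m\colon\Pi_m(\alpha)\hookrightarrow\Pi_m(\sigma_{k-1})$: for $m>k-1$ this comes from the strict inequality $|\Pi_m(\alpha)|<|\Pi_m(\sigma_{k-1})|$ and is not surjective, while for $m\le k-1$ the two sets have equal cardinality (each omits at most one partition of $[m]$), so an injection exists there as well. I fix such a family once and for all. The crux observation is this: if $\pi\vdash[n]$ avoids $\alpha/k$ and $B$ is the block of $\pi$ containing $n$, then the restriction $\pi|_{[n]\setminus B}$ avoids $\alpha$. Indeed, an $\alpha$-pattern inside $[n]\setminus B$ together with $n$ — which exceeds all of it and lies in the disjoint block $B$ — would standardize to $\alpha/k$.

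Using this I would define $\Phi\colon\Pi_n(\alpha/k)\to\Pi_n(\sigma_k)$ by leaving the block $B$ of $n$ untouched and replacing $\pi|_{[n]\setminus B}$ by the de-standardization of $\iota_m(\operatorname{std}(\pi|_{[n]\setminus B}))$, where $m=|[n]\setminus B|$. The result has at most $(k-2)+1=k-1$ blocks, so it lands in $\Pi_n(\sigma_k)$. The map is injective because $B$ is recoverable from $\Phi(\pi)$ as the block containing $n$ (the replaced portion lives on $[n]\setminus B$ and never touches $n$); once $B$ is known one standardizes the remainder and applies $\iota_m^{-1}$ on its image to recover $\pi$. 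Finally, for $n>k$ the map $\iota_{n-1}$ is not surjective, so I can choose $\rho\vdash[n-1]$ with at most $k-2$ blocks lying outside its image; then $\rho$ together with the singleton block $\{n\}$ is an element of $\Pi_n(\sigma_k)$ not in the image of $\Phi$, because its block of $n$ is $\{n\}$, which would force the remainder to lie in the image of $\iota_{n-1}$. This exhibits $\Phi$ as a non-surjective injection and yields the strict inequality.

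The main obstacle here is conceptual rather than computational: one must recognize that $\alpha/k$-avoidance is controlled entirely by the single block containing the maximum $n$, which is exactly what permits the level-$(k-1)$ hypothesis to be transplanted through $\iota_m$ on the complementary block structure. The remaining points to verify with care are that the replacement genuinely yields a partition of $[n]$ with at most $k-1$ blocks, and that recovering $B$ as the block of $n$ makes $\Phi$ invertible on its image; both are routine once the crux observation is in place. Note that $\iota$ is used only as a black box, so nothing beyond the stated cardinality hypothesis and the count-invariance under complementation is required.
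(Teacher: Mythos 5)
Your proof is correct and is essentially the paper's argument viewed through complementation: the paper reduces to the $1/\alpha'$ case and strips the block containing $1$, applying the hypothesized non-surjective injection to the remaining blocks, while you reduce to the $\alpha/k$ case and strip the block containing $n$. The only minor difference is the treatment of small remainders $m\le k-1$, which the paper handles with the identity map plus one exceptional substitution and you handle by observing $|\Pi_m(\alpha)|=|\Pi_m(\sigma_{k-1})|$ there; both work.
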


\begin{proof}
Proving the inequality on the left is sufficient, since the inequality on the right then follows by complementation.   

To do this, fix $n>k$ and assume there exists nonsurjective injections $\phi_m :\Pi_m(\alpha)\to \Pi_m(\sigma_{k-1})$ for all $m> k-1$. Observe that for any $B_1/B_2/\cdots/ B_m\in \Pi_n(1/ \alpha')$, in standard form,  the set partition $B_2/\cdots /B_m$ avoids $\alpha$ as $1\in B_1$.  Therefore we obtain a nonsurjective injection $\psi$ from $\Pi_n(1/\alpha')$ to $\Pi_n(\sigma_k)$ as follows: If $n-|B_1| > k-1$ we let
	$$\psi(B_1/B_2/ \cdots/ B_m) =  B_1/ \phi_{n-|B_1|}(B_2 / \cdots/ B_m).$$
	(To be precise $\phi_{n-|B_1|}(B_2 / \cdots/ B_m)$ is obtained by first standardizing the partition $B_2/\cdots/B_m$, then applying $\phi_{n-|B_1|}$, and then incrementing the values.) If $n-|B_1| \leq k-1$ we let 
	$$\psi(B_1/B_2/ \cdots/ B_m) = B_1/B_2/ \cdots/ B_m,$$
	unless $B_2/ \cdots/ B_m$ consists of exactly $k-1$ singleton blocks whose union is $[n]\setminus B_1$.  In this exceptional case we set $\psi(B_1/B_2/ \cdots/ B_m)$ to be the partition $B_1/A$ where $A$ is the partition of $[n]\setminus B_1$ whose standardization is $\alpha$. 

	The existence of this injection proves our claim.

\end{proof}

To prove our next theorem, we first recall the bijection between restricted growth functions and set partitions as described in the Introduction.   Next, define $\rgf_n^{<k}$ to be the set of all $w\in \rgf$ in the letters $\{1,\ldots, k-1\}$ with length $n$.  It immediately follows that the standard bijection between $\rgf$ and set partitions restricts to a bijection between $\rgf_n^{<k}$ and $\Pi_n(\sigma_k)$.  

\begin{theorem}\label{thm:delta_lessthan_sigma}
	Let $\delta$ be a pattern of length $k\geq 4$ that consists of all singletons except for exactly one doubleton.  Then 
	$$|\Pi_n(\delta)| < |\Pi_n(\sigma_k)|$$
	for all $n>k\geq 3$.  Consequently, $\delta\prec \sigma_k$.  
\end{theorem}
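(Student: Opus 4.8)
The plan is to prove this by induction on $k$, using Lemma~\ref{lem:induction} as the inductive engine. The base case $k=4$ is established by direct calculation: there is (up to complementation) essentially one pattern $\delta$ of length $4$ with exactly one doubleton, namely a partition whose RGF-type structure places the doubleton in some position, and one checks $|\Pi_n(\delta)| < |\Pi_n(\sigma_4)|$ for all $n>4$ by explicit enumeration or generating functions (the paper enumerates all $\tau \vdash [4]$ in Section~\ref{sec:enum}, so these counts are available). The inductive step is where Lemma~\ref{lem:induction} does the work: I would argue that every pattern $\delta$ of length $k$ consisting of singletons and exactly one doubleton can be written, after possibly applying complementation, in one of the two forms $1/\alpha'$ or $\alpha/k$, where $\alpha \vdash [k-1]$ is itself a pattern consisting of singletons and exactly one doubleton (of length $k-1$). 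Then the inductive hypothesis $|\Pi_n(\alpha)| < |\Pi_n(\sigma_{k-1})|$ together with Lemma~\ref{lem:induction} immediately yields $|\Pi_n(\delta)| < |\Pi_n(\sigma_k)|$.

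The key structural observation making the induction go through is how the doubleton and the singletons interact with the smallest element $1$ and the largest element $k$. Write $\delta = D/S_1/\cdots/S_{k-2}$ where $D$ is the doubleton and the $S_i$ are singletons. I would split into cases according to whether $1$ and $k$ belong to $D$ or to singleton blocks. If $1$ is a singleton in $\delta$, then deleting the block $\{1\}$ and standardizing produces a pattern $\alpha \vdash [k-1]$ that still has exactly one doubleton and all else singletons, and $\delta = 1/\alpha'$ in the notation of the lemma; the inductive hypothesis applies to $\alpha$. Symmetrically, if $k$ is a singleton, deleting $\{k\}$ gives $\delta = \alpha/k$. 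The only remaining case is when both $1$ and $k$ lie in blocks that cannot be stripped off as required — but if $1 \in D$ we may pass to the complement $\delta^c$, under which $1$ becomes $k$; and since Wilf-type counts are preserved under complementation ($|\Pi_n(\delta)| = |\Pi_n(\delta^c)|$), it suffices to prove the inequality for $\delta^c$. Thus at least one of $1$ or $k$ (after possibly complementing) is a singleton, and the stripping argument applies.

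The main obstacle I anticipate is verifying that in \emph{every} configuration at least one endpoint element can be peeled off to leave a valid length-$(k-1)$ pattern of the same type, and handling the degenerate sub-cases cleanly. The delicate point is the doubleton itself: if $D = \{1,k\}$ then neither $1$ nor $k$ is a singleton, and neither stripping operation directly produces a pattern with a doubleton — deleting $1$ would leave $k$ as a would-be singleton but the other element of the former doubleton now becomes part of no pair. I would handle this by checking that $D=\{1,k\}$ (or more generally $D$ containing an endpoint) still reduces correctly: deleting the singleton-free endpoint requires care, and one may instead need to invoke complementation to move the doubleton off the endpoints, or treat $D = \{1,k\}$ as a genuinely separate small case. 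Once the reduction to $1/\alpha'$ or $\alpha/k$ is secured in all cases, the rest is a mechanical application of Lemma~\ref{lem:induction} and the inductive hypothesis, with the strict inequality propagating because the injections supplied by the lemma are nonsurjective.
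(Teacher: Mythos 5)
Your overall strategy --- peel off an endpoint singleton via Lemma~\ref{lem:induction} and induct on $k$ --- is exactly the paper's reduction, but the proposal has a genuine gap at precisely the point you flag as ``delicate,'' and the fix you suggest does not work. When the doubleton is $D=\{a,b\}$, stripping $\{1\}$ or $\{k\}$ preserves the difference $b-a$, so the recursion does not terminate in a single base case at $k=4$: for each $d=b-a\geq 3$ it bottoms out at the pattern $1k/2/3/\cdots/(k-1)$ of length $k=d+1$, in which neither endpoint is a singleton. Complementation cannot move the doubleton off the endpoints here, because $\{1,k\}$ is fixed by $i\mapsto k+1-i$; this pattern is self-complementary. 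So what you propose to treat as ``a genuinely separate small case'' is in fact an infinite family, one irreducible pattern for every $k\geq 4$, and handling it is the bulk of the paper's proof: avoiders of $1k/2/\cdots/(k-1)$ are encoded as words in a set $R_n^{<k}$ (recording whether $n$ goes into a new singleton or into one of the last $k-2$ existing blocks), this encoding is shown to be injective but not surjective, and then $|R_n^{<k}|=|\rgf_n^{<k}|=|\Pi_n(\sigma_k)|$ is established by an explicit bijection with restricted growth functions on the letters $\{1,\ldots,k-1\}$. None of that is supplied, or even gestured at, by your proposal.

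There is a second, smaller issue with where the induction bottoms out. For $d=1$ you can descend to length $3$ and use $|\Pi_n(12/3)|=1+\binom{n}{2}<2^{n-1}=|\Pi_n(\sigma_3)|$, but for $d=2$ the natural bottom is $13/2\vdash[3]$, where the inequality degenerates to an equality since $13/2\sim 1/2/3$; Lemma~\ref{lem:induction} requires strict inequality as its hypothesis, so the descent must stop at $k=4$, and the pattern $1/24/3$ (equivalently $13/2/4$) needs its own argument. The paper supplies this as Lemma~\ref{lem:1_24_3_less_singleton}, using the recursive structure of $1/24/3$-avoiders developed in Section~\ref{sec:1_24_3}. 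Your plan to ``check the $k=4$ base case by explicit enumeration'' could in principle cover this one pattern, but as written it is an unverified comparison of a rational generating function against $|\Pi_n(\sigma_4)|$ for all $n>4$, and in any event it does nothing for the $d\geq 3$ family above.
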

\begin{proof}
If $a$ and $b$ are the elements of the doubleton block of $\delta$, we consider cases depending on the value of $|a-b|$.  

If $|a-b|=1$, then since $k\geq 4$ we see by using Lemma~\ref{lem:induction} that it suffices to show that $|\Pi_n(\alpha)| < |\Pi_n(\sigma_4)|$ where $n> 4$ and $\alpha$ is one of $12/3/4$, $1/23/4$, or $1/2/34$.  By Lemma~\ref{lem:induction} again, it then suffices to show that $|\Pi_n(\beta)|< |\Pi_n(\sigma_3)|$ where $n> 3$ and $\beta$ is one of $12/3$ or $1/23$.  For either choice of $\beta$ we have $|\Pi_n(\beta)| = 1+ \binom{n}{2}$ and $|\Pi_n(\sigma_3)| = 2^{n-1}$ by Theorem 2.5 of (\cite{Sagan:2010aa}), and this concludes the proof when $|a-b| =1$.  

If $|a-b|=2$, then we see by using Lemma~\ref{lem:induction} that it suffices to show that $|\Pi_n(\alpha)|< |\Pi_n(\sigma_4)|$ when $n>4$ and $\alpha$ is one of $13/2/4$ or $1/3/24$.  Since $13/2/4$ is obtained from $1/3/24$ by complementation it suffices to deal with $1/3/24$.  We do so in Lemma~\ref{lem:1_24_3_less_singleton}, following the enumeration of $|\Pi_n(1/3/24)|$.  

To deal with the case $|a-b|\geq 3$, it suffices, by Lemma~\ref{lem:induction}, to prove the result for $\delta = 1k/2/3/\cdots / k-1$.  For such a $\delta$, we see that every set partition in $\Pi_n(\delta)$ is obtained from a unique set partition $B_1/\cdots /B_m\in \Pi_{n-1}(\delta)$ by inserting $n$ into one of the $k-2$ rightmost blocks $B_{m-(k-3)}, \ldots, B_m$ or by inserting $n$ into a new singleton block.    We encode these insertion choices as follows:
\begin{center}
$\begin{array}{ccl}
	1&\leftrightarrow & \mbox{ into a new singleton block}\\
	2&\leftrightarrow & \mbox{ into } B_m\\
	3&\leftrightarrow & \mbox{ into } B_{m-1}\\
	&\vdots &\\
	k-1&\leftrightarrow & \mbox{ into } B_{m-(k-3)}.
\end{array}$	
\end{center}	
Now define the set $R_n^{<k}$ to be the set of all words $w$ in the letters $1,2,\ldots, k-1$ such that $w_1 = 1$ and 
$$w_s \leq 1+ \#\mbox{ of 1's in the subword } w_1\cdots w_{s-1}.$$
Using the above encoding, it is clear that we have an injective mapping from $\Pi_n(\delta)$ into the set $R_n^{<k}$.  This mapping is not surjective for $n>k\geq 4$ since then the set partition  
$$1(k-1)/2(k+1)/3/\cdots /k-2/k/k+2/\cdots / n\notin \Pi_{n}(\delta)$$ 
is mapped, under the above encoding, to the word 
$$\underbrace{11\cdots 1}_{k-2} (k-1) 1 (k-1)\underbrace{11\cdots 1}_{n-k-1} \in R_{k+1}^{<k}.$$

It now suffices to prove that $|\rgf_n^{<k}|=|R_n^{<k}|$ since $|\rgf_n^{<k}| = |\Pi_n(\sigma_k)|$ as mentioned above.  To see this consider a word $w\in \rgf_n^{<k}$ and decompose it according to the first occurrence, from left to right, of each letter.  Doing so $w$ decomposes into the subwords
\begin{center}
\begin{tikzpicture}
		\draw (0,0)rectangle (.5,.5) node[pos=.5] {\large \textbf{1}};
		\draw (.6,0) rectangle (3,.5) node[pos = .5] {1's};
		\draw (3.2,0)rectangle (3.7,.5) node[pos=.5] {\large \textbf{2}};
		\draw (3.9,0) rectangle (6.6,.5) node[pos = .5] {1,2's};
		\draw (6.8,0)rectangle (7.3,.5) node[pos=.5] {\large \textbf{3}};
		\draw (7.5,0) rectangle (10,.5) node[pos = .5] {1,2,3's};
		\node at (10.7,.2) {$\cdots$};
		
		\draw (11.5,0)rectangle (12,.5) node[pos=.5] {\large \textbf{m}};
		\draw (12.2,0) rectangle (14.7,.5) node[pos = .5] {$1,2,\ldots, m's$};
		
		\draw [thick, decoration={
        brace,
        mirror,
        raise=0.1cm}, decorate] (.6,0) -- (3,0) 
node [pos=0.5,anchor=north,yshift=-0.2cm] {$u_1$};

\draw [thick, decoration={
        brace,
        mirror,
        raise=0.1cm}, decorate] (3.9,0) -- (6.6,0) 
node [pos=0.5,anchor=north,yshift=-0.2cm] {$u_2$};

\draw [thick, decoration={
        brace,
        mirror,
        raise=0.1cm}, decorate] (7.5,0) -- (10,0) 
node [pos=0.5,anchor=north,yshift=-0.2cm] {$u_3$};

\draw [thick, decoration={
        brace,
        mirror,
        raise=0.1cm}, decorate] (12.2,0) -- (14.7,0) 
node [pos=0.5,anchor=north,yshift=-0.2cm] {$u_m$};
		
	\end{tikzpicture}\ \raisebox{18pt}{.}
\end{center} 

As $w\in\rgf_n^{<k}$ we know that $m\leq k-1$.  In the case that $m<k-1$ we define $w'$ to be the new word given by incrementing each of the subwords $u_i$ by 1 and replacing each letter's first occurrence by a 1.  So $w'$ decomposes, according to its occurrences of 1's, as 
\begin{center}
\begin{tikzpicture}
		\draw (0,0)rectangle (.5,.5) node[pos=.5] {\large \textbf{1}};
		\draw (.6,0) rectangle (3,.5) node[pos = .5] {$u_1+1$};
		\draw (3.2,0)rectangle (3.7,.5) node[pos=.5] {\large \textbf{1}};
		\draw (3.9,0) rectangle (6.6,.5) node[pos = .5] {$u_2+1$};
		\draw (6.8,0)rectangle (7.3,.5) node[pos=.5] {\large \textbf{1}};
		\draw (7.5,0) rectangle (10,.5) node[pos = .5] {$u_3+1$};
		\node at (10.7,.2) {$\cdots$};
		\draw (11.5,0)rectangle (12,.5) node[pos=.5] {\large \textbf{1}};
		\draw (12.2,0) rectangle (14.7,.5) node[pos = .5] {$u_m+1$};
	\end{tikzpicture}\ \raisebox{1pt}{.}
\end{center} 
It easily follows that $w'\in R_n^{<k}$ since the letters in $u_i+1$  are at most $i+1$ and are preceded by exactly $i$ occurrences of the letter 1. 

In the case that $m=k-1$, we modify this mapping only slightly.  Here we define $w'$ to be the word
\begin{center}
\begin{tikzpicture}
		\draw (0,0)rectangle (.4,.5) node[pos=.5] {\large \textbf{1}};
		\draw (.6,0) rectangle (2.4,.5) node[pos = .5] {$u_1+1$};

		\draw (2.6,0)rectangle (2.9,.5) node[pos=.5] {\large \textbf{1}};
		\draw (3.1,0) rectangle (5,.5) node[pos = .5] {$u_2+1$};
		
		\node at (7,.2) {$\cdots$};
		
		\draw (8.5,0) rectangle (8.9,.5) node[pos=.5] {\large \textbf{1}};
		\draw (9.1,0) rectangle (11.4,.5) node[pos = .5] {$u_{k-2}+1$};

		\draw (11.6,0)rectangle (11.9,.5) node[pos=.5] {\large \textbf{1}};
		\draw (12.1,0) rectangle (14,.5) node[pos = .5] {$u_{k-1}$};
	\end{tikzpicture}\ \raisebox{1pt}{,}
\end{center} 
so that all subwords, except the last one, $u_{k-1}$, are incremented by 1. Note this second case is needed since the letter $k-1$ might occur in $u_{k-1}$ and the letter $k$ is not available. Again it is clear that $w'\in R_n^{<k}$.  In this way we obtain an injective mapping from $\rgf_n^{<k}$ to $R_n^{<k}$.  Furthermore, this mapping is easily seen to be surjective as we can decompose any word $v$ in $R_n^{<k}$  according to its first $k-1$ ones and then reverse the above mapping.  It only remains to prove that applying the reverse mapping to any $v\in R_n^{<k}$ results in a word $w\in \rgf_n^{<k}$.  
This easily follows from the observation that if $v_s$ is a letter in $v$ with $j$ ones to its left, then 
$$v_s\leq j+1 = \max(w_1,\ldots, w_{s-1}) +1.$$
First assume  $j<k-1$.  In this case, if $v_s\neq 1$, then $w_s=v_s-1$ and if $v_s =1$, then $w_s = j+1$.  Either way, such letters satisfy the condition of a restricted growth function.  The case when $j\geq k-1$ is similar.  Its details are left to the reader.

\end{proof}

\begin{theorem}\label{thm:sigma_less_beta}
	For all $k\geq 2$ and $n\geq 1$
	$$|\Pi_n(\sigma_k)| \leq |\Pi_n(\beta_k)|.$$ 
	Moreover, this inequality is strict provided $n>k>2$.  
\end{theorem}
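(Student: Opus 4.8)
The plan is to reduce everything to the two transparent descriptions of the sets involved. Since $\beta_k$ is a single block of size $k$, a partition lies in $\Pi_n(\beta_k)$ precisely when each of its blocks has fewer than $k$ elements; and since $\sigma_k$ consists of $k$ singletons, a partition lies in $\Pi_n(\sigma_k)$ precisely when it has fewer than $k$ blocks. (The latter is exactly the remark recorded before the theorem, that $\rgf$ restricts to a bijection between $\rgf_n^{<k}$ and $\Pi_n(\sigma_k)$.) Passing to complements inside $\Pi_n$, the inequality $|\Pi_n(\sigma_k)|\le|\Pi_n(\beta_k)|$ is equivalent to
$$\#\{\pi\vdash[n]:\pi\text{ has a block of size}\ge k\}\ \le\ \#\{\pi\vdash[n]:\pi\text{ has at least }k\text{ blocks}\},$$
and strictness of one statement is strictness of the other. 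I would prove this reformulated inequality by an injection $\varphi$ from the left-hand family $X$ into the right-hand family $Y$.

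On the overlap $X\cap Y$ (a block of size $\ge k$ present \emph{and} at least $k$ blocks present) I would set $\varphi=\mathrm{id}$, which already lands in $Y$. It then remains to inject $X\setminus Y$ — the partitions with at most $k-1$ blocks, at least one of which has size $\ge k$ — into $Y\setminus X$ — the partitions with at least $k$ blocks, all of size $<k$. Because $Y\setminus X$ is disjoint from $X$, no image of the identity part can collide with an image coming from $X\setminus Y$. For $\pi\in X\setminus Y$ I would choose a canonical oversized block (say the largest, ties broken by least minimum) and refine the oversized blocks of $\pi$ into consecutive chunks of size $<k$, creating enough new blocks to push the total past $k$; the resulting partition then lies in $Y\setminus X$.

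The main obstacle is injectivity, i.e.\ reversibility of this refinement: from the refined partition one must be able to recognize which chunks arose from a common block of $\pi$ and reglue them, \emph{and} one must never confuse a freshly created block with a block already present in $\pi$. A naive ``fill a block to capacity, then open a new block'' rule fails here, since an overflow block and a genuinely new small block become indistinguishable — this already happens at $n=k=3$, where two distinct elements of $\Pi_3(\sigma_3)$ collapse to $\{1,2\}/\{3\}$. I would resolve this by fixing a deterministic splitting/merging protocol in which the cut points are read off from the relative order of the elements, so that the chunks of a single block form a chain that the reverse map can detect unambiguously, and then verifying directly that applying the merge rule to $\varphi(\pi)$ returns $\pi$. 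This verification, rather than the easy size bookkeeping, is where the real content of the argument lies.

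Finally, strictness for $n>k>2$ amounts to showing $\varphi$ is not onto, which I would do by exhibiting, for each such $n$, a single partition in $Y\setminus X$ (at least $k$ blocks, all of size $<k$) whose block structure cannot be produced by splitting any partition with fewer than $k$ blocks under the chosen protocol; the hypotheses $n>k$ and $k>2$ are exactly what leave enough room to build such a witness. The remaining cases are immediate from the characterizations above: for $n\le k$ both counts equal $|\Pi_n|$ or $|\Pi_n|-1$, so equality holds, and for $k\le 2$ both counts equal $1$, so the inequality is never strict.
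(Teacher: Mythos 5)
Your complementation step is sound, and the decomposition of $\varphi$ into the identity on $X\cap Y$ plus an injection $X\setminus Y\to Y\setminus X$ is logically correct. But the proposal stops exactly where the work begins: the ``deterministic splitting/merging protocol'' that is supposed to make the refinement reversible is never constructed, and you yourself flag its verification as where the real content lies. Two concrete difficulties are left unresolved. First, cutting an oversized block into consecutive chunks of size $<k$ does not automatically ``push the total past $k$'': a partition of $[k+1]$ consisting of one block splits into as few as two chunks, so to land in $Y\setminus X$ the protocol must sometimes cut a block into far more pieces than the size constraint forces, and the number of pieces must depend on how many blocks the source already has. Second, and more seriously, the reverse map must recognize which small blocks of $\varphi(\pi)$ came from a common oversized block and which were already blocks of $\pi$; the untouched blocks can have arbitrary sizes $<k$ and arbitrary interleaving with the chunks, so ``reading cut points off the relative order'' is not obviously well defined, and no candidate rule is given. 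As written, the argument establishes the easy bookkeeping and defers the theorem.

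For contrast, the paper works with the original (uncomplemented) sets and inducts on $k$: given $\pi=B_1/\cdots/B_m\in\Pi_n(\sigma_{k+1})$, it writes $|B_1|=qk+r$, cuts $B_1$ into consecutive chunks $C_0$ (of size $r$, containing $1$) and $C_1,\dots,C_q$ (each of size exactly $k$), and applies the inductively given injection $\psi_k$ to the standardization of $B_2/\cdots/B_m$. The key device is that blocks of size exactly $k$ are permitted in $\Pi_n(\beta_{k+1})$ but cannot occur in the image of $\psi_k$ (whose blocks all have size $<k$), so the chunks of $B_1$ are marked by their size and $B_1$ is recovered as the union of all size-$k$ blocks together with the block containing $1$. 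Some such marking mechanism is precisely what your protocol is missing; without it, injectivity is not established. (Your closing remarks on the cases $n\le k$ and $k=2$ are correct.)
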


\begin{proof}
First, we show, by induction on $k$, that there exists a family of injections
	$$\psi_{k,n}: \Pi_n(\sigma_k) \to \Pi_n(\beta_k),$$
	for all $k\geq 2$ and $n\geq 1$. 
To streamline this notation we drop the subscript $n$ and write $\psi_k$ instead of $\psi_{k,n}$.  

To begin our induction argument note that $|\Pi_n(\sigma_2)| =1=|\Pi_n(\beta_2)|$. Now, consider $\pi = B_1/\cdots/B_m\in \Pi_n(\sigma_{k+1})$  and observe that  $B_2/\cdots/B_m$ can be considered a set partition of $[n-|B_1|]$ that avoids $\sigma_k$.  Therefore $\psi_k(B_2/\cdots/B_m)$ is well defined, avoids $\beta_k$, and has blocks of size $<k$.  (If $B_2/\cdots/B_m = \emptyset$ we set $\psi_k(\emptyset) = \emptyset$.) With this in mind, the division algorithm yields 
$$|B_1| = q\cdot k + r,$$ 
where $0\leq r\leq k-1$.  Provided $r>0$ let $C_0/C_1/C_2/\cdots/C_q$ be the set partition of $B_1$ where $C_0$ consists of the first $r$ numbers of $B_1$ and $C_1$ consists of the next $k$ numbers, etc.  Note that in the case  $r=0$ we simply ignore $C_0$.   Finally, we define 
	$$\psi_{k+1}(\pi)  = C_0/C_1/ \cdots/C_q/\psi_k(B_2/\cdots/B_m),$$
so that $C_1,\ldots, C_q$ are the only blocks of size $k$ in this partition.  Note that if $r>0$, then $1\in C_0$, otherwise $1\in C_1$.  Consequently, $\psi_{k+1}$ is injective since $\psi_k$ is injective and $B_1$ may be recovered by taking the union of all blocks of size $k$ in $\psi_{k+1}(\pi)$ along with the block containing 1.
	
It only remains to show that our mappings $\psi_{n,k}$ are not surjective when $n>k>2$.	To see this observe that in the construction of $\psi_{n,k}(\pi)$ the first block of $\pi$ (in standard form) is partitioned into consecutive segments as described above.  As a result, no partition in the image of $\psi_{n,k}$  could contain the blocks:
$$1\ 3/  2\ 4\ 5\ \cdots (k+1),$$
because both blocks would have to come from $B_1$, but this violates consecutiveness. Hence these mappings cannot be surjective.
\end{proof}

\begin{corollary}
	Let $\delta$ be any pattern of length $k$ consisting of all singletons except for one doubleton.  Then, 
	$$|\Pi_n(\delta)| < |\Pi_n(\sigma_k)| <|\Pi_n(\beta_k)|,$$
	for all $n>k\geq 4$.
\end{corollary}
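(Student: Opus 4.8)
The plan is to obtain both strict inequalities by directly chaining the two immediately preceding theorems, so that essentially no new argument is needed. For the left-hand inequality $|\Pi_n(\delta)| < |\Pi_n(\sigma_k)|$, I would invoke Theorem~\ref{thm:delta_lessthan_sigma}, which establishes precisely this for every pattern $\delta$ of length $k\geq 4$ consisting of all singletons except one doubleton, for all $n>k$. Since the hypothesis of the corollary matches the hypothesis of that theorem verbatim, nothing further is required for this half.

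For the right-hand inequality $|\Pi_n(\sigma_k)| < |\Pi_n(\beta_k)|$, I would appeal to the ``Moreover'' clause of Theorem~\ref{thm:sigma_less_beta}, which guarantees that the inequality $|\Pi_n(\sigma_k)|\leq|\Pi_n(\beta_k)|$ is strict whenever $n>k>2$. Under the corollary's standing assumption $n>k\geq 4$ this condition certainly holds, so the inequality is strict. Combining the two strict inequalities then yields $|\Pi_n(\delta)| < |\Pi_n(\sigma_k)| < |\Pi_n(\beta_k)|$ for all $n>k\geq 4$, which is exactly the claim.

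Since both inputs are already proved, there is no genuine obstacle here; the statement is a formal corollary. The only point that warrants a moment's care is the bookkeeping on parameter ranges, namely checking that the assumption $k\geq 4$ together with $n>k$ lies inside the validity window of both results, which are $n>k\geq 3$ for Theorem~\ref{thm:delta_lessthan_sigma} and $n>k>2$ for Theorem~\ref{thm:sigma_less_beta}. Both windows plainly contain the region $n>k\geq 4$, so the chaining is legitimate and the proof reduces to citing the two theorems in sequence.
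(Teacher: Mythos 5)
Your proposal is correct and matches the paper exactly: the authors state that the corollary is a direct consequence of Theorems~\ref{thm:delta_lessthan_sigma} and \ref{thm:sigma_less_beta}, which is precisely the chaining you describe. Your additional check that the parameter range $n>k\geq 4$ sits inside both theorems' validity windows is the only substantive point, and it is handled correctly.
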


The proof of this corollary is a direct consequence of the previous two theorems.

\section{Enumeration}\label{sec:enum}
In this section we concentrate on the enumeration of $|\Pi_n(\tau)|$ for patterns $\tau\vdash [4]$.  As enumerations for the patterns $1/2/3/\cdots/k$ and $\beta_k=12\cdots k$ are well known using exponential generating functions and Sagan in~\cite{Sagan:2010aa} enumerated the general pattern $12/3/4/\cdots/k$, we concentrate on all others with one exception.  As the enumeration for the pattern $1/23/4$  devolves into  numerous (uninteresting) cases and Klazar in~\cite{Klazar:Count2000} showed that the resulting generating function is rational, we choose to omit this pattern from our discussion.  

To summarize the enumerations established in the following subsections (as well as those mentioned above) we include the following table of results where $\displaystyle\exp_m =\sum_{i=0}^m\frac{x^i}{i!}$ and 
$S(n,k)$ denotes the Stirling numbers of the second kind and  
$\displaystyle G(z) = \frac{z-2z^2(1+z)-z\sqrt{1-4z^2}}{-2+2z(1+z)^2}$. Aside from the omitted pattern $1/23/4$, every pattern of length 4 is, up to complementation, included in the table.
\begin{center}
{\tabulinesep=1.4mm
\begin{tabu}{c|c|c}
	Pattern&Enumeration&Reference\\
	\hline
	\hline
	$1234$ & $\displaystyle\exp(\exp_{3}(z)-1)$& -\\

	$1/2/3/4$ & $\displaystyle\exp_{3}(e^z-1)$& -\\
	$12/3/4$&$\displaystyle 1+\sum_{k=1}^{n-1}\sum_{j=1}^{m-2}S(n-k,j)\sum_{i=1}^j \binom{j-1}{i-1}(k)_i$&\cite{Sagan:2010aa}\\
		$12/34$&$\displaystyle\sum_{k=0}^{\lfloor n/2\rfloor} k!{n \choose 2k}+\sum_{\ell=3}^{n-2k}{n\choose 2k+\ell}k!(k+1)^2$&(\ref{sec:12_34})\\

	$1/234$&$\displaystyle \sum_{\ell=1}^{n} M(n-\ell) + \sum_{\ell=1}^{n-2} (n-\ell-1)M(n-\ell-1)+\sum_{\ell=1}^{n-3} {n-\ell-1 \choose 2}M(n-\ell-2)$ &(\ref{sec:1_234})\\
	$134/2$&$\displaystyle 1+ \sum_{k=1}^{\lfloor n/2 \rfloor}\sum_{f=0}^{n-2k} {n-f-k-1\choose k-1}{2k+f\choose f}(2k)!!$&(\ref{sec:134_2})\\

	$14/23$&$\displaystyle G\left(\frac{z}{1-z}\right)\frac{1}{1-z} + \frac{1}{1-z}$&(\ref{sec:14_23})\\
	$13/24$&$\displaystyle \frac{1-\sqrt{1-4z}}{2z}$&-\\
	
	$14/2/3$ & $\displaystyle\frac{z-3z^2+3z^3}{1-5z+8z^2-5z^3}$& (\ref{sec:14_2_3})\\

	$1/24/3$&$\displaystyle \frac{z-4z^2+6z^3-2z^4}{(1-3z+2z^2)^2}$&(\ref{sec:1_24_3})

\end{tabu} }
\end{center}

\subsection{The pattern $14/2/3$}\label{sec:14_2_3}
Let us begin by recalling the standard recursive construction for building set partitions.  That is, every set partition in $\Pi_n$ is obtained by taking a set partition in $\Pi_{n-1}$ and either adding $n$ to an existing block or appending $n$ as a singleton.  To enumerate $\Pi_n(14/2/3)$ we consider a refinement of this recursive construction.  First, observe that if $\pi \in \Pi_n(14/2/3)$, where in standard form $\pi = B_1/\cdots /B_m$, we have either $n\in B_{m-1}$ or $n\in B_m$, as any other choice would force an occurrence of the pattern $14/2/3$.  From this observation we may immediately restrict our attention to set partitions which are built by recursively placing the next largest element into either a singleton block, the last block, or the second to last block.  To formalize this refinement let us define $W_n$ to be the subset of all words in the letters \textbf{a}, \textbf{b}, \textbf{c}, that start with an \textbf{a} and have the property that any occurrence of the letter \textbf{c} must be preceded by at least two \textbf{a}'s. Using this we obtain an injection
$$\varphi:W_n\to \Pi_n$$
which is defined recursively as follows. First, set $\varphi(\textbf{a}) = 1$.  Then, for any $w\in W_n$, define $\varphi(w)$ to be the set partition obtained from $\pi'=\varphi(w_1\cdots w_{n-1})$ by appending $n$ as a singleton if $w_n = \textbf{a}$, inserting $n$ into the (existing) rightmost block  of $\pi'$ if $w_n = \textbf{b}$, or inserting $n$ into the second to last block of $\pi'$ if $w_n = \textbf{c}$.  Note that our insistence that words in $W_n$ have the property that any \textbf{c} is preceded by at least two \textbf{a}'s guarantees that if $w_n=\textbf{c}$, then $\pi'$ contains at least two blocks.  

It is clear for our definition that $\varphi$ is injective.  Additionally, it follows from our first observation in this subsection that 
$$\Pi_n(14/2/3)\subseteq \varphi(W_n).$$
Before continuing we next provide an example of this construction in Example~\ref{ex:phi14_2_3}.

\begin{example}\label{ex:phi14_2_3}
	If $w = \textbf{aaccba}$, then 
	$$\varphi(w) = 134/25/6,$$ 
	and if $v = \textbf{aacabc}$, then  
		$$\varphi(v) = 13/26/45$$
	which is not $14/2/3$-avoiding.  
\end{example}
An immediate consequence of Example~\ref{ex:phi14_2_3}, is that $\Pi_n(14/2/3)\subsetneq \varphi(W_n)$. Consequently, we seek a subset of $W_n$ whose image under $\varphi$ is precisely $\Pi_n(14/2/3)$. To this end consider the subset $W_n^*$ consisting of all $w\in W_n$ with the property that the letters between any two \textbf{c}'s do not contain exactly one \textbf{a}.  Observe that in Example~\ref{ex:phi14_2_3} $w\in W_n^*$ but $v$ is not.  

\begin{lemma}
	The restricted mapping $\varphi:W_n^*\to \Pi_n(14/2/3)$ is a bijection.  
\end{lemma}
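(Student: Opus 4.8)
The plan is to establish the single equivalence
$$w\in W_n^*\iff \varphi(w)\text{ avoids }14/2/3\qquad(w\in W_n),$$
from which the lemma follows quickly. Indeed, $\varphi$ is already known to be injective, so its restriction to $W_n^*$ is injective; the implication ``$\Rightarrow$'' shows $\varphi(W_n^*)\subseteq\Pi_n(14/2/3)$, so the restricted map is well defined; and the implication ``$\Leftarrow$'', combined with the already-established containment $\Pi_n(14/2/3)\subseteq\varphi(W_n)$, shows that every avoiding partition has a preimage lying in $W_n^*$, giving surjectivity onto $\Pi_n(14/2/3)$.

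The structural fact driving everything is the \emph{two-phase life} of a block. Reading $w=w_1\cdots w_n$ left to right, the letter $w_i$ governs the insertion of the element $i$, and each $\textbf{a}$ creates a new block; write $B_j$ for the $j$th block created and $m_j$ for its minimum. Since a $\textbf{b}$ grows the current last block and a $\textbf{c}$ grows the current second-to-last block, the block $B_j$ can acquire elements only (i) while it is the last block, i.e.\ from the $\textbf{b}$'s occurring between the $j$th and $(j+1)$st $\textbf{a}$, and (ii) while it is the second-to-last block, i.e.\ from the $\textbf{c}$'s occurring between the $(j+1)$st and $(j+2)$nd $\textbf{a}$; afterward $B_j$ is frozen. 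I would record the resulting inequalities: every element of $B_t$ with $t\le j-2$ is smaller than $m_j$, the phase-(i) elements of $B_j$ are smaller than $m_{j+1}$, and its phase-(ii) elements exceed $m_{j+1}$.

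For ``$\Leftarrow$'' I argue the contrapositive. If $w\notin W_n^*$ there are two $\textbf{c}$'s, at positions $p<q$, with exactly one $\textbf{a}$ strictly between them. Letting $j$ be the number of $\textbf{a}$'s up to $p$, the first $\textbf{c}$ places $p$ into $B_{j-1}$, the intervening $\textbf{a}$ at some position $r$ creates $B_{j+1}$ (so $m_{j+1}=r$), and the second $\textbf{c}$ places $q$ into $B_j$. Since $m_j<p<r<q$ with $m_j,q\in B_j$ and $p\in B_{j-1}$, $r\in B_{j+1}$ lying in blocks distinct from each other and from $B_j$, the four elements standardize to $14/2/3$. (Here $j\ge 2$ because a $\textbf{c}$ requires two preceding $\textbf{a}$'s, so $B_{j-1}$ exists.)

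The implication ``$\Rightarrow$''—containment forces $w\notin W_n^*$—is where the real work lies, and is the step I expect to be the main obstacle. Suppose $\varphi(w)$ contains $14/2/3$ on elements $x_1<x_2<x_3<x_4$, with $x_1,x_4$ in a common block $B$ and $x_2,x_3$ in two further, distinct blocks. As $x_4$ is not the minimum of $B$, we have $w_{x_4}\in\{\textbf{b},\textbf{c}\}$, and the two-phase facts pin down which blocks can contain an element that is simultaneously $>x_1\ (\ge \min B)$ and $<x_4$. If $w_{x_4}=\textbf{b}$, so $B$ is the last block when $x_4$ is inserted, I would show the only candidate is the current second-to-last block, forcing $x_2,x_3$ into one block—a contradiction; hence $w_{x_4}=\textbf{c}$. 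Writing $B=B_j$, the analysis leaves exactly the two candidates $B_{j-1}$ and $B_{j+1}$, so one separator $y$ lies in $B_{j-1}$ with $y>m_j$; this forces $w_y=\textbf{c}$ with exactly $j$ preceding $\textbf{a}$'s, while $w_{x_4}=\textbf{c}$ has exactly $j+1$ preceding $\textbf{a}$'s. Thus $y$ and $x_4$ are two $\textbf{c}$'s with precisely one $\textbf{a}$ between them, i.e.\ $w\notin W_n^*$. The delicate points, which I would dispatch using the recorded two-phase inequalities, are the exhaustive verification that no block other than $B_{j-1},B_{j+1}$ can host a separator and the bookkeeping showing the $\textbf{a}$-count strictly between $y$ and $x_4$ is exactly one.
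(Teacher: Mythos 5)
Your argument is correct and follows essentially the same route as the paper: your ``$\Leftarrow$'' direction is the paper's proof that $\Pi_n(14/2/3)\subseteq\varphi(W_n^*)$ (two \textbf{c}'s separated by one \textbf{a} produce an occurrence across $B_{j-1},B_j,B_{j+1}$), and your ``two-phase'' inequalities are exactly the paper's observation that $\min(B_i)>\max(B_j)$ for $j+1<i$, which it likewise uses to confine any occurrence of $14/2/3$ to three consecutive blocks with the $1$ and $4$ in the middle one and to read off the two \textbf{c}'s with a single \textbf{a} between them. The ``delicate points'' you flag do go through directly from the inequalities you recorded.
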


\begin{proof}
	We first show that $\Pi_n(14/2/3)\subseteq\varphi(W_n^*)$.  To this end consider $w\in W_n\setminus W_n^*$ and let $i<j<k$ be such that $w_i=w_k=\textbf{c}$ and $w_j=\textbf{a}$ is the only occurrence of the letter $\textbf{a}$ between these two \textbf{c}'s. Next, set 
	$$\varphi(w) = B_1/\cdots /B_m$$
	so that $i\in B_t$, for some $t$.  As $w_i=c$ that means that $\ell:=\min(B_{t+1})< i$. As there is no \textbf{a} between $w_i$ and $w_j$ we see that $j\in B_{t+2}$.  Moreover, as there is no \textbf{a} between $w_j$ and $w_k$ we see that $k\in B_{t+1}$.  This provides our desired contradiction since the integers $\ell<i<j<k$ in the blocks $B_t, B_{t+1}$, and $B_{t+2}$ create an occurrence of the forbidden pattern $14/2/3$.
	
	To establish the other inclusion, consider $w\in W_n^*$ and assume for a contradiction that $\varphi(w)\notin\Pi_n(14/2/3)$.  Again set
		$$\varphi(w) = B_1/\cdots /B_m.$$
		By definition of the map $\varphi$, we see that $\min(B_i) > \max(B_j)$ for all $j+1<i$.  From this it follows that any occurrence of $14/2/3$ in $\varphi(w)$ must occur among three consecutive blocks $B_t, B_{t+1}, B_{t+2}$ and involve integers $\ell<i<j<k$ such that $\ell, k\in B_{t+1}$, $i\in B_t$, and $j\in B_{t+2}$.  This immediately implies that $w_i = w_k = \textbf{c}$ and that $w_j= \textbf{a}$ is the only \textbf{a}  between these two \textbf{c}'s.  This contradiction permits us to conclude that $\varphi(W_n^*) = \Pi_n(14/2/3)$ as claimed.    
	   
\end{proof}

\begin{theorem}\label{thm:enum14_2_3}
We have 
$$F_{14/2/3}(z) = \sum_{n\geq 1} |\Pi_n(14/2/3)|z^n = \frac{z-3z^2+3z^3}{1-5z+8z^2-5z^3}.$$
\end{theorem}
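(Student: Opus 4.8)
The plan is to invoke the bijection $\varphi\colon W_n^*\to\Pi_n(14/2/3)$ just established, so that $|\Pi_n(14/2/3)|=|W_n^*|$ and it suffices to compute the generating function $\sum_{n\ge 1}|W_n^*|\,z^n$. I would enumerate the words in $W_n^*$ by decomposing each such word according to the positions of its letters \textbf{c}. Writing a word $w\in W_n^*$ with exactly $t$ occurrences of \textbf{c} as $P\,\textbf{c}\,M_1\,\textbf{c}\,M_2\cdots\textbf{c}\,M_{t-1}\,\textbf{c}\,T$, here $P$ is the prefix preceding the first \textbf{c} (and $P$ is the whole word when $t=0$), each $M_i$ is the block of letters strictly between the $i$th and $(i+1)$st \textbf{c}, and $T$ is the tail following the last \textbf{c}. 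Each of $P,M_1,\dots,M_{t-1},T$ is a word in the two letters $\textbf{a},\textbf{b}$.

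Next I would translate the defining conditions of $W_n^*$ into independent conditions on these blocks. Since the number of \textbf{a}'s to the left of a letter is nondecreasing as one moves right, the requirement that every \textbf{c} be preceded by at least two \textbf{a}'s is equivalent to requiring that $P$ contain at least two \textbf{a}'s; moreover, as $w$ begins with \textbf{a}, the prefix $P$ begins with \textbf{a}. The requirement that the letters between two consecutive \textbf{c}'s not contain exactly one \textbf{a} says precisely that each middle block $M_i$ has a number of \textbf{a}'s different from $1$ (so $0$ or at least $2$), while $T$ is unconstrained and (in the case $t=0$) $P$ is simply an arbitrary word in $\{\textbf{a},\textbf{b}\}$ beginning with \textbf{a}. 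Marking length by $z$, the relevant generating functions for words over $\{\textbf{a},\textbf{b}\}$ are $\tfrac{1}{1-2z}$ (all words), $\tfrac{1}{1-z}$ (no \textbf{a}), and $\tfrac{z}{(1-z)^2}$ (exactly one \textbf{a}); hence a block with a number of \textbf{a}'s different from $1$ is counted by $M(z)=\tfrac{1}{1-2z}-\tfrac{z}{(1-z)^2}$, and a prefix beginning with \textbf{a} and having at least two \textbf{a}'s is counted by $P(z)=z\bigl(\tfrac{1}{1-2z}-\tfrac{1}{1-z}\bigr)=\tfrac{z^2}{(1-2z)(1-z)}$.

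I would then assemble these pieces. The words with no \textbf{c} contribute $\tfrac{z}{1-2z}$, and for $t\ge 1$ the decomposition gives a factor $P(z)$, a geometric series $\sum_{t\ge1}\bigl(zM(z)\bigr)^{t-1}=\bigl(1-zM(z)\bigr)^{-1}$ for the middle $\textbf{c}\,M_i$ pairs, and a factor $\tfrac{z}{1-2z}$ for the final \textbf{c} together with the tail $T$. This yields
$$\sum_{n\ge 1}|W_n^*|\,z^n=\frac{z}{1-2z}+\frac{z}{1-2z}\cdot\frac{P(z)}{1-zM(z)}.$$
A direct computation gives $1-zM(z)=\dfrac{1-5z+8z^2-5z^3}{(1-2z)(1-z)^2}$, and substituting this together with $P(z)$ collapses the expression: the numerator of the combined fraction factors as $(1-2z)(z-3z^2+3z^3)$, so the factor $1-2z$ cancels and one is left with $\tfrac{z-3z^2+3z^3}{1-5z+8z^2-5z^3}$, as claimed. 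The only real obstacle is organizational rather than conceptual: one must verify that the defining constraints of $W_n^*$ genuinely decouple across the blocks of the decomposition (in particular that the ``two \textbf{a}'s'' condition reduces to a condition on $P$ alone), after which the computation is a routine manipulation of rational functions, with the cancellation of $1-2z$ serving as a useful sanity check.
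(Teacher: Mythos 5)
Your proposal is correct and follows essentially the same route as the paper: both reduce to counting $W_n^*$ via the bijection, decompose words by the positions of the letters \textbf{c}, and use the same block generating functions $\tfrac{z}{1-2z}$, $z\bigl(\tfrac{1}{1-2z}-\tfrac{1}{1-z}\bigr)$, and $\tfrac{1}{1-2z}-\tfrac{z}{(1-z)^2}$; the only cosmetic difference is that you merge the ``exactly one \textbf{c}'' and ``at least two \textbf{c}'s'' cases into a single geometric series, whereas the paper treats them separately. Your algebra checks out, including the factorization $z-5z^2+9z^3-6z^4=(1-2z)(z-3z^2+3z^3)$ that cancels the spurious $1-2z$.
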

\begin{proof}
	By our previous lemma it suffices to find $\sum_{n\geq 1} |W_n^*|z^n$.  To this end we consider three cases depending on how many \textbf{c}'s our word contains.  
	
	The words in $W_n^*$ with no \textbf{c}'s are easily counted by the expression $\frac{z}{1-2z}$.  On the other hand, the words with exactly one \textbf{c}'s are counted by 
	$$z^2\left(\frac{1}{1-2z}-\frac{1}{1-z}\right)\frac{1}{1-2z}$$
	where the second term follows since any occurrence of a \textbf{c} must be preceeded by at least two \textbf{a}'s.  Lastly, we consider  words containing at least two \textbf{c}'s.  It is clear from the definitions that such words decompose as 

	$$\textbf{a}\ \underbrace{\boxed{\textbf{b}'s\mbox{ and }\textbf{a}'s}}_{\mbox{at least one } \textbf{a}}\ \textbf{c}\cdots\textbf{c}\cdots\textbf{c}\cdots\textbf{c}\ \boxed{\textbf{b}'s\mbox{ and }\textbf{a}'s},$$ 
	so that between any two consecutive \textbf{c}'s we do not have exactly one \textbf{a}.  Counting the words between any two consecutive \textbf{c}'s we have 
	$$G(z) = \frac{1}{1-2z} - \sum_{i\geq 1} iz^i = \frac{1}{1-2z} - \frac{z}{(1-z)^2}$$
	since such words are in the letters \textbf{a} and \textbf{b} but cannot have exactly one \textbf{a}.  In terms of $G(z)$ we see that 
	$$z\left(\frac{1}{1-2z}-\frac{1}{1-z}\right)\left(\frac{z^2G(z)}{1-zG(z)}\right)\left(\frac{1}{1-2z}\right)$$
	counts the case where our words have at least two \textbf{c}'s.  Summing these three cases yields the desired result.  
\end{proof}

\subsection{The pattern $1/24/3$}\label{sec:1_24_3}
The enumeration of the pattern $1/24/3$ closely resembles that of the enumeration of $14/2/3$ found in the previous section.  As a result, we begin similarly by observing that if $\pi=B_1/\cdots/B_m\in \Pi_n(1/24/3)$ then we can only have $n\in B_1$ or $n\in B_m$, as any other choice creates our forbidden pattern.  Recalling the set $W_n$ defined in the previous subsection, we define (recursively) the function 
$$\phi:W_n\to \Pi_n.$$  
First set $\phi(\textbf{a}) = 1$.  Next, for any $w\in W_n$, define $\phi(w)$ to be the set partition obtained from $\phi(w_1\cdots w_{n-1})=B_1/\cdots /B_m$ by inserting $n$ into a singleton block if $w_n = \textbf{a}$, inserting $n$ into the block $B_m$ if $w_n = \textbf{b}$, and lastly, inserting $n$ into the block $B_1$ provided $w_n=\textbf{c}$.  It now follows, since the words in $W_n$ have the property that any \textbf{c} must be preceded by at least two \textbf{a}'s, that $\phi$ is injective.  It also follows, from our initial observation, that $\Pi_n(1/24/3)\subseteq \phi(W_n)$.  
\begin{example}\label{ex:1_24_3}
If $w = \textbf{abbacb}$, then 
$$\phi(w) = 1235/46,$$	
and if $v = \textbf{aacac}$, then 
$$\phi(v) = 135/2/4.$$	
Note that $\phi(v)\notin\Pi_5(1/24/3)$.  
\end{example}
We see from this example that $\Pi_n(1/24/3)\subsetneq \phi(W_n)$.  As in the previous section, we seek a subset of $W_n$ whose image under $\phi$ is precisely $\Pi_n(1/24/3)$.  To this end, define $W_n^{**}$ to be the set of all $w\in W_n$ such that
\begin{enumerate}
	\item[1)] no \textbf{a} falls between any two \textbf{c}'s in $w$, and
	\item[2)] any \textbf{c} in $w$ which is preceded by at least three \textbf{a}'s cannot be immediately followed by a \textbf{b}.
\end{enumerate}

In the next lemma we prove that $W_n^{**}$ is this desired set.  To facilitate the reading of its proof we pause to highlight a couple key observations. Setting $w\in W_n$ with $\phi(w) = B_1/\cdots/B_m$, we first see that $\max(B_s)<\min(B_{s+1})$ for all $s>1$.   Our second observation is that  $w_i = \textbf{a}$ is the $t$th \textbf{a} in our word (from left to right) if and only if $\min(B_t) = i$.  With these observations in mind we now state and prove our lemma.  

\begin{lemma}
	The restricted map $\phi:W_n^{**} \to \Pi_n(1/24/3)$ is a bijection.  
\end{lemma}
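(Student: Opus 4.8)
The plan is to show that $\phi$ restricted to $W_n^{**}$ is both injective into $\Pi_n(1/24/3)$ and surjective onto it, exactly paralleling the structure of the $14/2/3$ argument. Since $\phi$ is already known to be injective on all of $W_n$, the only genuine work is to verify that $\phi(W_n^{**}) = \Pi_n(1/24/3)$, which I will establish by proving the two inclusions separately. I will lean heavily on the two observations stated just before the lemma: that $\max(B_s) < \min(B_{s+1})$ for all $s>1$ (so only the first block $B_1$ can ``interleave'' with later blocks), and that the $t$th \textbf{a} in $w$ marks the position $\min(B_t)$.

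First I would prove $\varphi(W_n^{**}) \subseteq \Pi_n(1/24/3)$ by contrapositive: suppose $w \in W_n$ and $\phi(w)$ contains an occurrence $\ell < i < j < k$ of $1/24/3$, meaning $i,k$ lie in one block, $j$ in a later block, and $\ell$ in a yet-later block (pattern $1/24/3$ has its middle two values $2,4$ in one block, $3$ in a block below them, $1$ in the lowest block). Because $\max(B_s) < \min(B_{s+1})$ for $s>1$, the only way to get such an interleaving is for the block containing $i,k$ to be $B_1$; the element $j$ is inserted by a \textbf{c} into $B_1$ after some later block has already been opened. I would translate this configuration into a statement about the letters of $w$ and show it forces a violation of condition (1) or (2) — that is, it forces an \textbf{a} strictly between two \textbf{c}'s, or a \textbf{c} preceded by at least three \textbf{a}'s and immediately followed by a \textbf{b}. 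Conversely, for $\phi(W_n^{**}) \supseteq \Pi_n(1/24/3)$, I would take $w \in W_n \setminus W_n^{**}$ and show $\phi(w)$ must contain $1/24/3$, splitting into the two ways membership can fail: an \textbf{a} between two \textbf{c}'s, and a \textbf{c} preceded by three \textbf{a}'s immediately followed by a \textbf{b}. In each case I would exhibit the four indices $\ell < i < j < k$ realizing the forbidden pattern, using the observation about which block each position lands in.

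The main obstacle I anticipate is the careful bookkeeping in condition (2): the subtle interplay between the number of \textbf{a}'s preceding a \textbf{c} (which governs how many blocks exist and hence whether $B_1$ genuinely sits below a later block) and the immediately-following \textbf{b} (which governs whether a second element gets appended to a later block to complete the ``$24$'' pair in $B_1$ together with a separating element). Unlike condition (1), which is a fairly direct ``no \textbf{a} between \textbf{c}'s'' translation of the interleaving requirement, condition (2) encodes a boundary case about exactly how many elements are needed to force the pattern, and I expect the delicate part will be verifying that the threshold ``at least three \textbf{a}'s'' is exactly right — neither too permissive (which would break the subset inclusion) nor too restrictive (which would break surjectivity). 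I would handle this by tracking, for the relevant \textbf{c}, the precise blocks $B_1, B_t, B_{t+1}$ into which the four witnessing integers fall, checking both directions against the same index computation so that the two inclusions pin down the condition uniquely. Once both inclusions are in hand, bijectivity follows immediately from the already-established injectivity of $\phi$.
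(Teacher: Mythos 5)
Your overall architecture matches the paper's: injectivity is inherited from $\phi$ on $W_n$, so the content is the set equality $\phi(W_n^{**}) = \Pi_n(1/24/3)$, proved one inclusion at a time using the two observations about block maxima and the positions of the \textbf{a}'s. But there is a genuine gap in your plan for the inclusion $\phi(W_n^{**}) \subseteq \Pi_n(1/24/3)$: you assert that in any occurrence of $1/24/3$ in $\phi(w)$ the doubleton of the pattern (your $i,k$, the second- and fourth-smallest of the four chosen integers) must lie in $B_1$. That is false. Take $w = \textbf{aaacb}$, so $\phi(w) = 14/2/35$. The only occurrence of $1/24/3$ here uses $2<3<4<5$ with the doubleton $\{3,5\}$ sitting in the third block and the separating element $4$ in $B_1$; the doubleton can never be $B_1=\{1,4\}$, since the pattern requires a chosen element smaller than both members of the doubleton and nothing is smaller than $1$. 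The inequality $\max(B_s) < \min(B_{s+1})$ for $s > 1$ does force every occurrence to involve $B_1$ and two later blocks $B_r, B_s$, but it leaves two cases: either the doubleton lies in $B_1$ with the separator in a later block, or the doubleton lies in a later block $B_s$ and the separator is an element of $B_1$ wedged between its two members. Your contrapositive argument covers only the first case, so it cannot rule out a word of $W_n^{**}$ whose image contains the pattern only via the second configuration.

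The omission also undercuts your treatment of condition (2), which you rightly flag as the delicate point. The witness you must exhibit for a word with a \textbf{c} preceded by at least three \textbf{a}'s and immediately followed by a \textbf{b} is precisely a second-case occurrence: with $w_iw_{i+1}=\textbf{cb}$ one gets $\min(B_2) < \min(B_t) < i < i+1$, where the doubleton $\{\min(B_t), i+1\}$ lies in $B_t$ ($t\geq 3$) and the separator $i$ lies in $B_1$. Under your stated structural claim this witness does not exist, so the ``at least three \textbf{a}'s'' threshold cannot be verified within your framework by bookkeeping alone; the fix is to add the missing case to the block analysis on both sides of the inclusion. With both cases in place, the rest of your plan goes through as in the paper.
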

\begin{proof}
We first show that $\Pi_n(1/24/3)\subseteq \phi(W_n^{**})$.  To do so, it suffices to show that if $w\in W_n\setminus W_n^{**}$, then $\phi(w)\notin \Pi_n(1/24/3)$.  Begin by setting $\phi(w) = B_1/\cdots/ B_m$.  We address  the two ways in which $w$ can fail to be a member of $W_n^{**}$ separately.  First, assume condition 1) fails, and let $i<j<k$ be such that $w_i=w_k=\textbf{c}$ and $w_j = \textbf{a}$.  In particular, $i,k\in B_1$.  Moreover, as any \textbf{c} must be preceded by at least two \textbf{a}'s, then it follows that $\min(B_2)<i<k$.  It also follows that if $w_j$ is the $t$th \textbf{a} in $w$, then $t\geq 3$.  Consequently, the blocks $B_1, B_2$, and $B_t$ contain an occurrence of $1/24/3$.   
	
	Next, let us assume condition 2) fails.  Here we assume that there exists some index $i$ so that $w_iw_{i+1}=\textbf{cb}$ and $w_i$ is preceded by  at least three \textbf{a}'s.  It immediately follows that $\phi(w_1\cdots w_{i-1})$ has $t\geq 3$ blocks and that in $\phi(w)$, $i\in B_1$, $i+1\in B_t$ and $\min(B_2)<\min(B_t)<i<i+1$.  Consequently, the blocks $B_1, B_2$, and $B_t$, contain an occurrence of our forbidden pattern.  We conclude that $\Pi_n(1/24/3)\subseteq \phi(W_n^{**})$.  
	
	Next, we demonstrate the other inclusion.  Fix $w\in W_n^{**}$ and, for a contradiction, assume that $\phi(w) \notin \Pi_n(1/24/3)$.  Set $\phi(w) = B_1/\cdots/B_m$.  As $\phi(w)$ contains an occurrence of $1/24/3$ we must have $m\geq 3$.  As $\max(B_s)<\min(B_{s+1})$ for all $s>1$ we see that any occurrence of $1/24/3$ in $\phi(w)$ must involve $B_1$ and two other blocks $B_r$ and $B_s$ with $r<s$.   If the integers that form this pattern are $i<j<k<\ell$ then we have exactly two cases.

	\medskip
	
	{\tt Case 1: } $j,\ell\in B_1$, $i\in B_r$, and $k\in B_s$
	
	\medskip
	
	In this case, we see that as $i\in B_r$, then $w_j=w_\ell=\textbf{c}$.  As the letters between $w_j$ and $w_\ell$ cannot contain an \textbf{a} we conclude that $w_k = \textbf{b}$.  It now follows that $w_j\cdots w_\ell$ must contains a \textbf{cb}.  
	Additionally, the facts that the letters between $w_j$ and $w_\ell$ do not contain an $\textbf{a}$, and that $j<k<\ell$, and that $w_{\min(B_s)} = \textbf{a}$, imply that $\min(B_s)<j$.  Furthermore, as $\min(B_r)\leq i<j$ we see that 
	$$w_1 = w_{\min(B_r)}=w_{\min(B_s)}=\textbf{a},$$
	and $1< \min(B_r)<\min(B_s)<j$.  This contradicts the fact that $w$ satisfies condition 2) in the definition of $W_n^{**}$.  
	
		\medskip
	
	{\tt Case 2: } $i\in B_r, j,\ell\in B_s$ and $k\in B_1$
	
	\medskip
	
	From the definition of $\phi$, we clearly have
	$$w_1 = w_{\min(B_r)}=w_{\min(B_s)}=\textbf{a}.$$
	Moreover, as $i\in B_r$ and $i<k$, then $w_k=\textbf{c}$.  Further as $\ell\neq \min(B_s)$, then $w_\ell = \textbf{b}$.  Lastly, it is clear that the subword $w_{k+1}\cdots w_{\ell-1}$ cannot contain the letter \textbf{a}.  (If it did, then $\ell$ could not be in $B_s$.) This means $w$ contains a \textbf{cb} preceded by at least three \textbf{a}.  Again this contradicts the fact that $w$ satisfies condition 2) in the definition of $W_n^{**}$.  This completes our proof.  
\end{proof}

\begin{theorem}\label{thm:enum1_24_3}
We have 
$$F_{1/24/3}(z) = \sum_{n\geq 1} |\Pi_n(1/24/3)|z^n = \frac{z-4z^2+6z^3-2z^4}{(1-3z+2z^2)^2}.$$
\end{theorem}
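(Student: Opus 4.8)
The plan is to invoke the preceding lemma, which furnishes a bijection $\phi : W_n^{**} \to \Pi_n(1/24/3)$, so that it suffices to compute $\sum_{n\geq 1} |W_n^{**}| z^n$. Exactly as in the proof of Theorem~\ref{thm:enum14_2_3}, I would organize the count according to the number of $\textbf{c}$'s occurring in a word $w\in W_n^{**}$, separating the words with no $\textbf{c}$ from those with at least one $\textbf{c}$. The words with no $\textbf{c}$ are precisely the words in $\{\textbf{a},\textbf{b}\}$ that begin with $\textbf{a}$, contributing $\frac{z}{1-2z}$.

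For the words containing at least one $\textbf{c}$, the heart of the argument is to read off the structural consequences of the two defining conditions of $W_n^{**}$. First I would note that condition 1) (no $\textbf{a}$ between two $\textbf{c}$'s) forces every $\textbf{c}$ to be preceded by the same number $p$ of $\textbf{a}$'s, where $p\geq 2$ since $w\in W_n$. I then write $w$ as a prefix (everything before the first $\textbf{c}$, a word in $\{\textbf{a},\textbf{b}\}$ beginning with $\textbf{a}$ and containing exactly $p$ occurrences of $\textbf{a}$), a middle (from the first $\textbf{c}$ through the last $\textbf{c}$, a word in $\{\textbf{b},\textbf{c}\}$ beginning and ending with $\textbf{c}$), and a suffix (the remaining word in $\{\textbf{a},\textbf{b}\}$), and I split on the value of $p$. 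When $p=2$, condition 2) is vacuous, so the middle is an arbitrary $\{\textbf{b},\textbf{c}\}$-word bounded by $\textbf{c}$'s and the suffix is arbitrary. When $p\geq 3$, condition 2) says that no $\textbf{c}$ may be immediately followed by $\textbf{b}$; since the middle begins with a $\textbf{c}$, this collapses the middle to a run $\textbf{c}^j$ of consecutive $\textbf{c}$'s, and it forces the suffix either to be empty or to begin with $\textbf{a}$.

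With this decomposition in hand the three generating functions are routine. A prefix with exactly $p$ occurrences of $\textbf{a}$ contributes $(z/(1-z))^p$, so prefixes with exactly two $\textbf{a}$'s give $\frac{z^2}{(1-z)^2}$ and those with at least three give $\frac{z^3}{(1-z)^2(1-2z)}$. In the $p=2$ case the middle contributes $\frac{z(1-z)}{1-2z}$ and the suffix $\frac{1}{1-2z}$, while in the $p\geq 3$ case the middle contributes $\frac{z}{1-z}$ and the suffix $\frac{1-z}{1-2z}$. Adding the no-$\textbf{c}$ term to these two products and placing everything over the common denominator $(1-z)^2(1-2z)^2 = (1-3z+2z^2)^2$, the numerators sum to $z-4z^2+6z^3-2z^4$, giving the stated formula. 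The one step demanding genuine care — and the point I expect to be the main obstacle — is justifying rigorously that condition 2) forces the middle to be $\textbf{c}^j$ and bars a leading $\textbf{b}$ in the suffix precisely when $p\geq 3$, since it is here that the interaction of conditions 1) and 2) pins down the admissible shapes; the rest is bookkeeping of elementary rational series.
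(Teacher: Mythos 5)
Your proposal is correct and follows essentially the same route as the paper: both reduce to counting $W_n^{**}$ via the bijection, split into the same three cases (no $\textbf{c}$, first $\textbf{c}$ preceded by exactly two $\textbf{a}$'s, first $\textbf{c}$ preceded by at least three $\textbf{a}$'s), and arrive at the same three rational summands $\frac{z}{1-2z}$, $\frac{z^3}{(1-z)(1-2z)^2}$, and $\frac{z^4}{(1-z)^2(1-2z)^2}$. Your structural analysis of how conditions 1) and 2) pin down the admissible word shapes is sound and matches the forms displayed in the paper's proof, differing only in the bookkeeping of where the boundary runs of $\textbf{b}$'s are assigned in the factorization.
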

\begin{proof}
	By our previous lemma it suffices to enumerate $W_n^{**}$.  To do so we consider three cases depending on the number of \textbf{c}'s.  Clearly, the words in $W_n^{**}$ with no \textbf{c}'s are counted by the expression $\frac{z}{1-2z}$.  Next consider words that contain at least one \textbf{c} and have the additional property that the first \textbf{c} appears after the second \textbf{a} but before the third \textbf{a} (if it exists).  Such words must be of the form
	$$\textbf{a}\textbf{b}\cdots\textbf{b}\textbf{a}\underbrace{\boxed{\textbf{b}'s\mbox{ and }\textbf{c}'s}}_{\mbox{at least 1 \textbf{c}}}\qquad \mbox{or}\qquad \textbf{a}\textbf{b}\cdots\textbf{b}\textbf{a}\underbrace{\boxed{\textbf{b}'s\mbox{ and }\textbf{c}'s}}_{\mbox{at least 1 \textbf{c}}}\ \textbf{a}\ \boxed{\textbf{a}'s\mbox{ and }\textbf{b}'s}.$$
	These two forms are counted by 
	$$\frac{z^2}{1-z}\left(\frac{1}{1-2z}-\frac{1}{1-z}\right)\left(1+\frac{z}{1-2z}\right).$$
	Lastly, we consider the case that our first \textbf{c} is preceded by at least three \textbf{a}'s.  In this case such words must be of the form
	$$\textbf{ab}\cdots\textbf{bab}\cdots\textbf{ba}\ \boxed{\textbf{a}'s\mbox{ and }\textbf{b}'s}\ \textbf{c}\cdots\textbf{c}\qquad\mbox{ or } \qquad \textbf{ab}\cdots\textbf{bab}\cdots\textbf{ba}\ \boxed{\textbf{a}'s\mbox{ and }\textbf{b}'s}\ \textbf{c}\cdots\textbf{c\ a}\ \boxed{\textbf{a}'s\mbox{ and }\textbf{b}'s}.$$
	Together such words are counted by the expression
	$$\frac{z^3}{(1-z)^2}\frac{z}{(1-2z)(1-z)}\left(1+\frac{z}{1-2z}\right).$$
	Adding these three terms together and simplifying yields the desired expression.  
\end{proof}

With the recursive structure of $1/24/3$-avoiding set partitions established, we now conclude the proof the case $|a-b|=2$ in the proof of Theorem~\ref{thm:delta_lessthan_sigma}, by establishing the following lemma.

\begin{lemma}\label{lem:1_24_3_less_singleton}
For $n\geq 5$, we have $|\Pi_n(1/24/3)| < |\Pi_n(1/2/3/4)|$.
\end{lemma}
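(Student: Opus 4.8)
The plan is to compare the two sequences $|\Pi_n(1/24/3)|$ and $|\Pi_n(1/2/3/4)|$ directly through their generating functions. We have just computed, in Theorem~\ref{thm:enum1_24_3}, that
$$F_{1/24/3}(z) = \frac{z-4z^2+6z^3-2z^4}{(1-3z+2z^2)^2},$$
and since $1/2/3/4 = \sigma_4$, the pattern $\sigma_k = 1/2/\cdots/k$ has the well-known enumeration $|\Pi_n(\sigma_4)|$ given by the exponential generating function $\exp_3(e^z-1)$ appearing in the table. However, rather than working with a mix of ordinary and exponential generating functions, I would first obtain a \emph{closed form} for $|\Pi_n(\sigma_4)|$. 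Recall that under the standard RGF bijection, $\Pi_n(\sigma_4)$ corresponds to $\rgf_n^{<4}$, the RGFs of length $n$ in the letters $\{1,2,3\}$; equivalently, these are partitions with at most three blocks, so $|\Pi_n(\sigma_4)| = S(n,1)+S(n,2)+S(n,3) = 1 + (2^{n-1}-1) + \tfrac{1}{2}(3^{n-1}-2^n+1)$, which simplifies to $|\Pi_n(\sigma_4)| = \tfrac{1}{2}(3^{n-1}+1)$.

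Next I would extract an explicit formula for $|\Pi_n(1/24/3)|$ from its rational generating function. The denominator factors as $(1-3z+2z^2)^2 = (1-z)^2(1-2z)^2$, so a partial fraction decomposition of $F_{1/24/3}(z)$ over the poles $z=1$ and $z=\tfrac12$ yields coefficients of the shape $|\Pi_n(1/24/3)| = (An+B) + (Cn+D)2^n$ for explicit constants $A,B,C,D$ determined by the numerator $z-4z^2+6z^3-2z^4$. I would carry out this routine decomposition to pin down the constants; the leading behavior is governed by the $z=\tfrac12$ pole (the dominant singularity), giving growth on the order of $n\cdot 2^n$, which is asymptotically smaller than the $3^{n-1}$ growth of $|\Pi_n(\sigma_4)|$.

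Once both closed forms are in hand, the inequality $|\Pi_n(1/24/3)| < |\Pi_n(\sigma_4)|$ for $n\geq 5$ reduces to comparing a linear-combination-of-$2^n$ expression against $\tfrac12(3^{n-1}+1)$. Since $3^{n-1}$ dominates $n\,2^n$, the inequality holds for all sufficiently large $n$, and the precise threshold $n\geq 5$ is then verified by a finite check of the remaining small values together with a monotonicity argument showing the ratio $|\Pi_n(\sigma_4)|/|\Pi_n(1/24/3)|$ is increasing past $n=5$. The main obstacle I anticipate is purely bookkeeping: correctly resolving the double poles in the partial fraction expansion so that the $n\,2^n$ and $2^n$ terms carry the right signs, since an arithmetic slip there would corrupt the small-$n$ comparison where the two sequences are closest. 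I would guard against this by cross-checking the derived formula against the first several computed values of $|\Pi_n(1/24/3)|$ before drawing the final comparison.
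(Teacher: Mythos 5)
Your approach is correct but genuinely different from the paper's. The paper argues combinatorially: it writes $\Pi_n(1/24/3) = \{\beta_n\} \cupdot A_n \cupdot A_n^* \cupdot C_n$ and $\Pi_n(1/2/3/4) = \{\beta_n\} \cupdot A_n \cupdot A_n^* \cupdot D_n$ (where $A_n$ is the two-block partitions and $A_n^*$ the three-block partitions with $n$ a singleton), reduces the claim to $|C_n| < |D_n|$, and proves that by induction from the base case $|C_5|<|D_5|$ using the recursive insertion structure, which gives $|C_{n+1}| \leq 3|C_n|$ while $|D_{n+1}| = 3|D_n|$. You instead extract closed forms from the generating functions and compare growth rates. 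Your plan checks out: $|\Pi_n(\sigma_4)| = S(n,1)+S(n,2)+S(n,3) = \tfrac{1}{2}(3^{n-1}+1)$ as you say, and the partial fraction decomposition of $F_{1/24/3}(z)$ over $(1-z)^2(1-2z)^2$ gives
$$F_{1/24/3}(z) = -\tfrac12 + \frac{1}{1-z} + \frac{1}{(1-z)^2} - \frac{2}{1-2z} + \frac{1/2}{(1-2z)^2},$$
hence $|\Pi_n(1/24/3)| = n+2+(n-3)2^{n-1}$ for $n\geq 1$ (which correctly yields $14$ at $n=4$ and $39$ at $n=5$), so the desired inequality becomes $n+2+(n-3)2^{n-1} < \tfrac12(3^{n-1}+1)$, easily settled for $n\geq 5$ by checking $n=5$ and an elementary induction such as $2(n-3)\leq (3/2)^{n-1}$. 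What your route buys is explicit formulas and asymptotics for both sequences, at the cost of the partial-fraction bookkeeping you rightly flag; the paper's route avoids all computation beyond the single value $n=5$ and stays entirely within the bijective framework it has already built, but yields no closed form. Do make sure to actually carry out the threshold verification rather than leaving it at ``sufficiently large $n$,'' since at $n=5$ the two counts differ only by $2$.
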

\begin{proof}
Let $A_n$ be the set of all partitions of $[n]$ with exactly 2 blocks and let $A_n^*$ be the set of all partitions with exactly 3 blocks in which $n$ is a singleton.  (Note $A_n^*$ is a subset of both  $\Pi_n(1/24/3)$ and $\Pi_n(1/2/3/4)$.)  Now let $C_n$ be the set of all partitions in $\Pi_n(1/24/3)$ with the property that removing $n$ gives a partition with at least 3 blocks.  Similarly, let $D_n$ be the set of all partitions in $\Pi_n(1/2/3/4)$ with the property that removing $n$ results in a set partition with exactly 3 blocks.  
	By our above note and the fact that the patterns involved have at least 3 blocks, it follows that
	$$\Pi_n(1/24/3) = \{\beta_n\} \cupdot A_n \cupdot A_n^* \cupdot C_n \qquad\textrm{and}\qquad \Pi_n(1/2/3/4) = \{\beta_n\} \cupdot A_n\cupdot A_n^* \cupdot D_n.$$
	It now suffices to show that $|C_n|<|D_n|$ for $n\geq 5$.  We proceed by induction on $n$.  As $|\Pi_5(1/24/3)| = 39$ and $|\Pi_5(1/2/3/4)| = 41$, we must have $|C_5| < |D_5|$.  From the description of $1/24/3$-avoiding permutations given in the first paragraph of this section, it follows that $|C_{n+1}|\leq 3|C_n|$.  Additionally, it is clear that $|D_{n+1}| = 3|D_n|$.  Together we get 
	$$|C_{n+1}| \leq 3|C_n| < 3|D_n| =  |D_{n+1}|$$
	where the second inequality follows by our inductive hypothesis.  (Note the first inequality may not be an equality as in Example~\ref{ex:1_24_3}.)
\end{proof}

\subsection{The pattern $12/34$}\label{sec:12_34}
\begin{theorem}\label{thm:enum12_34}
We have 
$$|\Pi_n(12/34)| = \sum_{k=0}^{\lfloor n/2\rfloor} k!{n \choose 2k}+\sum_{\ell=3}^{n-2k}{n\choose 2k+\ell}k!(k+1)^2.$$
\end{theorem}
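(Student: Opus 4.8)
The plan is to enumerate $\Pi_n(12/34)$ directly by analyzing the structure that the avoidance condition forces on a partition. First I would understand what it means for a partition $\pi \vdash [n]$ to contain $12/34$: by the same reasoning used throughout the paper, $\pi$ contains $12/34$ if and only if there exist two distinct blocks $B$ and $C$, each of size at least $2$, together with elements $b_1 < b_2$ in $B$ and $c_1 < c_2$ in $C$ such that $b_2 < c_1$ (so the two ``arcs'' are completely separated in value). Thus $\pi$ avoids $12/34$ precisely when among all blocks of size $\geq 2$, their ranges of values cannot be ordered so that one pair of block-elements lies entirely to the left of another. The cleanest way to state this: if we look only at blocks with at least two elements, no two of them can be ``value-separated'' in the sense that the largest element of one is smaller than the smallest pair-element of the other. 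I expect the correct characterization to be that at most a controlled amount of nontrivial block structure can occur, and that the singleton blocks are unconstrained.

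The key step is to turn this into a counting scheme matching the stated double sum. The first sum $\sum_k k!\binom{n}{2k}$ should count the partitions where every non-singleton block has size exactly $2$ and these $k$ pairs are arranged so as to avoid $12/34$; choosing the $2k$ elements in $\binom{n}{2k}$ ways and then the factor $k!$ counts the admissible matchings on them (the avoidance of $12/34$ among size-$2$ blocks is exactly a non-separation condition, which I would verify produces $k!$ rather than the full $(2k-1)!!$). I would first prove the bijection between avoiding matchings on $2k$ points and a set of size $k!$, likely by an insertion/recursive argument: reading the points left to right, each new pair's left endpoint must nest or cross appropriately with existing arcs so that no arc lies wholly to the right of a completed arc. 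The second sum, indexed additionally by $\ell \geq 3$, should account for partitions possessing exactly one block of size at least $3$ (of size $\ell$) alongside $k$ pairs; here $\binom{n}{2k+\ell}$ chooses the elements involved in nontrivial blocks, $k!$ counts the pair-arrangement as before, and the factor $(k+1)^2$ records the two independent choices of where the single large block is inserted relative to the $k$ pairs (left endpoint placement and right endpoint placement each having $k+1$ positions). The remaining $n - (2k+\ell)$ elements become singletons.

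I would carry out the argument in this order: (1) establish the block-level characterization of $12/34$-avoidance, isolating that singletons are free and that having two separated pairs is forbidden; (2) deduce the structural dichotomy — either all nontrivial blocks have size $2$, or there is exactly one block of size $\geq 3$ and all others have size $2$, with a strong constraint on their relative value-positions; (3) count the all-pairs case to obtain $\sum_k k!\binom{n}{2k}$; (4) count the one-large-block case to obtain the second sum, carefully justifying the $(k+1)^2$ factor by a placement argument for the large block's extreme elements among the pairs. Summing gives the formula.

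The main obstacle I anticipate is step (2) and the precise justification of the $(k+1)^2$ factor in step (4): I must show rigorously that a partition avoiding $12/34$ cannot have two blocks each of size $\geq 3$, nor one block of size $\geq 3$ together with a size-$2$ block in a separated position, and then pin down exactly the two degrees of freedom in placing the large block that yield $(k+1)^2$ (as opposed to, say, $\binom{k+2}{2}$ or $(k+1)(k+2)$). Getting the non-separation condition to collapse to the clean count $k!$ for matchings, and matching the large-block insertion to $(k+1)^2$, is where the combinatorial bookkeeping must be done with care to avoid off-by-one errors; I would verify both against the small cases (e.g. $n \leq 5$) before trusting the general formula.
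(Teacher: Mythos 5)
Your proposal follows essentially the same route as the paper: first show that a $12/34$-avoider has at most one block of size $\geq 3$, then use the non-separation condition to force all minima of the doubleton blocks to precede all their maxima (yielding the $k!\binom{n}{2k}$ count), and finally obtain the $(k+1)^2$ factor by choosing which of the $k+1$ smallest and which of the $k+1$ largest selected elements serve as the extremes of the unique large block. The structural claims you flag as needing verification are exactly the lemmas the paper proves, so your plan is sound and matches the published argument.
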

Before proving this result it will be helpful to state and prove a couple of lemmas.  

\begin{lemma}
	Any set partition which avoids $12/34$ has at most one block of size greater than $2$.  
\end{lemma}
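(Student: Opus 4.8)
The plan is to prove the contrapositive-style statement directly: if a set partition $\pi$ has two distinct blocks each of size at least $3$, then $\pi$ contains the pattern $12/34$. The key is the characterization of containment already in hand, namely that $\pi$ contains $12/34$ precisely when $\pi$ has two blocks $C$ and $D$ together with four elements that standardize to $12/34$; here $12/34$ means two elements of one block both lying below two elements of the other block (or the interleaving arrangement whose standardization is $12/34$). So first I would recall that a pattern $12/34$ is witnessed by four integers $x<y<z<w$ with $x,y$ in one block and $z,w$ in another.

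First I would suppose, for contradiction, that $\pi$ has two blocks $C$ and $D$ with $|C|\geq 3$ and $|D|\geq 3$. The heart of the argument is a short counting observation: among the first few smallest elements of $C\cup D$, one of the two blocks must contribute at least two elements that both precede two elements of the other block. Concretely, I would order the elements of $C\cup D$ and look at where the blocks' elements fall. Since each block has at least $3$ elements, I can always locate two elements of one block, say $x<y$ from $C$, and two elements of the other block, say $z<w$ from $D$, with $\max(x,y) < \min(z,w)$. The cleanest way to guarantee this: let $c_1<c_2<c_3$ be the three smallest elements of $C$ and $d_1<d_2<d_3$ the three smallest of $D$. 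Comparing $c_2$ with $d_2$, if $c_2<d_2$ then $c_1,c_2$ both lie below $d_2,d_3$ (since $d_2<d_3$ and $c_1<c_2<d_2$), giving a $12/34$ occurrence with $\{c_1,c_2\}$ playing the role of the low block and $\{d_2,d_3\}$ the high block; symmetrically if $d_2<c_2$ we use $\{d_1,d_2\}$ below $\{c_2,c_3\}$. (The case $c_2=d_2$ cannot occur as the blocks are disjoint.)

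This disjunction is the only real case analysis, and it is short. Having exhibited four elements whose standardization is $12/34$, I conclude that $\pi$ contains $12/34$, contradicting the hypothesis that $\pi$ avoids it. Hence at most one block can have size exceeding $2$.

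The step I expect to require the most care is verifying the inequality $\max\{c_1,c_2\} < \min\{d_2,d_3\}$ (and its mirror) cleanly, i.e.\ making sure the chosen four elements really are strictly increasing across the two blocks rather than merely weakly so; this is where disjointness of $C$ and $D$ does the work and rules out the degenerate equality. Everything else is a direct appeal to the block-based containment criterion, so no generating-function or recursive machinery is needed here.
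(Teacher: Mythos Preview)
Your argument is correct and is essentially the same as the paper's: both proofs compare the second-smallest elements of the two large blocks (your $c_2$ versus $d_2$, the paper's $x_2$ versus $y_2$) and observe that the block with the smaller second element contributes two elements below two elements of the other, yielding a $12/34$ occurrence. Your write-up is simply more explicit about why the four chosen elements are strictly increasing.
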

\begin{proof}
	For a contradiction assume $\pi\in \Pi_n(12/34)$ contains two blocks $B=\{x_1,\ldots, x_a\}$ and $C=\{y_1,\ldots, y_b\}$ where $a,b\geq 3$.  Without loss of generality we may further assume that $x_2<y_2$.  On the other hand, this implies that the blocks $B$ and $C$ must contain an occurrence of $12/34$ which is impossible.  
\end{proof}

The proof of the next lemma is straightforward.  The details are left to the reader.  
\begin{lemma}\label{lem:blockpositions12_34}
	Let $B=\{z_1<\cdots< z_a\}$, $C=\{x_1<y_1\}$, and $D=\{x_2<y_2\}$ be blocks in $\pi \in \Pi_n(12/34)$ so that $3\leq a$.  Then 
	\begin{enumerate}
\item[a)] $\max(x_1,x_2)<\min(y_1,y_2)$.  
\item[b)] $x_1<z_2$ and $z_{a-1}<y_1$.  
	\end{enumerate}
\end{lemma}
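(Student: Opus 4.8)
The plan is to establish Lemma~\ref{lem:blockpositions12_34} by a direct appeal to the avoidance hypothesis, treating the two assertions separately and reducing each to a short contradiction argument using the definition of $12/34$-containment. Recall that a set partition contains $12/34$ precisely when there are two blocks and four elements $p<q<r<s$ with $p,q$ in one block and $r,s$ in the other; equivalently, the two smallest chosen elements lie in one block and the two largest lie in the other. I would keep this characterization in mind throughout.

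For part a), I would argue by contradiction. Suppose $\max(x_1,x_2)\geq \min(y_1,y_2)$. Since $x_1<y_1$ and $x_2<y_2$ by hypothesis, the only way this can happen is if, say, $x_2>y_1$ (the symmetric case $x_1>y_2$ is handled identically by swapping the roles of $C$ and $D$). But then we have $x_1<y_1<x_2<y_2$, and the elements $x_1,y_1$ do \emph{not} form the forbidden configuration with $x_2,y_2$ directly; instead I would observe that $y_1<x_2<y_2$ together with $x_1<y_1$ produces $x_1<y_1<x_2<y_2$ where $\{x_1,y_1\}\subseteq C$ and $\{x_2,y_2\}\subseteq D$, so the two smallest selected values $x_1,y_1$ lie in $C$ and the two largest $x_2,y_2$ lie in $D$, which is an occurrence of $12/34$. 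This contradicts $\pi\in\Pi_n(12/34)$, establishing a).

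For part b), I would again use contradiction, exploiting the large block $B=\{z_1<\cdots<z_a\}$ with $a\geq 3$. For the claim $x_1<z_2$, suppose instead $z_2\leq x_1$, so that $z_1<z_2\leq x_1<y_1$; then $z_1,z_2\in B$ are the two smallest among $\{z_1,z_2,x_1,y_1\}$ and $x_1,y_1\in C$ are the two largest, giving an occurrence of $12/34$ between $B$ and $C$, a contradiction. For the claim $z_{a-1}<y_1$, suppose instead $y_1\leq z_{a-1}$; then since $x_1<y_1\leq z_{a-1}<z_a$, the two smallest of $\{x_1,y_1,z_{a-1},z_a\}$ are $x_1,y_1\in C$ and the two largest are $z_{a-1},z_a\in B$, again forcing a $12/34$ pattern. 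Both contradictions complete b).

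I do not expect any serious obstacle here; the statement is flagged as straightforward in the paper, and the entire argument is a matter of writing down the right four-element witnesses. The only point requiring a little care is the case analysis in part a), where one must be sure that the inequality $\max(x_1,x_2)\geq\min(y_1,y_2)$, combined with the standing assumptions $x_1<y_1$ and $x_2<y_2$, really does force one of the two interleavings $x_1<y_1<x_2<y_2$ or $x_2<y_2<x_1<y_1$, each of which yields the forbidden pattern; handling this cleanly is the main thing to get right.
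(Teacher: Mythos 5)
Your proof is correct: the paper itself omits the argument entirely (stating only that it is "straightforward" and leaving the details to the reader), and what you have written is exactly the intended direct verification, exhibiting in each case four elements $p<q<r<s$ with the two smallest in one block and the two largest in another. The one point you flag as needing care in part a) is handled properly, since disjointness of the blocks forces strict inequalities and hence one of the two interleavings $x_1<y_1<x_2<y_2$ or $x_2<y_2<x_1<y_1$.
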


\begin{remark}
It immediately follows from the previous lemma that if $x_1<\cdots<x_k$ are the minimum entries among all the blocks of size 2 and $y_1,\ldots, y_k$ are the maximum entries among all the blocks of size 2 then we must have 
$$x_i < \min(y_1,\ldots, y_k),$$	
for $1\leq i\leq k$.  
\end{remark}

\begin{proof}[Proof of Theorem~\ref{thm:enum12_34}]
To start, let us first count those set partitions in $\Pi_n(12/34)$ whose block sizes do not exceed 2.  To count such set partitions with exactly $k$ blocks of size 2, we first choose a subset $x_1<\cdots<x_k<y_1<\cdots y_k$ of size $2k$ from $[n]$. (The $n-2k$ integers not chosen become singletons.) We then may choose to match up each of the $x_i$'s with exactly one of the $y_i$'s in any of the $k!$ ways.  It follows from the above remark that all set partitions in $\Pi_n(12/34)$ whose block sizes do not exceed 2 are of this form.  A simple argument further shows that any set partition built in this manner must also avoid $12/34$.  Consequently, the number of such set partitions is given by
$$\sum_{k=0}^{\lfloor n/2\rfloor}k!{n \choose 2k}.$$

Now let us consider the set partitions in $\Pi_n(12/34)$ that contain $k$ blocks of size 2 and exactly one block of size $\ell\geq 3$.  To build such a partition, we first choose a subset of size $2k+\ell$ from $[n]$.  Let us denote the members of this subset by 
$$x_1<\cdots< x_{k+1}< z_1 <\cdots < z_{\ell-2} < y_1<\cdots <y_{k+1}.$$
(Again, the $n-2k-\ell$ integers not chosen become singletons in our final set partition.)
Next, choose exactly one element from the $x_i$'s and one element from the $y_i$'s along with all the $z_i$'s to form our block of size $\ell$.  Next, as in the preceding case we are free to match each of the remaining $x_i$'s to each of the remaining $y_i$'s in all $k!$ possible ways to form our $k$ blocks of size 2.  Lemma~\ref{lem:blockpositions12_34} guarantees that any such set partition is built in this manner.  Furthermore, it is straightforward to show that any set partition built in this manner avoids $12/34$.  Consequently, the number of such set partitions is given by 
$$\sum_{k=0}^{\lfloor n/2\rfloor}\sum_{\ell=3}^{n-2k}{n\choose 2k+\ell}k!(k+1)^2.$$
Adding these two terms gives our final result.  
	
\end{proof}

\subsection{The pattern $14/23$}\label{sec:14_23}
To begin, let us concentrate on the subset $\Pi_n^*(14/23)$ consisting of all set partitions in $\Pi_n(14/23)$ that do not contain singletons.  We first show that the set partitions in this subset can be constructed via a simple recursive procedure.  To do this we first need the following definition.
\begin{definition}
	For any $\pi\in\Pi_n^*(14/23)$ we say $k$ is a \emph{cap} provided each $i\geq k$ is the maximum element in its block.  Let $\iota(\pi)$ be the number of caps in $\pi$.  
\end{definition}

Next, we define the sets $\Pi_n^*$ as follows.  First, set $\Pi_2^* = \{\{1,2\}\}$.  For $n\geq 3$, define $\Pi_n^*$ to be the set of all partitions obtained by either of the following two operations.  
\medskip

\noindent
{\tt $n$-insertion}
\medskip

For any $\pi\in \Pi_{n-1}^*$ insert $n$ into the block containing $n-1$.
\medskip

\noindent
{\tt $n$-augmentation}
\medskip

For $\sigma \in \Pi_{n-2}^*$ where $n\geq 4$ doing the following.  Fix $k$ to be either one of the $\iota(\pi)$ caps in $\sigma$ or set $k=n-1$. Then, increment all the values in $\sigma$ which are $\geq k$.  Finally, append the doubleton block $\{k,n\}$.

We note that no partition in $\Pi_n^*$ contains a singleton.  

\begin{lemma}
	We have $\Pi_n^* = \Pi^*_n(14/23)$.  
\end{lemma}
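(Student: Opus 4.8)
The plan is to prove the set equality by double inclusion, establishing each inclusion by induction on $n$, with base case $n=2$ where both sides equal $\{\{1,2\}\}$ (the single partition $12$). Throughout I use the convention that an occurrence of $14/23$ is a quadruple $a<b<c<d$ with $a,d$ in one block and $b,c$ in another.

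First I would show $\Pi_n^* \subseteq \Pi_n^*(14/23)$, assuming the inclusion holds for all smaller indices. Take $\pi\in\Pi_n^*$; since neither operation ever creates a singleton, $\pi$ has no singletons, so it remains to verify that $\pi$ avoids $14/23$. If $\pi$ arises by $n$-insertion from $\pi'\in\Pi_{n-1}^*$, any occurrence of $14/23$ in $\pi$ must use $n$ as the top element $d$ (otherwise it already lives in $\pi'$); then $a,n$ lie in the block of $n-1$ and $b,c$ lie in another block with $c\leq n-2$, so replacing $n$ by $n-1$ produces an occurrence $a<b<c<n-1$ in $\pi'$, a contradiction. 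If $\pi$ arises by $n$-augmentation from $\sigma\in\Pi_{n-2}^*$ at the value $k$, any occurrence disjoint from the appended block $\{k,n\}$ lives in $\sigma$; an occurrence meeting $\{k,n\}$ must have $\{a,d\}=\{k,n\}$, since $\{k,n\}$ is the only block containing either $k$ or $n$, giving $k<b<c<n$ with $b,c$ in a common block. Here the cap hypothesis is decisive: after incrementing, every element exceeding $k$ is the maximum of its block, so $b>k$ cannot be a non-maximal blockmate of $c$, a contradiction. (When $k=n-1$ the open interval $(k,n)$ is empty, so there is nothing to check.)

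Next I would show $\Pi_n^*(14/23)\subseteq\Pi_n^*$, again using the inductive hypothesis for smaller indices. Take $\pi\in\Pi_n^*(14/23)$ and let $B$ be the block containing $n$, so $|B|\geq 2$ as $\pi$ has no singletons. I split on $|B|$. If $|B|\geq 3$, I claim $n-1\in B$; granting this, deleting $n$ from $B$ yields $\pi'\in\Pi_{n-1}^*(14/23)$ (no singleton is created and deletion preserves avoidance), and $\pi$ is precisely the $n$-insertion of $\pi'$, so $\pi\in\Pi_n^*$ by induction. If $|B|=2$, write $B=\{k,n\}$; removing $B$ and decrementing all values exceeding $k$ produces $\sigma\in\Pi_{n-2}^*(14/23)$, and $\pi$ is the $n$-augmentation of $\sigma$ at $k$, so $\pi\in\Pi_n^*$ provided $k$ is a cap of $\sigma$ (or $k=n-1$).

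The main obstacle is the pair of structural claims underlying the reverse direction, each proved by exhibiting a forbidden $14/23$ under the contrary hypothesis. For the first, suppose $|B|\geq 3$ with $B=\{b_1<\cdots<b_t=n\}$ but $n-1\notin B$; then $b_2\leq b_{t-1}\leq n-2$, and the block $C$ of $n-1$ satisfies $\max C=n-1$ with some $c_1=\min C<n-1$. If $c_1>b_1$ then $b_1<c_1<n-1<n$ is a $14/23$ (outer pair $b_1,n\in B$, inner pair $c_1,n-1\in C$); if $c_1<b_1$ then $c_1<b_1<b_2<n-1$ is a $14/23$ (outer pair in $C$, inner pair in $B$); either way a contradiction, forcing $n-1\in B$. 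For the second, in the case $B=\{k,n\}$ I must check $k$ is a cap of $\sigma$, equivalently that every $i$ with $k<i\leq n-1$ is the maximum of its block in $\pi$; if such an $i$ had a larger blockmate $i'$, then $k<i<i'<n$ would be a $14/23$ with outer pair $\{k,n\}=B$ and inner pair $\{i,i'\}$, again impossible. These two contradiction arguments are the crux; once they are in place, the induction closes routinely.
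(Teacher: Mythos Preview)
Your proof is correct and follows essentially the same approach as the paper: both argue by induction, splitting the reverse inclusion on whether the block of $n$ has size $\geq 3$ (forcing $n-1$ into it by a $14/23$-contradiction) or size $2$ (forcing the cap condition). The only differences are cosmetic---you supply the forward inclusion in full where the paper leaves it to the reader, and in the $|B|\geq 3$ case you compare $c_1$ against $b_1$ (using the two smallest elements of $B$) whereas the paper compares $y_{\ell-1}$ against $x_{k-2}$ (using elements near the top); both choices produce the needed forbidden pattern.
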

\begin{proof}
	We leave it to the reader to convince themselves that $\Pi_n^* \subseteq \Pi_n^*(14/23)$.  We show the other inclusion by induction on $n$.  First note that $\Pi_2^* = \{\{1,2\}\} = \Pi_2^*(14/23)$.  Now take any $\pi\in \Pi_{n+1}^*(14/23)$ and let $B$ be the block whose cap is $n+1$.  If $|B|\geq 3$, then we claim that $n\in B$ as well.  It then follows (inductively) that $\pi\in \Pi_{n+1}^*$.  To prove this claim, let $B = \{x_1<x_2<\cdots < x_k\}$ so that $k\geq 3$ and $x_k = n+1$.  For a contradiction assume $x_{k-1}<n$.  So there exists a block $C$, distinct from $B$, whose cap is $n$.  Set $C = \{y_1<\cdots<y_\ell\}$ so that $y_\ell = n$ and $\ell\geq 2$, as $\pi$ does not contain singletons.  We must have 
	$$y_{\ell-1} < x_{k-2}<x_{k-1} < n = y_\ell\quad\mbox{or}\qquad x_{k-2}<y_{\ell-1}<y_\ell=n<n+1 = x_k.$$
	But either choice results in an occurrence of the forbidden pattern $14/23$.  Hence $n\in B$ as claimed.  
	
	The other possibility is for $B=\{k,n+1\}$.  As $\pi$ avoids $14/23$ it follows that every $i$ strictly between $k$ and $n+1$ must be a cap.  Consequently, $\pi$ was constructed from some set partition in $\Pi_n^*(14/23)= \Pi_n^*$ via $n+1$-augmentation.  
\end{proof}
With this lemma established, we are now ready to enumerate this pattern.  
\begin{theorem}
	We have 
	$$F_{14/23}(z) =\sum_{n\geq 0} |\Pi_n(14/23)|\ z^n =  G\left(\frac{z}{1-z}\right)\frac{1}{1-z} + \frac{1}{1-z},$$
	where $$G(z) = \frac{z-2z^2(1+z)-z\sqrt{1-4z^2}}{-2+2z(1+z)^2}.$$
\end{theorem}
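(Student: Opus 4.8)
The plan is to translate the recursive structure established in the preceding lemma into a functional equation for a bivariate generating function, where one variable tracks the size $n$ and the second tracks the cap statistic $\iota$. Since the two construction operations ($n$-insertion and $n$-augmentation) are defined in terms of the number of caps of the partition being built upon, the cap statistic is precisely the right refinement to make the recursion bookkeepable. First I would define
\begin{equation*}
P(z,t) = \sum_{n\geq 2}\ \sum_{\pi\in\Pi_n^*} z^n t^{\iota(\pi)},
\end{equation*}
and work out how each operation transforms the pair $(n,\iota)$. The key observation is that $n$-insertion adds one to $n$ and adds exactly one new cap (the element $n$ becomes the maximum of its block, and no smaller element's status changes), so it multiplies by $zt$. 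For $n$-augmentation one fixes either one of the $\iota(\sigma)$ existing caps or the value $n-1$; after incrementing and appending $\{k,n\}$, I would carefully track how many caps the resulting partition has as a function of $k$ and $\iota(\sigma)$. This cap-count analysis is the heart of the computation and will determine the coefficients in the functional equation.

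Having set up the functional equation for $P(z,t)$, the next step is to solve it. Because the augmentation operation sums over the choices $k$ among the caps, I expect the equation to be linear in $P(z,t)$ but to involve $P$ evaluated at $t=1$ (or at some shifted argument) through the "sum over caps" term; this is the familiar kernel-method situation. I would therefore apply the kernel method: treat the equation as determining $P(z,t)$, choose the value of $t$ that cancels the kernel, and thereby extract $P(z,1) = \sum_{n\geq 2}|\Pi_n^*(14/23)|z^n$. The closed form $G(z)$ appearing in the statement, with its $\sqrt{1-4z^2}$, strongly suggests that the kernel is a quadratic whose discriminant produces exactly this radical, so I would expect $P(z,1)$ to equal $G(z)$ after simplification.

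Finally, I would pass from the singleton-free count to the full count $|\Pi_n(14/23)|$. Every $14/23$-avoiding partition is obtained from a unique singleton-free one (its restriction to the non-singleton blocks, suitably standardized) by inserting some singletons back in; equivalently, inserting singletons into a word/partition corresponds to the substitution $z\mapsto z/(1-z)$ together with a factor $1/(1-z)$ for the freedom in placing the singletons, plus the $1/(1-z)$ term accounting for the all-singleton partitions that have no non-singleton core. This yields
\begin{equation*}
F_{14/23}(z) = G\!\left(\frac{z}{1-z}\right)\frac{1}{1-z} + \frac{1}{1-z},
\end{equation*}
matching the claimed formula. The main obstacle I anticipate is the cap-counting in the augmentation step: determining precisely how $\iota$ changes when one increments values above a chosen cap $k$ and appends $\{k,n\}$ requires a careful case analysis (in particular, whether the new block's smaller element $k$ and the elements just above it remain caps), and getting these transition weights exactly right is what makes the kernel come out to yield the radical $\sqrt{1-4z^2}$. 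Verifying the singleton-insertion substitution rigorously — that it is a genuine bijection respecting $14/23$-avoidance — is a secondary point that I would check but expect to be routine.
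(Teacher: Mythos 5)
Your overall strategy coincides with the paper's: a bivariate generating function $H(z,t)$ tracking length and the cap statistic, a functional equation derived from the insertion/augmentation recursion, the kernel method to extract $H(z,1)=G(z)$, and the substitution $z\mapsto z/(1-z)$ together with a factor $1/(1-z)$ to reinstate singleton blocks (plus the $1/(1-z)$ term for the all-singleton partitions). So the route is the same; the issue is that the one transition you actually computed is wrong, and carrying it through would produce the wrong kernel. Under $n$-insertion the element $n-1$ ceases to be the maximum of its block, so \emph{every} previously existing cap is destroyed and the resulting partition has exactly one cap, namely $n$. The corresponding term in the functional equation is therefore $zt\,H(z,1)$, not $zt\,H(z,t)$ as your phrase ``adds exactly one new cap \ldots no smaller element's status changes, so it multiplies by $zt$'' would yield. (For instance, $14/25/36$ has three caps, but inserting $7$ into the block of $6$ gives $14/25/367$, which has only one.)

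For the augmentation step, which you deferred: if $\sigma\in\Pi_{n-2}^*$ has caps $c,c+1,\dots,n-2$ and one chooses $k$ among these (or $k=n-1$), then after incrementing and appending $\{k,n\}$ the caps of the new partition are exactly $k+1,\dots,n$, since $k$ is no longer the maximum of its block. Hence a partition with $\iota(\sigma)=j$ contributes $z^2(t+t^2+\cdots+t^{j+1})$, which sums to the term $\frac{z^2t}{1-t}\bigl(H(z,1)-tH(z,t)\bigr)$. With the corrected insertion term the equation becomes
\begin{equation*}
H(z,t) = z^2t + zt\,H(z,1) + \frac{z^2t}{1-t}\bigl(H(z,1)-tH(z,t)\bigr),
\end{equation*}
and the kernel method then gives the stated $G(z)$. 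The singleton-reinsertion step you describe is exactly the paper's and is indeed routine.
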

\begin{proof}
	In light of the previous lemma we know that 
	$$G(z) = \sum_{n\geq 2} |\Pi^*_n(14/23)|\ z^n = \sum_{n\geq 2} |\Pi^*_n|\  z^n.$$
	It is straightforward to see that an arbitrary set partition in $\Pi_n(14/23)$ is obtained by inserting $k$ singleton blocks into some set partition in $\Pi_{n-k}(14/23)$.  In terms of generating functions this corresponds to 
	$$F_{14/23}(z) = G\left(\frac{z}{1-z}\right)\frac{1}{1-z} + \frac{1}{1-z}.$$
	It only remains to prove that $G$ is given by the desired generating function.  To do this, first define
	$$H(z,t) = \sum_{n\geq 2}\sum_{\pi \in \Pi_n^*}z^n t^{\iota(\pi)}.$$
	The recursive description of $\Pi_n^*$ translates into the functional equation
	$$H(z,t) = z^2t +ztH(z,1) + \frac{z^2t}{1-t}\left(H(z,1) -tH(z,t)\right),$$
	where the second term corresponds to $n$-insertion and the third term corresponds to $n$-augmentation.  Solving for $G(z) = H(z,1)$ using the kernel method results in the desired expression.  
\end{proof}

\subsection{The pattern $1/234$}\label{sec:1_234}
\begin{theorem}
	We have 
	$$|\Pi_n(1/234)| = \sum_{\ell=1}^{n} M(n-\ell) + \sum_{\ell=1}^{n-2} (n-\ell-1)M(n-\ell-1)+\sum_{\ell=1}^{n-3} {n-\ell-1 \choose 2}M(n-\ell-2),$$	
	where 
	$$M(n) = \sum_{k=0}^{\lfloor n/2 \rfloor} {n\choose 2k} (2k)!!\ .$$
\end{theorem}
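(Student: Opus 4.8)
The plan is to enumerate $\Pi_n(1/234)$ by classifying each avoiding partition according to the block $B_1$ containing $1$ (whose size I will call $\ell$) and then counting, for the remaining elements, how the rest of the partition may look. The key structural observation is this: if $\pi$ avoids $1/234$ and $1 \in B_1$, then among the blocks other than $B_1$ there can be at most one block of size $\geq 3$, and moreover such a large block is severely constrained. Indeed, if some block $C \neq B_1$ had three elements $y_1<y_2<y_3$, then together with $1 \in B_1$ these would form an occurrence of $1/234$ unless $1$ is forced into a position that prevents standardization to $1/234$; a careful look shows that any element outside $B_1$ that lies to the right of $1$ combined with three elements of another block produces the pattern. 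So once $B_1$ is removed, the induced partition on the remaining $n-\ell$ elements must itself avoid having any block of size $\geq 3$ sitting entirely to the right of the minimum — which, after accounting for the single permitted large block, reduces to counting \emph{matchings-with-singletons} on the remaining points.

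First I would establish the auxiliary count $M(m) = \sum_{k}\binom{m}{2k}(2k)!!$, which enumerates partitions of an $m$-set into blocks all of size $\le 2$ (choose the $2k$ matched points and pair them up in $(2k)!! = (2k-1)!!\cdot$ — here $(2k)!!$ counts perfect matchings on $2k$ points via the standard double-factorial identity). This $M(m)$ will serve as the generating count for the ``small-block'' part of the partition. Then I would split $\Pi_n(1/234)$ into three families according to whether the large block (the unique block of size $\geq 3$ other than $B_1$, if present) is absent, has size exactly $3$, or has size exactly $4$ — wait, more precisely according to how many elements beyond a pair the exceptional block carries. The three summands in the claimed formula correspond exactly to: (i) no oversized block, giving $\sum_\ell M(n-\ell)$; (ii) one block of size $3$, contributing the factor $(n-\ell-1)$ for the placement of its extra element and $M(n-\ell-1)$ for the rest; and (iii) one block of size $4$, contributing $\binom{n-\ell-1}{2}$ for its two extra elements and $M(n-\ell-2)$ for the rest.

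The main steps in order are: (1) prove the structural lemma bounding the number and size of large blocks outside $B_1$, showing that at most one block other than $B_1$ has size $\ge 3$ and that its size is at most $4$; this is where the avoidance condition does all the work. (2) Verify the combinatorial constraint that determines \emph{which} elements may go into the exceptional large block relative to $B_1$ and the matched pairs — this is what produces the binomial coefficients $\binom{n-\ell-1}{2}$ and the linear factor $(n-\ell-1)$ rather than a full free choice. (3) Assemble the count by summing over $\ell = |B_1|$ from $1$ to $n$, using $M$ for the small-block remainder in each case and taking care of the ranges of $\ell$ (the second and third sums start losing terms once $n-\ell$ is too small to accommodate the oversized block). (4) Confirm that every configuration counted genuinely avoids $1/234$ and that distinct configurations give distinct partitions, so the count is neither an over- nor under-count.

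The hard part will be step (2): pinning down precisely the positional restrictions that the pattern $1/234$ imposes on the interaction between $B_1$, the exceptional large block, and the size-$\le 2$ blocks, so that the counts come out as clean binomials times $M$-values rather than as a messier sum. In particular I expect the subtlety to lie in showing that the size-$4$ case contributes a \emph{free} choice of two extra elements (hence $\binom{n-\ell-1}{2}$) while the size-$3$ case contributes a free choice of one element (hence the factor $n-\ell-1$), and in verifying that no larger block can occur — equivalently, that a block of size $5$ outside $B_1$ is always forced to contain a $1/234$ pattern. Establishing this crisp dichotomy, and checking the boundary ranges of the summation indices so the three sums stitch together without double-counting the singleton and small-block cases, is the crux; the remainder is bookkeeping with the double-factorial identity for $M(m)$.
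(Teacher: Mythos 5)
There is a genuine gap, and it is in the very first structural claim. You assert that among the blocks other than $B_1$ there can be ``at most one block of size $\geq 3$,'' with sizes $3$ or $4$ permitted. In fact there can be \emph{none}: if a block $C\neq B_1$ contains three elements $y_1<y_2<y_3$, then the set $\{1,y_1,y_2,y_3\}$ already standardizes to $1/234$, because $1$ is the global minimum and lies in a different block. So cases (ii) and (iii) of your decomposition are empty, and the factors $(n-\ell-1)$ and $\binom{n-\ell-1}{2}$ cannot arise from an exceptional block outside $B_1$. (Your own bookkeeping signals this: a size-$3$ block outside $B_1$ would consume three of the $n-\ell$ remaining points, leaving $M(n-\ell-3)$ for the matching, not the $M(n-\ell-1)$ that appears in the formula.)

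What you are missing is the constraint on $B_1$ itself, which is where those factors actually come from. If some $c\notin B_1$ had three elements of $B_1$ above it, then $c$ would play the role of the ``$1$'' and those three elements the ``$234$''; hence $B_1$ must be an initial interval $\{1,\ldots,\ell\}$ together with at most two further elements beyond a gap, i.e.\ $\{1,\ldots,\ell\}$, or $\{1,\ldots,\ell,k\}$ with $k>\ell+1$, or $\{1,\ldots,\ell,k,m\}$ with $m>k>\ell+1$. The parameter $\ell$ in the formula is the length of this initial run, not $|B_1|$; the three sums correspond to zero, one, or two extra elements of $B_1$ (giving the factors $1$, $n-\ell-1$, and $\binom{n-\ell-1}{2}$), and the remaining $n-\ell$, $n-\ell-1$, or $n-\ell-2$ points carry a partition into blocks of size at most $2$, counted by $M(\cdot)$. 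Note also that under your reading, where $B_1$ is an arbitrary $\ell$-subset containing $1$, even the first sum would be wrong: it would have to be $\sum_\ell\binom{n-1}{\ell-1}M(n-\ell)$ rather than $\sum_\ell M(n-\ell)$. So the decomposition needs to be rebuilt around the anatomy of $B_1$, not around a hypothetical large block elsewhere.
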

\begin{proof}
Let $\pi = B_1/\cdots /B_m \in \Pi_n(1/234)$.  First observe that $|B_i|\leq 2$ for all $i\geq 2$ since $1\in B_1$ and $\pi$ avoids $1/234$.  This means that the set partition $B_2/\cdots /B_m$ is a matching with fixed points of the set $[n]\setminus B_1$ which has size $n_0 = n-|B_1|$.  It is a well known result that such objects are counted by 
$$M(n_0) = \sum_{k=0}^{\lfloor n_0/2 \rfloor} {n_0\choose 2k} (2k)!!\ .$$
Next, observe that $B_1$ is not free to be any subset of $[n]$.  In fact $B_1$ must be of either the form
\begin{enumerate}
	\item[1)] $\{1,\ldots, \ell\}$, or
	\item[2)] $\{1,\ldots, \ell, k\}$ for some $k>\ell+1$, or
	\item[3)] $\{1,\ldots, \ell, k,m\}$ for some $m>k>\ell+1$,
\end{enumerate}
as any other possibility would result in an occurrence of $1/234$.  

Combining these two observations we see that the first term in our formula counts the set partitions in $\Pi_n(1/234)$ whose first block is of the form in 1).  Likewise, the second and third terms count those set partitions whose first block is of the form in 2) and 3) respectively.  
\end{proof}

\subsection{The pattern $134/2$}\label{sec:134_2}
To enumerate this pattern we require the well studied notion of weak integer compositions.  In particular, we denote by $C_{n,k}$ the set of all weak integer compositions of $n$ with $k$ parts. Additionally we denote by $M_{k,f}$ the set of all set partitions of $[2k+f]$ with $f$ singletons and $k$ doubletons.  (Observe these are just matchings with $f$ fixed points.)  Lastly, we define the refined set $\Pi_{n,k,f}(134/2)$ to be the set of all set partitions in $\Pi_{n}(134/2)$ with exactly $k+f$ blocks where exactly $f$ of them are singletons.  

\begin{lemma}\label{lem:bijection134_2}
	Provided, $k\geq 1$ and $2k+f\leq n$, there exists an explicit bijection 
	$$\phi:\Pi_{n,k,f}(134/2)\to C_{n-f-2k,k} \times M_{k,f} .$$
\end{lemma}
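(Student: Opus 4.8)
<br />

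The plan is to construct the bijection $\phi$ explicitly by analyzing the structure of a set partition $\pi \in \Pi_{n,k,f}(134/2)$ and showing that it is completely determined by two independent pieces of data: a weak composition recording ``gaps'' and a matching recording the pairing structure. The key structural observation I would establish first is what the avoidance of $134/2$ forces on the non-singleton blocks of $\pi$. Recall that $B$ contains $134/2$ exactly when $B$ has an element $c \notin B$ with at least one element of $B$ below $c$ and at least two elements of $B$ above $c$; equivalently, $\pi$ avoids $134/2$ iff no block $B$ has a ``hole'' $c \notin B$ lying strictly below two elements of $B$ and strictly above one element of $B$. I expect this to imply that, in each block of size $\geq 2$, everything except possibly the minimum element forms a block of \emph{consecutive} integers — so each non-singleton block looks like a single small element followed by a run of consecutive integers at the top. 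This rigidity is what makes the doubleton/matching encoding possible.

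Granting that structure, my plan is to separate the data carried by $\pi$ into two parts. First I would record, for the $k$ non-singleton blocks together with the $f$ singletons, the relative order in which their \emph{minimum elements} (for doubletons, the pairing of the small ``opening'' element with the top of its consecutive run) appear; this is precisely a matching on $2k+f$ points with $f$ fixed points, i.e.\ an element of $M_{k,f}$. Second, I would record the sizes of the consecutive runs and the positions of the remaining $n - f - 2k$ elements as a weak integer composition into $k$ parts — each part counting how many ``extra'' consecutive integers are appended to the $i$th non-singleton block beyond its two defining extreme elements. The map $\phi$ sends $\pi$ to the pair (composition, matching), and the inverse reconstructs $\pi$ by first laying down the matching skeleton on $2k+f$ canonical positions and then inflating each doubleton block by inserting its allotted run of consecutive integers, reindexing the ground set to $[n]$.

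The verification then splits into three routine checks: that $\phi(\pi)$ always lands in $C_{n-f-2k,k} \times M_{k,f}$; that the reconstruction procedure always yields a genuine $134/2$-avoiding partition with the correct refined type $(n,k,f)$; and that the two maps are mutually inverse. The counts $n - f - 2k$ (total number of elements not accounted for by the $k$ ``defining pairs'' and $f$ singletons) and $k$ (one run per doubleton block) are forced by the structural lemma, which is the crux of why the target set has exactly this shape.

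I expect the \textbf{main obstacle} to be pinning down the exact structural normal form of a $134/2$-avoiding block and checking that the inflation-by-consecutive-runs operation never creates a forbidden $134/2$ occurrence \emph{across} blocks. The within-block rigidity is relatively clean, but when one inflates a block by inserting consecutive integers, the global relabeling shifts the positions of elements in \emph{other} blocks, and one must confirm that no new configuration in which a hole of one block sits below two elements and above one element of that same block is introduced — in other words, that consecutiveness is preserved under the reindexing. Handling this carefully, rather than the bookkeeping of the composition, is where the real content of the bijection lies.
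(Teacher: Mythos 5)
Your overall architecture coincides with the paper's: establish a rigid normal form for the non-singleton blocks of a $134/2$-avoiding partition, then encode $\pi$ by (i) the weak composition whose parts are $|B|-2$ over the $k$ non-singleton blocks and (ii) the matching with $f$ fixed points obtained by retaining only the minimum and maximum of each block and standardizing. However, your structural claim is stated backwards, and as written it is false. Writing $134/2=\beta_{4,2}$, a block $B$ contains the pattern iff some hole $c\notin B$ has at least one element of $B$ below it and at least two above it; avoidance therefore forces every hole strictly inside the span of $B$ to have at most one element of $B$ above it, i.e., all elements of $B$ \emph{except possibly the maximum} form a consecutive run starting at $\min B$. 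The correct normal form is $\{a,a+1,\ldots,a+\ell,b\}$ with $b\geq a+\ell+1$: the run sits at the bottom and the possibly detached element is the top. Your version (``a single small element followed by a run of consecutive integers at the top'') describes avoidance of the complementary pattern $124/3$ instead; for instance $\{1,5,6,7\}$ fits your form but contains $134/2$ via the hole $c=2$, whereas $\{1,2,3,7\}$ is what actually occurs. Since the minimum, the maximum, and the quantity $|B|-2$ are all insensitive to which end carries the run, your encoding survives this correction and then agrees with the paper's map exactly.

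One further remark: the ``main obstacle'' you identify --- that inflating blocks by consecutive runs might create occurrences of $134/2$ \emph{across} blocks --- is not actually an issue. An occurrence of $134/2$ is witnessed by a single block $B$ together with a hole $c\notin B$, so whether $\pi$ avoids $134/2$ depends only on each block separately, viewed as a subset of $[n]$ (this is the observation surrounding (\ref{eq:blocks contain}) in Section~\ref{sec:wilf}). In the inverse direction one only needs to check that each reconstructed block individually has the normal form above, which is immediate from the construction; no cross-block interaction needs to be controlled, and the real content of the lemma is just the normal form itself.
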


Deferring the proof of this lemma to the end of this section, we continue with our enumeration.

As it is well known that $|C_{n-f-2k,k}| = {n-f-k-1\choose k-1}$
and $|M_{k,f}| = {2k+f\choose f}(2k)!!$
it now follows from Lemma~\ref{lem:bijection134_2}, that
\begin{align*}
	|\Pi_n(134/2)| &= 1+ \sum_{k=1}^{\lfloor n/2 \rfloor}\sum_{f=0}^{n-2k} |\Pi_{n,k,f}(134/2)|\\
	&= 1+ \sum_{k=1}^{\lfloor n/2 \rfloor}\sum_{f=0}^{n-2k} {n-f-k-1\choose k-1}{2k+f\choose f}(2k)!!. \\
\end{align*}

We record this result in our last theorem. 

\begin{theorem}
	We have the following formula
	$$|\Pi_n(134/2)| = 1+ \sum_{k=1}^{\lfloor n/2 \rfloor}\sum_{f=0}^{n-2k} {n-f-k-1\choose k-1}{2k+f\choose f}(2k)!!.$$
\end{theorem}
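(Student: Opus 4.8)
The plan is to first pin down the shape of an individual block in a $134/2$-avoiding partition, and then read off the bijection from that shape. I would begin by observing that an occurrence of $134/2$ consists of three elements $a<c<d$ lying in a common block $B$ together with a single element $b$ with $a<b<c$ lying outside $B$; hence $\pi$ avoids $134/2$ if and only if, for every block $B$ with $|B|=a\ge 3$ and $B=\{z_1<\cdots<z_a\}$, no integer strictly between $z_1$ and $z_{a-1}$ lies outside $B$. Since the only elements of $B$ in the interval $[z_1,z_{a-1}]$ are $z_1,\dots,z_{a-1}$ themselves, this says precisely that $z_1,\dots,z_{a-1}$ are consecutive integers, while the top element $z_a>z_{a-1}$ is unconstrained. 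Thus every non-singleton block of an avoiding partition has the form $\{z,z+1,\dots,z+(a-2)\}\cup\{z_a\}$ with $z_a>z+a-2$ (a plain doubleton when $a=2$): an initial run of consecutive values starting at $\min B$ together with one extra ``top'' element $\max B$.

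With this description in hand, I would define $\phi$ by separating each non-singleton block into its \emph{skeleton} $\{\min B,\max B\}$ and its $|B|-2$ interior elements. Listing the $k$ non-singleton blocks $B_{(1)},\dots,B_{(k)}$ in order of increasing minimum, the map sends $\pi$ to the pair $(\mu,c)$, where $c=(|B_{(1)}|-2,\dots,|B_{(k)}|-2)$ and $\mu$ is the standardization of the partition obtained by deleting all interior elements (so $\mu$ has $k$ doubletons and $f$ fixed points). The target memberships are immediate: the interior elements number $\sum_i(|B_{(i)}|-2)=(n-f)-2k$, so $c\in C_{n-f-2k,k}$, and the skeleton has $2k+f$ elements, so $\mu\in M_{k,f}$.

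For the inverse I would reverse this surgery. Given $(\mu,c)$, order the $k$ edges of $\mu$ by minimum as $e_1,\dots,e_k$, and into the linear sequence $1<2<\cdots<2k+f$ insert, immediately after $\min(e_i)$, a contiguous block of $c_i$ new symbols; relabelling the resulting $2k+f+\sum c_i=n$ symbols by position and assigning each new symbol to the block of the edge-minimum it follows produces a partition of $[n]$. Because the inserted symbols sit immediately after $\min(e_i)$ with nothing between, the $i$th non-singleton block becomes an initial run $\{m_i,m_i+1,\dots,m_i+c_i\}$ together with the relabelled $\max(e_i)$; by the structural characterization this partition avoids $134/2$, and it clearly has $k$ non-singleton blocks and $f$ singletons, hence lies in $\Pi_{n,k,f}(134/2)$.

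The substance of the argument---and the step I expect to require the most care---is checking that these two constructions are mutually inverse, i.e.\ that collapsing and re-inflating recover the original partition. The point is that deleting the interior elements preserves the relative order of all surviving elements (minima, maxima, and singletons), so the positions at which interiors must be re-inserted are exactly the images of the original minima, and re-inserting the recorded multiplicities $c_i$ contiguously after each minimum restores both the values and the block assignments; running the composite the other way is symmetric. Once this bookkeeping is verified, $\phi$ is a bijection, and the claimed enumeration follows by multiplying $|C_{n-f-2k,k}|$ and $|M_{k,f}|$ and summing over $k\ge 1$ and $f$, with the all-singleton partition accounting for the additive $1$.
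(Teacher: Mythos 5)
Your proposal is correct and follows essentially the same route as the paper: the same structural characterization of non-singleton blocks (an initial run of consecutive values starting at the minimum, plus one larger top element), and the same bijection sending a partition to the weak composition of interior-element counts together with the standardized min/max skeleton, a partial matching in $M_{k,f}$. The only difference is that you spell out the inverse (re-inserting the interior runs after each edge minimum), which the paper declares ``easily seen to be bijective'' and leaves to the reader.
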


We now turn our attention to the proof of Lemma~\ref{lem:bijection134_2}.  We begin with a simple characterization of $134/2$-avoiding set partitions.  Its straightforward proof is omitted.

\begin{lemma}\label{lem:characterization134_2}
	For any set partition $\pi = B_1/\cdots /B_m$, we have that $\pi$ is $134/2$-avoiding if and only if any non-singleton block is of the form
	$$\{a, a+1,\ldots, a+\ell, b\},$$
	where $\ell\geq 0$ and $b\geq a+\ell+1$. 
\end{lemma}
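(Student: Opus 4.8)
The plan is to prove this characterization by establishing both directions of the equivalence, starting from the basic meaning of $134/2$-containment for a block. First I would unpack what it means for a set partition to contain the pattern $134/2$. Using the block-level reformulation developed earlier in the paper (see~(\ref{eq:blocks contain}) for the analogous patterns $\beta_{k,a}$), the pattern $134/2 = \beta_{4,2}$ is contained in $\pi$ exactly when some block $B$ has an element $c\notin B$ with at least one element of $B$ below $c$ and at least two elements of $B$ above $c$. So I would first translate the avoidance condition into: for every block $B$ and every $c\notin B$, it is \emph{not} the case that $B$ has an element below $c$ together with two elements above $c$.

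Next I would prove the ``if'' direction. Suppose every non-singleton block has the stated form $\{a,a+1,\ldots,a+\ell,b\}$ with $b\ge a+\ell+1$. I want to show no such block can contain $134/2$. Take any $c\notin B$. If $c<a$ then no element of $B$ lies below $c$, so the first requirement in~(\ref{eq:blocks contain}) fails. If $a\le c$, then since $a,\ldots,a+\ell$ are consecutive and $c\notin B$, either $c>a+\ell$ (in which case at most one element of $B$, namely $b$, can exceed $c$, so the ``two above'' requirement fails) or $c$ falls in a gap, which by the consecutiveness of $a,\ldots,a+\ell$ forces $c>a+\ell$, reducing to the previous case. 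Either way $B$ avoids $134/2$. Singleton blocks trivially avoid the pattern, so $\pi$ is $134/2$-avoiding.

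For the ``only if'' direction I would argue the contrapositive: if some non-singleton block $B$ is \emph{not} of the prescribed form, then $\pi$ contains $134/2$. Write the elements of $B$ in increasing order and let $a=\min B$. The prescribed form says precisely that the elements of $B$ consist of a maximal run of consecutive integers $a,a+1,\ldots,a+\ell$ starting at the minimum, together with exactly one larger element $b$. So failure of the form means either there is a ``gap'' among the smaller elements (an integer $c$ with $a<c$, $c\notin B$, but with at least two elements of $B$ exceeding $c$) or $B$ has at least two elements strictly above the initial consecutive run. In the first situation $c$ witnesses an occurrence of $134/2$ directly via~(\ref{eq:blocks contain}). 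In the second situation, if $a+\ell$ is the top of the initial run and there are two elements $b_1<b_2$ of $B$ with $b_1>a+\ell+1$ (a gap before $b_1$) or two elements above the run with a missing integer between the run and them, one locates an integer $c\notin B$ with one element of $B$ below and two above; the remaining care is to check that when $B=\{a,\ldots,a+\ell,b\}$ with $b=a+\ell+1$ we really do have the allowed form, so no false positive arises.

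The main obstacle will be the bookkeeping in the ``only if'' direction: precisely enumerating the ways a non-singleton block can deviate from the canonical form and exhibiting, in each case, an explicit separating value $c\notin B$ that certifies an occurrence of $134/2$ through~(\ref{eq:blocks contain}). The cleanest route is probably to observe that a finite set $B$ avoids $\beta_{4,2}$ if and only if whenever $x\in B$ has an element of $B$ below it and two elements of $B$ at or above it, then $x+1\in B$ as well — this is exactly the general avoidance criterion stated in the proof of Lemma~\ref{lem:phi well defined}. Phrasing things this way collapses the case analysis: it forces all but at most the single largest element of $B$ to form an initial interval $a,a+1,\ldots,a+\ell$, with the one remaining element $b$ free to be any value $\ge a+\ell+1$, which is exactly the claimed form. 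Since the paper explicitly labels this proof as omitted, I would keep the write-up short, invoking that avoidance criterion rather than grinding through the gap-by-gap analysis.
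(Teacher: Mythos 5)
The paper explicitly omits the proof of this lemma as straightforward, so there is no official argument to compare against; your direct case analysis in the second and third paragraphs is correct and is surely what the authors had in mind. A non-singleton block $B=\{x_1<\cdots<x_p\}$ contains $134/2$ exactly when some $c\notin B$ has at least one element of $B$ below it and at least two above it, and such a $c$ exists precisely when $x_{i+1}>x_i+1$ for some $i\le p-2$ (take $c=x_i+1$); so $B$ avoids $134/2$ iff $x_1,\ldots,x_{p-1}$ are consecutive, which is the claimed form with $b=x_p$.

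The ``cleanest route'' you propose to actually write down, however, contains an off-by-one error that you should not paper over. The criterion you quote --- $B$ avoids $\beta_{4,2}$ iff every $x\in B$ with an element of $B$ below it and two elements of $B$ \emph{at or above} it has $x+1\in B$ --- is false. Take $B=\{1,2,5\}$: the element $x=2$ has $1$ below it and $\{2,5\}$ at or above it, yet $3\notin B$, so this criterion declares that $B$ contains $134/2$; but $B$ has the allowed form ($a=1$, $\ell=1$, $b=5$) and genuinely avoids the pattern, since an occurrence would need an integer strictly between $1$ and $2$. The correct specialization must count elements \emph{strictly above} $x$: $B$ avoids $134/2$ iff every $x\in B$ with at least two elements of $B$ strictly greater than $x$ has $x+1\in B$, and it is this version that forces $x_1,\ldots,x_{p-1}$ into an interval. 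As you stated it, the criterion instead says every non-minimal, non-maximal element has its successor in $B$, which is the avoidance criterion for the complementary pattern $124/3$ and characterizes blocks of the form $\{a\}\cup\{b,b+1,\ldots\}$ --- witness $\{1,4,5\}$, which your criterion passes but which contains $134/2$ via $1<2<4<5$. So your concluding sentence, that the criterion ``forces all but at most the single largest element to form an initial interval,'' does not follow from the criterion as written. (The same $\leq$ versus $<$ slip appears in the ``general observation'' inside the paper's proof of Lemma~\ref{lem:phi well defined}, which is presumably where it crept in.) Repair the inequality and your short write-up goes through; otherwise fall back on your gap-by-gap argument, which is already complete.
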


\begin{proof}[Proof of Lemma~\ref{lem:bijection134_2}]
Consider a set partition $\pi=B_1/\ldots/B_m\in \Pi_n(134/2)$ so that $f$ of the blocks are singletons and the remaining $k= m-f$ blocks  $B_{i_1},\ldots, B_{i_k}$ are not singletons.  Now let
$$\lambda= \left(|B_{i_1}|-2,|B_{i_2}|-2,\ldots, |B_{i_k}|-2\right)$$
be the resulting weak composition of $n-f-2k$ with $k$ parts.  (As each of the blocks $B_{i_j}$ are of the form in Lemma~\ref{lem:characterization134_2}, the parts of our composition are just their corresponding values for $\ell$ in this decomposition.) Furthermore, by throwing out all but the min and max of each block, and applying standardization map, we obtain a  set partition $\sigma$  of $[f+2k]$ with exactly $f$ singletons and $k$ doubletons.  Finally, we define $\phi(\pi)=(\lambda,\sigma)$.  (We illustrate this construction in Example~\ref{ex:phi}.) As this map is easily seen to be bijective, the proof is complete.  
\end{proof}

\begin{example}\label{ex:phi}
Consider the set partition  $127/3/4568/9/10\ 11\ 12$ where $f=2$ and $k = 3$.  Then
	$$\phi(127/3/4568/9/10\ 11\ 12) = (\lambda, \sigma)$$
	where $\lambda = (1,2,1) \in C_{4,3}$ and $\sigma = 14/2/35/6/78\in M_{3,2}$.  
\end{example}

\acknowledgements
\label{sec:ack}
The authors are grateful to Bruce Sagan for recommending this area of research.

\bibliographystyle{abbrvnat}
\bibliography{mybib}

\begin{thebibliography}{10}
\providecommand{\natexlab}[1]{#1}
\providecommand{\url}[1]{\texttt{#1}}
\expandafter\ifx\csname urlstyle\endcsname\relax
  \providecommand{\doi}[1]{doi: #1}\else
  \providecommand{\doi}{doi: \begingroup \urlstyle{rm}\Url}\fi

\bibitem[{Albert} and {Bouvel}(2014)]{AlbertBouvel:A-genera2014}
M.~{Albert} and M.~{Bouvel}.
\newblock {A general theory of Wilf-equivalence for Catalan structures}.
\newblock \emph{ArXiv e-prints}, July 2014.

\bibitem[Bloom and Elizalde(2013)]{BloomElizalde:Pattern-2013}
J.~Bloom and S.~Elizalde.
\newblock Pattern avoidance in matchings and set partitions.
\newblock \emph{Electron. J Combin.}, 20, 2013.

\bibitem[Chen et~al.(2007)Chen, Deng, Du, Stanley, and
  Yan]{ChenDeng:Crossing07}
W.~Y.~C. Chen, E.~Y.~P. Deng, R.~R.~X. Du, R.~P. Stanley, and C.~H. Yan.
\newblock Crossings and nestings of matchings and partitions.
\newblock \emph{Trans. Amer. Math. Soc.}, 359\penalty0 (4):\penalty0
  1555--1575, 2007.
\newblock ISSN 0002-9947.
\newblock \doi{10.1090/S0002-9947-06-04210-3}.
\newblock URL \url{http://dx.doi.org/10.1090/S0002-9947-06-04210-3}.

\bibitem[Dahlberg et~al.(2016)Dahlberg, Dorward, Gerhard, Grubb, Purcell,
  Reppuhn, and Sagan]{Dahlberg:Set-p2016}
S.~Dahlberg, R.~Dorward, J.~Gerhard, T.~Grubb, C.~Purcell, L.~Reppuhn, and
  B.~E. Sagan.
\newblock Set partition patterns and statistics.
\newblock \emph{Discrete Math.}, 339\penalty0 (1):\penalty0 1--16, 2016.
\newblock ISSN 0012-365X.
\newblock \doi{10.1016/j.disc.2015.07.001}.
\newblock URL \url{http://dx.doi.org/10.1016/j.disc.2015.07.001}.

\bibitem[Kitaev et~al.(2009)Kitaev, Liese, Remmel, and
  Sagan]{KitaevLieseRemmelSagan:Rational2009}
S.~Kitaev, J.~Liese, J.~Remmel, and B.~E. Sagan.
\newblock Rationality, irrationality, and {W}ilf equivalence in generalized
  factor order.
\newblock \emph{Electron. J. Combin.}, 16\penalty0 (2, Special volume in honor
  of Anders Bjorner):\penalty0 Research Paper 22, 26, 2009.
\newblock ISSN 1077-8926.
\newblock URL
  \url{http://www.combinatorics.org/Volume_16/Abstracts/v16i2r22.html}.

\bibitem[Klazar(2000)]{Klazar:Count2000}
M.~Klazar.
\newblock Counting pattern-free set partitions. {I}. {A} generalization of
  {S}tirling numbers of the second kind.
\newblock \emph{European J. Combin.}, 21\penalty0 (3):\penalty0 367--378, 2000.
\newblock ISSN 0195-6698.
\newblock \doi{10.1006/eujc.1999.0353}.
\newblock URL \url{http://dx.doi.org/10.1006/eujc.1999.0353}.

\bibitem[Mansour(2013)]{Mansour:Combi2013}
T.~Mansour.
\newblock \emph{Combinatorics of set partitions}.
\newblock Discrete Mathematics and its Applications (Boca Raton). CRC Press,
  Boca Raton, FL, 2013.
\newblock ISBN 978-1-4398-6333-6.

\bibitem[Riordan(1975)]{RiordanThe-dist1975}
J.~Riordan.
\newblock The distribution of crossings of chords joining pairs of {$2n$}
  points on a circle.
\newblock \emph{Math. Comp.}, 29:\penalty0 215--222, 1975.
\newblock ISSN 0025-5718.
\newblock Collection of articles dedicated to Derrick Henry Lehmer on the
  occasion of his seventieth birthday.

\bibitem[Sagan(2010)]{Sagan:2010aa}
B.~E. Sagan.
\newblock Pattern avoidance in set partitions.
\newblock \emph{Ars Combin.}, 94:\penalty0 79--96, 2010.
\newblock ISSN 0381-7032.

\bibitem[Touchard(1952)]{TouchardSur-un-p1952}
J.~Touchard.
\newblock Sur un probl{\`e}me de configurations et sur les fractions continues.
\newblock \emph{Canadian J. Math.}, 4:\penalty0 2--25, 1952.
\newblock ISSN 0008-414X.

\end{thebibliography}
\label{sec:biblio}

\end{document}